\title{On the affinization of a nilpotent orbit cover}
\begin{document}
	
	\begin{abstract}
		Let $\mathfrak{g}$ be a simple classical Lie algebra over $\mathbb{C}$ and $G$ be the adjoint group. Consider a nilpotent element $e\in \mathfrak{g}$, and the adjoint orbit $\mathbb{O}=Ge$. The formal slices to the codimension $2$ orbits in the closure  $\overline{\mathbb{O}}\subset \mathfrak{g}$ are well-known due to the work of Kraft and Procesi \cite{Kraft-Procesi}. In this paper we prove a similar result for the universal $G$-equivariant cover $\widetilde{\mathbb{O}}$ of $\mathbb{O}$. Namely, we describe the dimension $2$ singularities for its affinization $\Spec(\mathbb{C}[\widetilde{\mathbb{O}}])$. 
	\end{abstract}

	\maketitle
	
	\section{Introduction}
	Let $\fg$ be a classical simple Lie algebra, and $G$ be the corresponding adjoint group. We set $\OO\subset \fg$ to be a nilpotent orbit and $\widetilde{\OO}$ to be its universal $G$-equivariant cover. Moreover, let $\OO'\subset \overline{\OO}$ be a codimension $2$ orbit, and $\widetilde{\OO}'\subset \Spec(\CC[\widetilde{\OO}])$ be the preimage of $\OO'$ under the composition $\Spec(\CC[\widetilde{\OO}])\to \Spec(\CC[\OO])\to \overline{\OO}$. Here the first map is induced from the covering $\widetilde{\OO}\to \OO$, and the second is the normalization map. As we show, in many cases $\widetilde{\OO}'$ is connected. We note that if $\widetilde{\OO}'$ has several connected components, the formal slices to each of the connected components are isomorphic.
	
	Kraft and Procesi in \cite{Kraft-Procesi} described formal slices to $\OO'\subset \overline{\OO}$, we recall this result in \cref{KP}. The main goal of this paper is to provide two different modifications of this famous result. Inspired by results of \cite{fuetal2015}, in \cref{theorem_symmetry} we complete the description of the formal slice by adding a symmetry acting on the slice. The second and main goal of this paper is to describe the formal slice to (a connected component of) $\widetilde{\OO}'$ in $\Spec(\CC[\widetilde{\OO}])$. Note that if $\fg$ is of type A, then $\widetilde{\OO}=\OO$, and the description of formal slices is well-known. Therefore, we assume from now on that $\fg$ is classical of type B, C or D. We remark that our description includes the symmetry acting on the slice. As an important intermediate step we obtain explicit Lie-theoretic description of partial Namikawa spaces for nilpotent orbit covers. The key idea carried out in \cref{sect: partial} is to identify the partial Namikawa space corresponding to a codimension $2$ orbit $\OO'\subset \overline{\OO}$ with a center of a certain Levi subalgebra of $G$. Thus, we show that this Levi carries all essential information about the slice to $\widetilde{\OO}'$ in $\Spec(\CC[\widetilde{\OO}])$. In \cite[Section 7.5]{LMBM} this idea gave rise to the notion of an adapted Levi subalgebra. One can view \cref{sect: partial} as an explicit computation of adapted Levi subalgebras in classical types.
	
	The plan of the paper is as follows. 
	
	In \cref{nilpotent} we recall classical facts about nilpotent orbits and state the two main results: \cref{theorem_symmetry} and \cref{theorem}. After that we generalize the notion of Lusztig-Spatenstein induction to covers of the nilpotent orbits. In \cref{sp sing} we recall the description of filtered Poisson deformations of symplectic singularities from \cite{Namikawa}. Namely, filtered Poisson deformations of a symplectic singularity $X$ are classified by the points of $\fP/W$, where $\fP$ is an affine space, and $W$ is a finite group acting on $\fP$. The Namikawa space $\fP$ can be recovered from the smooth locus $X^{reg}$ and the types of singularities of codimension $2$ symplectic leaves in $X$. The main idea of our paper is to compute the space $\fP$ for $\Spec(\CC[\widetilde{\OO}])$, and deduce the codimension $2$ singularities from it. By results of \cite{Losev4}, the space $\fP$ for the conical symplectic singularity $\Spec(\CC[\OO])$ can also be described using the Lusztig-Spaltenstein induction. In \cref{quant} we generalize these results to the covers of nilpotent orbits. In \cref{sect: partial} we give an explicit Lie-theoretic description of all partial Namikawa spaces for an affinization of a nilpotent orbit cover and deduce \cref{theorem_symmetry}. 
%	 Moreover, we establish a connection between partial Namikawa spaces $\fP_i$ and the partition corresponding to the orbit $\OO$ and generalize this connection to the covers. 
	In \cref{affin} we finish the computation of partial Namikawa spaces for $\Spec(\CC[\widetilde{\OO}])$ and deduce the singularity of $\widetilde{\OO}'$ in $\Spec(\CC[\widetilde{\OO}])$, thus proving \cref{theorem}. Lastly, in \cref{sect: duality} we discuss how results of this paper fit into a conjectural symplectic duality between slices between nilpotent orbits.
	
	\subsection{Acknowledgements}
		I am very grateful to Ivan Losev for suggesting this problem, a lot of fruitful discussions and for numerous remarks that helped me to improve exposition. The first version of a paper contained a mistake in \cref{compute p} which impacted the main result. I want to thank Shilin Yu for pointing this mistake to me, for many fruitful discussions on the subject, and for motivating me to rewrite this paper. 

        Section \cref{sect: duality} and the description of symmetries acting on the slices are inspired by discussions with Amihay Hanany, Deshuo Liu and Jiakang Bao to whom I am wholeheartedly thankful.

	\section{Nilpotent orbits in classical simple Lie algebras}\label{nilpotent}
	\subsection{Partitions corresponding to nilpotent orbits}\label{part}
	
	Let $\fg$ be a classical simple Lie algebra of type B, C or D. We have the following combinatorial description of nilpotent orbits in $\fg$.% (and therefore in $\fg^*$).
	
	\begin{prop}\label{orbits=partitions}\cite[Theorem 5.1.6]{CM}	
		(1) The nilpotent $O_n$-orbits in $\fs\fo_{n}$ are in one-to-one correspondence with partitions of $n$ in which every even part occurs with even multiplicity.
		
		(2) The nilpotent $Sp_{2n}$-orbits in $\fs\fp_{2n}$ are in one-to-one correspondence with partitions of $2n$ in which every odd part occurs with even multiplicity.
	\end{prop}
	
	If $n$ is odd, then $O_n=\ZZ/2\ZZ\times SO_n=\ZZ/2\ZZ\times \Ad(\fs\fo_n)$, so every $O_n$ orbit is a single $\Ad(\fs\fo_n)$ orbit. Suppose that $n$ is even, let $\alpha=(\alpha_1, \ldots, \alpha_n)$ be a partition of $n$ and let $\OO_{\alpha}$ be the corresponding $O_n$ orbit. The partition $\alpha$ is called {\itshape very even} if $\alpha$ satisfies the parity condition (1), and every element of $\alpha$ is even. For a very even $\alpha$ the orbit $\OO_{\alpha}$ is the union of two $\Ad(\fs\fo_n)$-orbits $\OO_{\alpha}^I$ and $\OO_{\alpha}^{II}$. If $\alpha$ is not very even, the corresponding orbit $\OO_{\alpha}$ is a single $\Ad(\fs\fo_n)$-orbit. 
	
	%		We are interested in the fundamental group of a coadjoint nilpotent orbit corresponding to the partition $\alpha$. Put $a$ to be the number of distinct odd $\alpha_i$, $b$ to be the number of distinct even non-zero $\alpha_i$, $c=\gcd(\alpha_1,\ldots, \alpha_n)$. We say that $\alpha$ is rather odd if all its odd parts have multiplicity $1$. We have the following description of $\pi_1(\OO)$. 
	%		\begin{prop}\label{pi_1}\cite[Corollary 6.1.6]{CM}
	%			
	%			(1) If $\fg=\fs\fl_n$ then $\pi_1(\OO)=\ZZ/c\ZZ$;
	%			
	%			(2) If $\fg=\fs\fo_n$, $n>1$ and $\alpha$ is rather odd then $\pi_1(\OO)$ is the central extension of $(\ZZ/2\ZZ)^{\max(0,a-1)}$ by $\ZZ/2\ZZ$;
	%			
	%			(3) If $\fg=\fs\fo_n$ and $\alpha$ is not rather odd then $\pi_1(\OO)=(\ZZ/2\ZZ)^{\max(0,a-1)}$;
	%			
	%			(4) If $\fg=\fs\fp_n$ then $\pi_1(\OO)=(\ZZ/2\ZZ)^{b}$.
	%		\end{prop}
	In the setting of this paper we work with $G$-equivariant covers of $\OO$, where $G$ is the corresponding adjoint group. For $e\in \OO$ let $(e,h,f)$ be an $\fs\fl_2$-triple in $\fg$. We set $Q=Z_G(e,h,f)$, and set $\pi_1^G(\OO)=Z_G(e,h,f)/Z_G(e,h,f)^\circ$ to be the $G$-equivariant fundamental group of $\OO$.  
	
	Let $\alpha=(\alpha_1, \alpha_2,\ldots,\alpha_n)$ be the partition corresponding to the orbit $\OO\subset \fg$. Put $a$ to be the number of distinct odd $\alpha_i$, and $b$ to be the number of distinct even non-zero $\alpha_i$. We have the following proposition.
	\begin{prop}\label{pi_1}\cite[Corollary 6.1.6]{CM}
				
		(1) If $\fg=\fs\fo_{2n+1}$, then $\pi_1^G(\OO)=(\ZZ/2\ZZ)^{a-1}$;
		
		%			(3) If $\fg=\fs\fo_n$ and $\alpha$ is not rather odd then $\pi_1(\OO)=(\ZZ/2\ZZ)^{\max(0,a-1)}$;
		%			
		(2) If $\fg=\fs\fp_n$, then $\pi_1^G(\OO)=(\ZZ/2\ZZ)^{b}$ if all even parts have even multiplicity; otherwise $\pi_1^G(\OO)=(\ZZ/2\ZZ)^{b-1}$;
		
		(3) If $\fg=\fs\fo_{2n}$, then $\pi_1^G(\OO)=(\ZZ/2\ZZ)^{\max(0,a-1)}$ if all odd parts have even multiplicity; otherwise $\pi_1^G(\OO)=(\ZZ/2\ZZ)^{\max(0,a-2)}$.
	\end{prop}
	
%	\begin{proof}
%		We will prove this proposition, because we will use a similar argument later. Let $e\in \OO$ be a nilpotent element. We include it into an $\fs\fl_2$ triple $(e,h,f)$ and set $Q=Z_G(e,h,f)$. Let $Q^\circ$ be the connected component of $\QQ$ and $\Gamma=Q/Q^\circ$ be the component group. Let $\fq$ be the Lie algebra associated to $\QQ$, and set $\fz$ to be the center of $\fq$. Then $H^2(\OO, \CC)=(\fz^*)^Q$. The connected group $Q^\circ$ acts trivially on $\fz^*$, so $H^2(\OO, \CC)=(\fz^*)^\Gamma$.
%		
%		Let us restrict ourself to $\fg=\fs\fp_n$. For the other case we refer to \cite{BISWAS}. For the partition $\alpha$ we set $r_i=|\{j| \alpha_j=i\}|$ and $|s_i=\{j|\alpha_j\ge i\}|$. 
%		
%		By \cite[Theorem 6.1.3]{CM} $Q=\prod_{i\mbox{ odd}} (Sp_{r_i})_\Delta^i\times \prod_{i\mbox{ even}} (O_{r_i})_\Delta^i$. 
%		Therefore $\fq=\bigoplus_{i \mbox{ odd}} \fs\fp_{r_i}\oplus \bigoplus_{i \mbox{ even}} \fs\fo_{r_i}$, and $\fz=\bigoplus_{i \mbox{ even, }r_i=2} \fs\fo_{2}$. For every such $i$ the action of $\ZZ/2\ZZ=O_{r_i}/SO_{r_i}\subset \Gamma$ on the corresponding direct summand $\fs\fo_2$ is non-trivial, and therefore $(\fz^*)^\Gamma=0$. 
%	\end{proof}
	
	\subsection{Singularities in codimension $2$}
	
	In this section we recall the classification of codimension $2$ singularities in $\overline{\OO}$ from \cite{Kraft-Procesi}. 
	
	%Identify $\fg$ and $\fg^*$ using the Killing form. Then $\OO$ is a symplectic variety, 	and $\overline{\OO}\subset \fg$ has a structure of a Poisson scheme. We are interested in codimension $2$ symplectic leaves in $\overline{\OO}$, i.e. 
	We are interested in nilpotent orbits $\OO'\subset \overline{\OO}$ of codimension $2$. Let $\alpha$, $\beta$ be the partitions corresponding to $\OO$ and $\OO'$ respectively. Suppose that the first $r$ rows and $s$ columns of $\alpha$ and $\beta$ coincide. Let ${\alpha'}$, ${\beta'}$ be the partitions obtained by erasing these rows and columns. We call the pair $(\alpha',\beta')$ the minimal degeneration of $(\alpha,\beta)$.
	
	\begin{example}
		Consider $\fg=\fs\fp_{30}$ and set 
		
		$\alpha=$\ytableaushort{}
		* {10, 8,4,3,3,1,1}
		* [*(magenta)]{10, 4,4,3,3,1,1}, $\beta=$\ytableaushort{}
		* {10, 6,6,3,3,1,1}
		* [*(magenta)]{10, 4,4,3,3,1,1}
		
		%$\alpha$=\ydiagram{10, 8,4,3,3,1,1},         $\beta$=\ydiagram{10, 6,6,3,3,1,1}.
		
		\vspace{5mm}
		
		Then the minimal degeneration of $(\alpha,\beta)$ is $\alpha'=$\ydiagram{4},         $\beta'$=\ydiagram{2,2}.
	\end{example}
	
	\begin{theorem}\label{KP}\cite{Kraft-Procesi}
		Let $(\alpha, \beta)$ be a pair of partitions corresponding to the pair of orbits $(\OO, \OO')$, where $\OO'\subset \overline{\OO}$ is of codimension $2$. Then $(\alpha, \beta)$ has one of the following minimal degenerations.
		
		(a) $\alpha'=(2)$, $\beta'=(1,1)$. The singularity of $\OO'$ in $\overline{\OO}$ is the Kleinian singularity of type $A_1$.
		
		(b) $\alpha'=(2k)$, $\beta'=(2k-2,2)$ for some $k>1$. The singularity of $\OO'$ in $\overline{\OO}$ is the Kleinian singularity of type $D_{k+1}$.
		
		(c) $\alpha'=(2k+1)$, $\beta'=(2k-1,1,1)$ for some $k>0$. The singularity of $\OO'$ in $\overline{\OO}$ is the Kleinian singularity of type $A_{2k-1}$.
		
		(d) $\alpha'=(2k+1,2k+1)$, $\beta'=(2k,2k,2)$ for some $k>0$. The singularity of $\OO'$ in $\overline{\OO}$ is the Kleinian singularity of type $A_{2k-1}$.
		
		(e) $\alpha'=(2k,2k)$, $\beta'=(2k-1,2k-1,1,1)$ for some $k>0$. The singularity of $\OO'$ in $\overline{\OO}$ is the union of two Kleinian singularities of type $A_{2k-1}$ transversally meeting at $0$.
	\end{theorem}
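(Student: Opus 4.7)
The plan is to follow Kraft--Procesi: reduce to a short list of minimal pairs $(\alpha', \beta')$ and then compute the formal slice in each case by direct calculation. The first and main step is to show that the formal slice to $\OO'$ in $\overline{\OO}$ at a point $e' \in \OO'$ depends only on the minimal degeneration $(\alpha', \beta')$. Granting that, one enumerates the minimal pairs compatible with the codimension-$2$ condition and computes the Slodowy slice $S_{e'} \cap \overline{\OO}$ in each resulting case.

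For the reduction, suppose that $\alpha$ and $\beta$ share their first $r$ rows. Fix an $\fs\fl_2$-triple $(e', h', f')$ with $e' \in \OO'$, and decompose the defining representation $V$ under $(e', h', f')$. Cutting off the $r$ largest Jordan blocks gives a direct-sum splitting $V = W \oplus V_0$ preserved by the centralizer; the form on $V$ restricts to a non-degenerate form on $V_0$ of the same type, and the Slodowy slice $S_{e'} \cap \overline{\OO}$ is identified with the analogous slice at a nilpotent of type $(\alpha_{r+1}, \alpha_{r+2}, \ldots)$ inside the smaller classical Lie algebra attached to $V_0$. The common-column case is handled dually: passing to the transposed partitions corresponds geometrically to the Lusztig--Spaltenstein picture, and the shared columns contribute only a smooth factor to the transverse slice. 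Iterating these two reductions brings us to pairs $(\alpha', \beta')$ sharing no row and no column.

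Next, the codimension-$2$ condition $\dim \overline{\OO} - \dim \OO' = 2$, combined with the type B/C/D parity rules from \cref{orbits=partitions}, leaves only finitely many admissible minimal pairs. A direct combinatorial check on the allowed moves in the dominance order, using the standard dimension formula for nilpotent orbits in classical Lie algebras, yields exactly the five shapes listed as (a)--(e).

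Finally, in each of the five minimal cases both orbits sit inside a low-rank classical Lie algebra, so one computes $S_{e'} \cap \overline{\OO}$ explicitly and matches its defining equations with those of a Kleinian singularity. Cases (a)--(c) are short and amount to writing down the slice in a rank-one or rank-two situation. The main obstacle is cases (d) and especially (e): in (e) the slice is reducible with two components, and one must verify both that each branch is a Kleinian singularity of type $A_{2k-1}$ and that the two branches meet transversally at the origin. This is done by producing explicit generators for the ideal of $\overline{\OO} \cap S_{e'}$ in the slice and identifying them, on each branch, with the standard equations of $\CC^2/(\ZZ/2k\ZZ)$.
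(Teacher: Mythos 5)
This is the Kraft--Procesi theorem, which the paper quotes without proof; the only benchmark is the original argument in \cite{Kraft-Procesi}, and your outline does follow its broad strategy (cancel common rows and columns to reach a minimal degeneration, enumerate the codimension-$2$ minimal pairs, compute the slice in each case). The genuine gap is in the reduction step, which is the technical heart of the matter. The row cancellation cannot be obtained by the direct-sum argument you sketch: although the reductive centralizer of the $\fs\fl_2$-triple preserves the isotypic decomposition of the defining representation $V$, the points of $S_{e'}\cap\overline{\OO}$ near $e'$ do not preserve any splitting $V=W\oplus V_0$, so one cannot simply restrict to $V_0$; moreover, when $\beta_r=\beta_{r+1}$ there is no canonical choice of the $r$ largest Jordan blocks to cut off. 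Kraft and Procesi prove row and column cancellation as statements about \emph{smooth equivalence} of singularities, via their auxiliary varieties of pairs $(A,B)$ with $AB$ and $BA$ nilpotent, and this machinery occupies a large part of their paper. Your account of column cancellation is also inaccurate: erasing a common column does not merely contribute a smooth factor within the same Lie algebra --- each column cancellation exchanges the orthogonal and symplectic cases, and keeping track of this alternation of type is essential both for the reduction and for arriving at exactly the five minimal shapes; it is invisible in your sketch, and the appeal to Lusztig--Spaltenstein induction is not what is going on there.

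The remaining steps are announced rather than carried out. The enumeration of codimension-$2$ minimal degenerations from the parity conditions of \cref{orbits=partitions} and the dimension formula is a believable combinatorial exercise, but the explicit identification of the slices --- in particular the $D_{k+1}$ singularity in case (b) and the two transversally meeting $A_{2k-1}$ branches in case (e) --- is exactly where the content lies, and producing the generators of the ideal of $\overline{\OO}\cap S_{e'}$ in these cases is a nontrivial computation that the proposal defers. As a roadmap of the original proof the proposal is accurate in outline; as a proof it is not self-contained at the one step where the real idea (smooth equivalence under row/column cancellation, with the type flip) is required.
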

	\begin{example}
		In the example above we are in situation (b) with $k=2$. So the singularity of $\OO_{\beta}\subset \overline{\OO}_{\alpha}$ is the Kleinian singularity of type $A_{3}$.
	\end{example}
	\subsection{Kleinian singularities with symmetries}
        In this subsection we mostly follow the exposition of \cite[Section 1.4.2] {fuetal2015}. Let $\Gamma\subset SL_2 (\CC)$ be a finite subgroup, and $\Sigma=\CC^2/\Gamma$ be the corresponding Kleinian singularity. Let $\rho: \widetilde{\Sigma}\to \Sigma$ be the minimal resolution, and recall that the exceptional divisor $E$ of $\rho$ is a union of projective lines. The intersection graph of irreducible components of $E$ is the Dynkin diagram of a simply laced Dynkin diagram $\Delta$, which we call the Dynkin diagram of $\Gamma$. Any automorphism of $\Sigma$ fixes the point $0\in \Sigma$ and acts on the irreducible components of the exceptional divisor by permuting them, thus inducing a diagram automorphism of $\Delta$. Thus, we may consider the pairs $(\Sigma, K)$ of a Kleinian singularity $\Sigma$ and a subgroup $K$ of diagram automorphisms of $\Delta$. We say that $K$ is a symmetry acting on the singularity $\Sigma$.

        In \cite[III.6]{Slodowy} Slodowy introduced the following notations for the pairs $(\Sigma, K)$
        \begin{itemize}
            \item $B_k$ for $X$ of type $A_{2k-1}$ and $K=S_2$;
            \item $C_k$ for $X$ of type $D_{k+1}$ and $K=S_2$;
            \item $F_4$ for $X$ of type $E_6$ and $K=S_2$;
            \item $G_2$ for $X$ of type $D_4$ and $K=S_3$.
        \end{itemize}
        The notations are motivated by the following fact. If $\fg$ is a simple Lie algebra, then the slice to the subregular orbit in the nilpotent cone is a singularity of the same type as $\fg$. To simplify notations we formally set $B_0=A_1$.

        In \cite{fuetal2015} Fu, Juteau, Levy and Sommers described minimal singularities in nilpotent cones of exceptional Lie algebras and crucially described the symmetry acting on them. We can strengthen the results of \cref{KP} as follows.

        Let $(\alpha, \beta)$ be the partitions corresponding to $\OO$ and $\OO'$, and $(\alpha', \beta')$ be the minimal degeneration of $(\alpha, \beta)$. Let $q$ be the number of the row of $\alpha$ corresponding to the top-most row of $\alpha'$. We call such $q$ {\itshape the number of the singularity} of the pair $(\OO, \OO')$ and write it as $n(\OO, \OO')$. In the example above $q=2$.
        
        \begin{theorem}\label{theorem_symmetry}
           The singularity of $\OO'$ in $\overline{\OO}$ can be described as follows.
		
		(a) $\alpha'=(2)$, $\beta'=(1,1)$. The singularity of $\OO'$ in $\overline{\OO}$ is the Kleinian singularity of type $A_1$.
		
		(b) $\alpha'=(2k)$, $\beta'=(2k-2,2)$ for some $k>1$. If $\alpha_q$ and $\alpha_{q+1}$ are the only two odd members of $\alpha$ (automatically implying that $\fg=\fs\fo(2n)$), the singularity of $\OO'$ in $\overline{\OO}$ is the Kleinian singularity of type $D_{k+1}$. Otherwise, the singularity of $\OO'$ in $\overline{\OO}$ is of type $C_{k}$.
		
		(c) $\alpha'=(2k+1)$, $\beta'=(2k-1,1,1)$ for some $k>0$. The singularity of $\OO'$ in $\overline{\OO}$ is the singularity of type $B_{k}$.
		
		(d) $\alpha'=(2k+1,2k+1)$, $\beta'=(2k,2k,2)$ for some $k>0$. The singularity of $\OO'$ in $\overline{\OO}$ is the singularity of type $B_{k}$.
		
		(e) $\alpha'=(2k,2k)$, $\beta'=(2k-1,2k-1,1,1)$ for some $k>0$. The singularity of $\OO'$ in $\overline{\OO}$ is the union of two singularities of type $B_{k}$ transversally meeting at $0$.
        \end{theorem}
	\subsection{Main result}
	
	The main goal of this paper is to obtain an analogous result for codimension $2$ singularities of $\Spec(\CC[\widetilde{\OO}])$, where $\widetilde{\OO}$ is the universal $G$-equivariant cover of $\OO$ for $G$ being the adjoint group of $\fg$. The covering map $\widetilde{\OO}\to \OO$ extends to a $G$-equivariant map $\pi: \Spec(\CC[\widetilde{\OO}])\to \overline{\OO}$. We set $\widetilde{\OO}'=\pi^{-1}(\OO')$ and consider the singularity of $\widetilde{\OO}'$ in $\Spec(\CC[\widetilde{\OO}])$.
	
	    The following is the main result of this paper, describing the singularity of $\widetilde{\OO}'$ in $\Spec(\CC[\widetilde{\OO}])$ together with a symmetry acting on it. 
	
	\begin{theorem}\label{theorem}
		Let $\OO'\subset \overline{\OO}$ be a codimension $2$ orbit, and set $q=n(\OO,\OO')$. 
		
		(a) $\alpha'=(2)$, $\beta'=(1,1)$.  We have several cases. 
        \begin{itemize} 
            \item[(i)] If $\alpha_q\neq \alpha_{q-1}$ and $\alpha_{q+1}\neq \alpha_{q+2}$ are the only two even members of $\alpha$ (so that $\fg=\fs\fp(2n)$), $\widetilde{\OO}'$ is a disconnected $2$-fold cover of $\OO'$, and the singularity of each of the connected components of $\widetilde{\OO}'$ in $\Spec(\CC[\widetilde{\OO}])$ is the Kleinian singularity of type $A_{1}$.  
            \item[(ii)] If $\alpha_q$ and $\alpha_{q+1}$ are the only members of $\alpha$ with odd multiplicity, and not covered by (i), then $\widetilde{\OO}'$ is connected, and the singularity of $\widetilde{\OO}'$ in $\Spec(\CC[\widetilde{\OO}])$ is the Kleinian singularity of type $A_{1}$. 
            \item[(iii)] Otherwise, $\widetilde{\OO}'\subset \Spec(\CC[\widetilde{\OO}])^{reg}$.
        \end{itemize}
		(b) $\alpha'=(2k)$, $\beta'=(2k-2,2)$ for some $k>1$. We have several cases.
  \begin{itemize}
      \item[(i)] If $\alpha_q < \alpha_{q-1}$ and $\alpha_{q+1} > \alpha_{q+2}$ are the only two members of odd multiplicity in $\alpha$, then $\widetilde{\OO}'$ is connected, and the singularity of $\widetilde{\OO}'$ in $\Spec(\CC[\widetilde{\OO}])$ is the Kleinian singularity of type $D_{k+1}$.
      \item[(ii)] If $\alpha_q$ and $\alpha_{q+1}$ are the only members of $\alpha$ with odd multiplicity, but are not covered by (i), then $\widetilde{\OO}'$ is connected, and the singularity of $\widetilde{\OO}'$ in $\Spec(\CC[\widetilde{\OO}])$ is of type $C_{k}$.
      \item[(iii)] Otherwise, $\widetilde{\OO}'$ is connected, and the singularity of $\widetilde{\OO}'$ in $\Spec(\CC[\widetilde{\OO}])$ is of type $B_{k-1}$.
  \end{itemize}
		
		(c) $\alpha'=(2k+1)$, $\beta'=(2k-1,1,1)$ for some $k>0$. Then $\widetilde{\OO}'$ is connected, and the singularity of $\widetilde{\OO}'$ in $\Spec(\CC[\widetilde{\OO}])$ is of type $B_{k}$.
		
		(d) $\alpha'=(2k+1,2k+1)$, $\beta'=(2k,2k,2)$ for some $k>0$. Then $\widetilde{\OO}'$ is connected, and the singularity of $\widetilde{\OO}'$ in $\Spec(\CC[\widetilde{\OO}])$ is of type $B_{k}$.
		
		(e) $\alpha'=(2k,2k)$, $\beta'=(2k-1,2k-1,1,1)$ for some $k>0$. Then $\widetilde{\OO}'$ is connected, and the singularity of $\widetilde{\OO}'$ in $\Spec(\CC[\widetilde{\OO}])$ is of type $B_{k}$. 
	\end{theorem}	

	\begin{rmk}
		For the cases (a, ii) and (b, ii) we do not require the multiplicities of $\alpha_q$ and $\alpha_{q+1}$ to be equal $1$. For example, if $\OO$, $\OO'\subset \fs\fp_{22}$ are the orbits corresponding to the partitions $\alpha=(4,4,4,2,2,2,2,2)$ and $\beta=(4,4,3,3,2,2,2,2)$, then the singularity of $\widetilde{\OO}'$ in $\Spec(\CC[\widetilde{\OO}])$ is the Kleinian singularity of type $A_{1}$.
	\end{rmk}

    \begin{rmk}
		A similar phenomenon to (a, i) happens when $\alpha_q < \alpha_{q-1}$ and $\alpha_{q+1} > \alpha_{q+2}$ are the only two odd members of $\alpha$ (automatically  implying $\fg=\fs\fo(2n)$). In this case, there are two very even orbits corresponding to the partition $\beta$, that we denote by $\OO'^{I}$ and $\OO'^{II}$. Since $\pi_1(\OO)$ is trivial, $\widetilde{\OO}=\OO$, and the singularity to both $\OO'^{I}$ and $\OO'^{II}$ in $\Spec(\CC[\OO])$ is the Kleinian singularity of type $A_{1}$.
	\end{rmk}
	
	We have the following corollary of \cref{theorem}. It is, in particular, very useful in understanding almost \'etale maps and equivalences of covers introduced in \cite[Sections 5.4, 6.5]{LMBM}.
	\begin{cor}\label{etale}
		Let $\OO^{sreg}\subset \overline{\OO}$ be the union of $\OO$ and all codimension $2$ orbits $\OO'\subset \overline{\OO}$, such that one of the following two conditions holds.
		\begin{itemize}
			\item The pair of orbits $(\OO,\OO')$ is of types (c) or (d);
			\item $q=n(\OO,\OO')$ is such that $\alpha_q$ and $\alpha_{q+1}$ are the only members of $\alpha$ with odd multiplicity.
		\end{itemize}
		Then the map $\pi: \Spec(\CC[\widetilde{\OO}])\to \overline{\OO}$ is \'etale over $\OO^{sreg}$.  
	\end{cor}
	\subsection{Lusztig-Spaltenstein induction}\label{LS_sect}
	
	Let $\fl\subset \fg$ be a Levi subalgebra, and $\OO_L\subset \fl$ be a nilpotent orbit. We include $\fl$ into a parabolic subalgebra $\fp=\fl\ltimes \fn$, and set $P$ to be the corresponding parabolic subgroup of $G$. Following \cite{LS}, we consider the $G$-equivariant moment map $\mu: G\times^P(\overline{\OO}_L\times \fn)\to \fg$. The image contains a unique open $G$-orbit $\OO$. We say that $\OO$ is {\itshape induced} from $\OO_L$, and call $\mu$ the {\itshape generalized Springer morphism} for the pair ($\OO$, $\OO_L$). Moreover, if $\mu$ is birational then $\OO$ is called {\itshape birationally induced} from $\OO_L$ (see \cite{Losev4}). If the orbit $\OO$ cannot be birationally induced from a proper Levi subalgebra, we say that $\OO$ is {\itshape birationally rigid}.
	
	Let us explain the Lusztig-Spaltenstein induction on the level of partitions following \cite{CM}. Recall that $\fg$ is of type $B$, $C$ or $D$. We will use notation $\fg_n$ to denote the simple Lie algebra $\fs\fo_n$ or $\fs\fp_n$ respectively, and $G_n$ to denote the corresponding adjoint group. Denote by $\cP(n)$ the set of partitions appearing in \cref{orbits=partitions} for the Lie algebra $\fg_n$, and say that $l$ has the same parity as $\fg$, if $(-1)^l=\e_{\fg}$, where $\e_{\fg}=1$ for $\fg=\fs\fo_n$, and $\e_{\fg}=-1$ for $\fg=\fs\fp_n$. In other words, a partition $\alpha$ of $n$ corresponds to a nilpotent orbit (or two very even orbits in type $D$) in $\fg$ if and only if all parts of $\alpha$ of the same parity as $\fg$ occur with even multiplicity.
	% $the multiplicity of $l$ in any partition of $\fg_k$ is even. In other words, all even numbers have the same parity as $\fs\fo_k$, and all odd numbers have the same parity as $\fs\fp_k$. In formulas we will use a short notation $(-1)^k=\e_{\fg}$ for $k$ and $\fg$ of the same parity, and $(-1)^k=-\e_\fg$ for $k$ and $\fg$ of different parity. 
	
	%If $m$ is not the same parity as $\fg$, we say that it is $\e$-odd.
	Recall the following well-known fact. 
	\begin{lemma}\label{Levi}\cite[Lemma 3.8.1]{CM}
		Every Levi subalgebra $\fl \subset \fg_n$ is $G$-conjugate to a Levi subalgebra of the form $\fg\fl_{t_1}\times \fg\fl_{t_2}\times \ldots \times \fg\fl_{t_p}\times \fg_{n-2\sum_i t_i}$. Here we allow $n-2\sum_it_i$ is $0$ or $1$, which corresponds to the Levi $\fg\fl_{t_1}\times \fg\fl_{t_2}\times \ldots \times \fg\fl_{t_p}\subset \fg_n$.
	\end{lemma}
 
	By \cref{Levi}, any induction can be obtained as a sequence of inductions from the Levi subalgebras of form $\fl=\fg\fl_m\times \fg_{k-2m}\subset \fg_k$. Let us recall the combinatorial data associated with such induction. Let $\OO_L\subset \fg_{k-2m}$ be a nilpotent orbit. Formally $\OO_L$ is a nilpotent orbit in a simple Lie algebra $\fg_{k-2m}$, but to avoid clogging the formulas with the zero orbits we always identify it with the nilpotent orbit $\{0\}\times \OO_L\subset \fg\fl_m\times \fg_{n-2m}=\fl$. Let $\alpha\in \cP(k-2m)$ be the corresponding partition. Let $\OO\subset \fg$ be an orbit induced from the orbit $\OO_L\subset \fl$, and $\beta\in \cP(n)$ be the corresponding partition. Consider the partition $\alpha^m$ of $n$ defined by $\alpha^m_i=\alpha_i+2$ for $i\le m$ and $\alpha^m_i=\alpha_i$ for $i>m$. 
	\begin{prop}\label{LS}\cite[Theorem 7.3.3]{CM}
		There are two possible cases:
		
		1) $\alpha^m\in \cP(n)$. Then $\beta=\alpha^m$.
		
		2) $\alpha^m\notin \cP(n)$. Then $\beta=(\alpha_1+2, \ldots, \alpha_{m-1}+2, \alpha_{m}+1, \alpha_{m+1}+1, \alpha_{m+2}, \ldots, \alpha_n)$. We call $\beta$ the {\itshape collapse} of $\alpha^m$. Then $\alpha_m=\alpha_{m+1}$, and $\alpha_m$ has the same parity as $\fg$. Consequently, $\beta_m=\beta_{m+1}$, and $\beta_m$ has the opposite parity to $\fg$.
	\end{prop}
	
	\begin{example}
		Consider the orbit $\OO_L\subset \fs\fo_{11}$ corresponding to the partition $\alpha=(7,2,2)$. Let $\OO$ be the orbit in $\fs\fo_{15}$ induced from $\{0\}\times \OO_L\subset \fg\fl_2\times \fs\fo_{11}$. Then $\alpha^2=(9,4,2)\notin \cP(15)$, because even numbers $4$ and $2$ occur once. The orbit $\OO$ corresponds to the collapse $\beta=(9,3,3)$.
	\end{example}
	
	We can extend the Lusztig-Spaltenstein induction to the covers of orbits in the following way. Let $\OO_L\subset \fl$ be an adjoint orbit, $L\subset G$ be the Levi subgroup with Lie algebra $\fl$, and $\widehat{\OO}_L$ be an $L$-equivariant cover of $\OO_L$. Consider the $G$-equivariant moment map $\mu: G\times^P(\Spec(\CC[\widehat{\OO}_L])\times \fn)\to \fg$. The image contains a unique open $G$-orbit $\OO$ that is induced from $\OO_L$. Set $\widehat{\OO}=\mu^{-1}(\OO)$. Since $\mu$ is $G$-equivariant, $\widehat{\OO}$ is a $G$-equivariant cover of $\OO$. We say that $\widehat{\OO}$ is {\itshape birationally induced} from $\widehat{\OO}_L$. If the cover $\widehat{\OO}$ cannot be birationally induced from a proper Levi subalgebra, we say that $\widehat{\OO}$ is {\itshape birationally rigid}. 
	
%	\begin{rmk}\label{connected}
%		Suppose that $\widehat{\OO}_L$ is a connected $L$-equivariant cover of $\OO_L$, and $\widehat{\OO}$ be a cover birationally induced from $\widehat{\OO}_L$. Then $\widehat{\OO}$ is a connected $G$-equivariant cover.
%	\end{rmk}
%	\begin{proof}
%		By construction, $\widehat{\OO}$ is a $G$-space. Since $G$ is connected, $\widehat{\OO}$ is connected.
%	\end{proof}
%	
	
	\section{Generalities on symplectic singularities and their deformations}\label{sp sing}
	\subsection{Filtered Poisson deformations}\label{quantizations}
	Let $A$ be a finitely generated Poisson algebra equipped with a grading $A=\bigoplus_{i=0}^\infty A_i$ such that $A_0=\CC$, and the Poisson bracket has degree $-d$, where $d\in \ZZ_{>0}$. 
	
	Let $\cA$ be a filtered commutative algebra equipped with a Poisson bracket decreasing the filtration degree by $d$. Moreover, suppose that we have an isomorphism $\theta: \gr \cA\simeq A$ of Poisson algebras. Such pair $(\cA, \theta)$ is called a {\itshape{filtered Poisson deformation}} of $A$.
	%, flat over $\CC[[h]]$ and complete and separated in the $h$-adic topology. Assume that $\cA_h$ is equipped with a $\CC^\times$-action by $\CC$-algebra automorphisms that is rational on all quotients $\cA_h/(h^k)$ and satisfies $t.h=th$. Moreover, suppose that $[\cA_h, \cA_h]\subset h^d \cA_h$. All $\CC[[h]]$-algebras in this paper are assumed to satisfy all of these conditions. We have a skew-symmetric bracket on $\cA_h$ given by $\{a,b\}=\frac{[a,b]}{h^d}$ that induces a Poisson bracket on $\cA_h/(h)$. The action of the torus on $\cA_h$ induces a grading on the quotient $\cA_h/(h)$. Suppose that there is an isomorphism $\theta: \cA_h/(h)\simeq A$ of graded Poisson algebras. We say that $\cA_h$ together with an isomorphism $\theta$ is a graded formal quantization of $A$.  
	
	By a Poisson scheme we mean a scheme $X$ over $\Spec(\CC)$ whose structure sheaf $\cO_X$ is equipped with a Poisson bracket. For example, for any finitely generated Poisson algebra $A$ the affine scheme $X=\Spec(A)$ is a Poisson scheme. By a filtered Poisson deformation of $X$ we understand a sheaf $\calD$ of filtered Poisson algebras complete and separated with respect to the filtration together with an isomorphism of sheaves of Poisson algebras $\theta: \gr \calD\simeq \cO_X$. %In what follows we assume that $X$ is conical. We introduce the conical topology on $X$, where, by definition, a subset $U$ is open if and only if $U$ is Zariski open and $\CC^\times$-stable. Note that if $X$ is normal then every point admits a $\CC^\times$-stable affine open neighborhood. From now on we consider quantizations in the conical topology and demand the isomorphism $\theta:\calD/h\calD \simeq \cO_X$ to be a $\CC^\times$-invariant isomorphism of sheaves of Poisson algebras.  
	
	%A morphism of two formal quantizations ($\calD_1$, $\theta_1$) and ($\calD_2$, $\theta_2$) is a $\CC^\times$-equivariant morphism $\phi: \calD_1\to \calD_2$ of $\CC[[h]]$-algebras such that $\theta_1=\theta_2\circ\phi$.
	
	%We will need the following standard lemma.
	%\begin{lemma}\label{affinequant}
	%	Let $A$ be a Poisson algebra and $X=\Spec(A)$. The map $\calD\to \Gamma(X, \calD)$ gives a bijection between the set of filtered Poisson deformations of $X$ and the set of filtered Poisson deformations of $A$.
	%\end{lemma}
	
	\subsection{Symplectic singularities}
	
	Let $X$ be a normal Poisson algebraic variety such that the regular locus $X^{reg}$ is a symplectic variety. Let $\omega^{reg}$ be the symplectic form on $X^{reg}$. Suppose that there exists a projective resolution of singularities $\rho: \widehat{X}\to X$ such that $\rho^*(\omega^{reg})$ extends to a regular (not necessarily symplectic) form on $X$. Following Beauville \cite{Beauville2000}, we say that $X$ has symplectic singularities. Recall that we have the following example.
	\begin{example}\label{affin or orbit}\cite{Panyushev1991}
		Let $\fg$ be a semisimple Lie algebra and $\OO\subset \fg^*$ be a nilpotent orbit. Then $X=\Spec(\CC[\OO])$ has symplectic singularities.
	\end{example}
	
	We have the following generalization of \cref{affin or orbit}. The proof appeared earlier in \cite[Lemma 2.5]{LosevHC}).
	\begin{prop}\label{sympl_sing}	
		Let $\widehat{\OO}$ be a connected $G$-equivariant cover of $\OO$. The variety $\widehat{X}=\Spec(\CC[\widehat{\OO}])$ has symplectic singularities.
	\end{prop}
	
	We say that an affine Poisson variety $X$ is {\itshape conical} if $\CC[X]$ is endowed with a grading $\CC[X]=\bigoplus_{i=0}^\infty \CC[X]_i$ such that $\CC[X]_0=\CC$, and there exist a positive integer $d$ such that for any $i,j$ and $f\in \CC[X]_i$, $g\in \CC[X]_j$ we have $\{f,g\}\in \CC[X]_{i+j-d}$. Note that we have gradings on $\CC[\OO]$, $\CC[\widehat{\OO}]$, such that $\Spec(\CC[\OO])$ and $\Spec(\CC[\widehat{\OO}])$ are conical with $d=2$. For $\CC[\widehat{\OO}]$ see discussion before Theorem $1$ in \cite{Brylinski}.
	
	\subsection{The Namikawa space $\fP$}\label{sect: Namikawa}
	
	Recall that a normal algebraic variety $\widetilde{X}$ is called $\QQ$-factorial if for any Weil divisor it has a nonzero integral multiple that is Cartier. %The following statement follows from the results of \cite{BCHM} and \cite{Namikawa}.
	\begin{prop}\cite[Proposition 2.1]{LosevSRA}
		Let $X$ have symplectic singularities. Then there is a birational projective morphism $\rho: \widetilde{X}\to X$, where $\widetilde{X}$ has the following properties:
		
		(a) $\widetilde{X}$ is an irreducible, normal, Poisson variety (and hence has symplectic singularities).
		
		(b) $\widetilde{X}$ is $\QQ$-factorial.
		
		(c) $\widetilde{X}$ has terminal singularities.
		
		If $X$ is, in addition, conical, then $\widetilde{X}$ admits a $\CC^\times$-action such that $\rho$ is $\CC^\times$-equivariant.
	\end{prop}
\begin{rmk}\label{Namikawa codim 4}
	In \cite{Namikawa2001} Namikawa has shown that modulo (a), condition (c) is equivalent to $\codim_{\widetilde{X}}\widetilde{X}/\widetilde{X}^{reg}\ge 4$. In what follows, we will check this condition instead of verifying (c).
\end{rmk}	
	 Such $\widetilde{X}$ is called a $\QQ$-factorial terminalization of $X$. We recall the {\itshape Namikawa space} for $X$ that is defined as $\fP(X)=H^2(\widetilde{X}^{reg}, \CC)$. When the conical symplectic singularity $X$ in question is clear, we write $\fP$ for $\fP(X)$. 
	
	We want to restate the definition of $\fP$ in terms of $X$ without using a $\QQ$-factorial terminalization. Let $\fL_1, \fL_2, \ldots, \fL_k$ be the codimension $2$ symplectic leaves of $X$. Let $\Sigma_i^\wedge=(\CC^2)^{\wedge 0}/\Gamma_i$ be the formal slice to $\fL_i$. Let $\Sigma_i$ be $\CC^2/\Gamma_i$, $\widetilde{\Sigma}_i$ be its minimal resolution, and $\widetilde{\fP}_i=H^2(\widetilde{\Sigma}_i, \CC)$. Recall that we have an ADE classification of finite subgroups of $Sp(2,\CC)$, and by McKay correspondence the affine space $\widetilde{\fP}_i$ is isomorphic to the Cartan space of the Lie algebra of the same type. Let $\widetilde{W}_i$ be the corresponding Weyl group acting on $\widetilde{\fP}_i$. We have the natural monodromy action of $\pi_1(\fL_i)$ on $\widetilde{\fP}_i$ and $\widetilde{W}_i$ by diagram automorphisms. Set $\fP_i=\widetilde{\fP}_i^{\pi_1(\fL_i)}$, $W_i=\widetilde{W}_i^{\pi_1(\fL_i)}$ to be the fixed points of this action. We have the following description of the space $\fP$ due to Namikawa \cite{Namikawa} and Losev \cite{Losev4}.
	\begin{prop}\label{P=P}
		We have $\fP=H^2(X^{reg}, \CC)\oplus \bigoplus_{i=1}^k \fP_i$.
	\end{prop}
	We call $\fP$ the {\itshape Namikawa space}, $\fP_0=H^2(X^{reg}, \CC)$ the {\itshape smooth part of the Namikawa space}, and $\fP_i$ the {\itshape partial Namikawa space} for the symplectic leaf $\fL_i$. 
	Recall the Namikawa-Weyl group, that is $W=\prod_{i=1}^k W_i$. The importance of $\fP$ and $W$ is explained by the following fact.
	\begin{prop}\label{general quant}\cite{Namikawa2}
		The isomorphism classes of filtered Poisson deformations of $\CC[X]$ are classified by the points of $\fP/W$.
	\end{prop}

	\section{Filtered Poisson deformations of $\Spec(\CC[\widehat{\OO}])$}\label{quant}
	\subsection{$\QQ$-terminalization of $\Spec(\CC[\widehat{\OO}])$}
	
	In \cite{Losev4} Losev computed the Namikawa space $\fP$ for $X=\Spec(\CC[\OO])$. In this section we extend his result to $\Spec(\CC[\widehat{\OO}])$ for any $G$-equivariant cover $\widehat{\OO}$ of the orbit $\OO$.
	
	%Let $\fl\subset \fg$ be a proper Levi subalgebra, $\OO_L$ be a nilpotent  orbit, and $\widehat{\OO}_L$ be a birationally rigid $L$-equivariant cover such that $\widehat{\OO}$ is birationally induced from $\widehat{\OO}_L$.  
	\begin{prop}\label{Q-term}
		Suppose that $\widehat{\OO}$ is birationally rigid and let $X:=\Spec(\CC[\widehat{\OO}])$. Then $X$ is $\QQ$-factorial and terminal.
	\end{prop}
	\begin{rmk}	\label{Q-term to Namikawa}
		For the trivial cover $\widehat{\OO}\simeq \OO$ the proposition is known by the works of Namikawa \cite{Namikawa2009} and Losev \cite{Losev4}. 
		
		In \cite[Corollary 2.4, Proposition 3.4, Proposition 3.6]{NamikawaQ} Namikawa proved that $\Spec(\CC[\widetilde{\OO}])$ is $\QQ$-factorial terminal for the universal covering $\widetilde{\OO}$ of an orbit $\OO$ with some conditions on the corresponding partition $\alpha$. One can check that if $\widetilde{\OO}$ is birationally rigid then $\alpha$ satisfies the conditions from loc.cit.
	\end{rmk}
	\begin{proof}We follow the approach of \cite[Proposition 4.5]{Losev4}, this argument can be easily generalized to the case of $G$-equivariant covers. First, note that $X^{reg}\backslash\widehat{\OO}$ is of complex codimension at least $2$ in $X^{reg}$, so $H^2(X^{reg}, \CC)=H^2(\widehat{\OO}_L,\CC)$.
		%				\begin{lemma}
		%					$H^2(\widehat{\OO}, \CC)=0$.
		%				\end{lemma}
		%				\begin{proof}
		%					We set $Y=X^{reg}-\widehat{\OO}$, and let $Y_1, \ldots, Y_m$ be the irreducible components of $Y$. We have a natural $G$-equivariant map $X\to \overline{\OO}$. Then every irreducible component $Y_i$ of $Y$ is a finite $G$-equivariant cover of an orbit $\OO_i\subset \overline{\OO}$, so the complex codimension of $Y$ in $X$ is at least $2$. 
		%					
		%					For every $i$ let us pick a tubular neighborhood $T_i\subset X^{reg}$ of $Y_i$, and apply Meyer-Vietoris exact sequence to the cover of $X^{reg}$ by $\widehat{\OO}$ and $T_i$. We have
		%					\begin{align*}
		%						\ldots\to H^1(\widehat{\OO},\CC)\oplus \bigoplus H^1(T_i, \CC)\to \bigoplus &H^1(T_i\cap \widehat{\OO}, \CC)\to H^2(X^{reg}, \CC)\to\\
		%						\to  H^2(\widehat{\OO},\CC)\oplus \bigoplus H^2(T_i, \CC)&\to \bigoplus H^2(T_i\cap \widehat{\OO}, \CC)\to \ldots
		%					\end{align*}
		%					
		%					The fiber of the vector bundle $T_i\to Y_i$ is $\CC^k$, where $k$ is the codimension of $Y_i$ in $X^{reg}$. From the discussion above, $k\ge 2$. Therefore $H^j(T_i, \CC)=H^j(Y_i,\CC)$ for $j=1$, $2$. Analogously, $T_i\cap \widehat{\OO}$ is a cover of $Y_i$ with the fiver $\CC^k/\{0\}$, and $H^j(T_i\cap \widehat{\OO}, \CC)=H^j(Y_i,\CC)$ for $j=1$, $2$. Therefore the restriction map $H^j(T_i, \CC)\to H^j(T_i\cap \widehat{\OO}, \CC)$ is an isomorphism for $j=1$, $2$, and $H^2(X^{reg}, \CC)\simeq H^2(\widehat{\OO}, \CC)$.
		%				\end{proof}
		Suppose that $\fP=0$. Let $x\in \widehat{\OO}$ be a point, and $Z_G(x)$ be the stabilizer of $x$. As in loc.cit. $H^2(\widehat{\OO}, \CC)=0$ implies $\Hom(Z_G(x), \CC^\times)$ is finite, so by \cite[Lemma 4.1]{Fu} $X$ is $\QQ$-factorial. If $\fP=0$, then $X$ has no symplectic leaves of codimension $2$, and by \cref{Namikawa codim 4} $X$ is terminal. It remains to show that $\fP=0$.
		
		In \cite{Namikawa} Namikawa constructed a universal conical Poisson deformation $X_\fP$ of $X$ over $\fP/W$. Pick Zariski generic $\lambda\in \fP/W$, and let $X_\lambda$ be the fiber over the point $\lambda$, and $\mu_\lambda: X_\lambda\to \fg$ be the moment map. The $G$-action on $X_\lambda$ has an open orbit that we denote by $\widehat{\OO}_\lambda$, and the restriction of $\mu_\lambda$ to the open orbit is a covering map $\widehat{\OO}_\lambda\to \OO_\lambda$ for some adjoint orbit $\OO_\lambda$. Let $\xi$ be the semisimple part of an element in $\OO_\lambda$ and set $\fl$ to be the centralizer of $\xi$ in $\fg$. We set $L$ and $P$ to be the corresponding Levi and parabolic subgroups. The argument of Step 3 in \cite[Proof of Proposition 4.5]{Losev4} implies that $\xi\neq 0$, and therefore $L\subset G$ is a proper Levi subgroup. Let $\OO_L\subset \fl$ be a nilpotent orbit, such that $\xi+\OO_L\subset \OO_\lambda$. Choose $\eta\in \OO_L$ and write $\OO_{L,\lambda}$ for the $L$-orbit of $\xi+\eta\in \fl$. The orbit $\OO_\lambda$ is induced from $\OO_{L,\lambda}$.
		
		Note that by the construction, $\xi$ is semisimple, $\eta$ is nilpotent and $[\xi,\eta]=0$.
		% Then there are polynomials $p$ and $\QQ$ in one variable, such that $\xi=p(\xi+\eta)$ and $\eta=q(\xi+\eta)$. We have $Z_L(\eta+\xi)=Z_L(\eta)\cap Z_L(\xi)=Z_L(\eta)\cap L=Z_L(\eta)$ and $Z_G(\eta+\xi)=Z_G(\eta)\cap Z_G(\xi)=Z_G(\eta)\cap L$. 
		Since $Z_G(\eta+\xi)\subset Z_G(\xi)=L$, we have $Z_L(\eta+\xi)=Z_G(\eta+\xi)$. Since $\pi_1^G(\OO_\lambda)=Z_G(\eta+\xi)/Z_G(\eta+\xi)^\circ$, it follows that $\pi_1^G(\OO_\lambda)=\pi_1^L(\OO_{L,\lambda})$, and any $G$-equivariant cover of $\OO_\lambda$ can be birationally induced from the corresponding $L$-equivariant cover of $\OO_{L,\lambda}$. 
		
		Therefore $\widehat{\OO}_\lambda$ is birationally induced from some cover $\widehat{\OO}_{L,\lambda}$. Since $Z_L(\eta+\xi)=Z_L(\eta)$, we can identify the components group $Z_L(\eta+\xi)/Z_L(\eta+\xi)^\circ$ and $Z_L(\eta)/Z_L(\eta)^{\circ}$. Let $\widehat{\OO}_L$ be the cover of $\OO_L$ corresponding to the cover $\widehat{\OO}_{L,\lambda}$. 
		
		We set $X_L=\Spec(\CC[\widehat{\OO}_L])$, and let $\widetilde{X}_L$ be the $\QQ$-terminalization of $X_L$. Consider $\widetilde{X}_{\CC\xi}=G\times^P(\CC\xi\times \widetilde{X}_L\times \fn)$. That is a normal Poisson scheme over $\CC\xi$ with a Hamiltonian $G$-action, and the fiber over $0$ is $\widetilde{X}^1=G\times^P (\widetilde{X}_L\times \fn)$. Let $X_{\CC\xi}=\Spec(\CC[\widetilde{X}_{\CC\xi}])$. Note that the fiber of $X_{\CC\xi}$ over the point $\xi$ is $\Spec(\CC[\widehat{\OO}_{L,\lambda}])=X_{\lambda}$. The fiber over $0$ is  $X^1=\Spec(\CC[\widetilde{X}^1])$. The open $G$-orbit in $X^1$ is a finite $G$-equivariant cover $\check{\OO}$ of $\OO$, and $X^1=\Spec(\CC[\check{\OO}])$. Since $X_{\CC\xi}$ and $X_{\fP}$ are flat deformations, we have $\CC[X^1]\simeq \CC[X_{\lambda}]$ and $\CC[X_{\lambda}]\simeq \CC[X]$ as $G$-modules. Then $\check{\OO}=\widehat{\OO}$, and therefore $\widehat{\OO}$ is not birationally rigid, so we get a contradiction.
	\end{proof}
%	\begin{rmk}\label{Q-term to Namikawa}
%		In \cite[Corollary 2.4, Proposition 3.4, Proposition 3.6]{NamikawaQ} Namikawa proved that $\Spec(\CC[\widetilde{\OO}])$ is $\QQ$-factorial terminal for the universal covering $\widetilde{\OO}$ of an orbit $\OO$ with some conditions on the corresponding partition $\alpha$. One can check that $\widetilde{\OO}$ is birationally rigid if and only if $\alpha$ satisfies the conditions from loc.cit. %It follows either from the main result of the present paper, or from the fact that for any cover $\widehat{\OO}$ there is a unique birationally rigid orbit $\widehat{\OO}_L$ 
%	\end{rmk}
	\begin{cor}\label{P=z(l)}
		There is a unique pair of a Levi algebra $\fl$ and a birationally rigid cover $\widehat{\OO}_L$ of an orbit $\OO_L\subset \fl$, such that $\widehat{\OO}$ is birationally induced from $\widehat{\OO}_L$. The pair $(\fl, \widehat{\OO}_L)$ is called the \emph{birational induction datum} of $\widehat{\OO}$. The space $G\times^P(\Spec(\CC[\widehat{\OO}_L])\times \fn)$ is a $\QQ$-terminalization of $\Spec(\CC[\widehat{\OO}])$, and the Namikawa space $\fP$ equals to $\fz(\fl)$.
	\end{cor}
	
	\begin{proof}
		The proof is completely analogous to \cite[Corollary 4.6 and Proposition 4.7]{Losev4}. The only new thing we have to show is that there exists a birational map $\rho: G\times^P(\Spec(\CC[\widehat{\OO}_L])\times \fn)\to \Spec(\CC[\widehat{\OO}])$. Consider the Stein factorization of the generalized Springer map $G\times^P(\Spec(\CC[\widehat{\OO}_L])\times \fn)\to {Z}\to \overline{\OO}$. We note that since $\mu: G\times^P(\Spec(\CC[\widehat{\OO}_L])\times \fn)\to \overline{\OO}$ is $G$-equivariant, ${Z}$ admits an action of $G$. Then $\pi: {Z}\to \overline{\OO}$ is a finite $G$-equivariant map, and ${Z}$ is affine and normal. It follows that ${Z}=\Spec(\CC[\pi^{-1}(\OO)])$. The map $\rho: G\times^P(\Spec(\CC[\widehat{\OO}_L])\times \fn)\to {Z}$ is birational, thus implying that $Z=\Spec(\CC[\widehat{\OO}])$.
	\end{proof}
	\begin{rmk}\label{P=z(l) to Namikawa}
		For the universal cover $\widetilde{\OO}$ of $\OO$, a $\QQ$-terminalization of $\Spec(\CC[\widetilde{\OO}])$ is constructed in \cite{NamikawaQ}. This $\QQ$-terminalization is isomorphic to the one from \cref{P=z(l)}. %Indeed, both are isomorphic to $G\times^P(\Spec(\CC[\widehat{\OO}_L])\times \fn)$ by construction.
	\end{rmk}
	\begin{cor}\label{restrict}
		Suppose that the cover $\widehat{\OO}$ is birationally induced from a birationally rigid cover $\widehat{\OO}_L$ of the orbit $\OO_L\subset \fl$ for some $\fl=\prod \fg\fl_{i\in S}^{k_i}\times \fg_{n'}$. Note that the only birationally rigid orbit in $\fg\fl_i$ is the $\{0\}$ orbit. Then $\widehat{\OO}$ cannot be birationally induced from $\{0\}\times \widehat{\OO}_{L'}\subset \fl'=\fg\fl_m\times \fg_{n-2m}$ for any $m\notin S$. 
	\end{cor}
	\begin{proof}
		Suppose that $\widehat{\OO}$ can be birationally induced from the cover $\widehat{\OO}_{L'}$ of an orbit $\{0\}\times \OO_{L'}\subset \fl'$. By \cref{P=z(l)} we have a Levi subalgebra $\fl_1\subset \fg_{n-2m}$ and a birationally rigid cover $\widehat{\OO}_{L_1}$ of an orbit $\OO_{L_1}\subset \fl_1$, such that $\widehat{\OO}_{L'}$ is birationally induced from $\widehat{\OO}_{L'}$. Then $\widehat{\OO}$ is birationally induced from $\widehat{\OO}_{L_1}$; that contradicts to the uniqueness of the pair $(\fl, \widehat{\OO}_L)$.
	\end{proof}
	%\begin{cor}\label{P=z(l)}
	%	Suppose that $\widehat{\OO}$ is birationally induced from $\widehat{\OO}_L$ for some $\OO_L\subset \fl^*$. Let $\fP$ and $\fP_0$ be the Namikawa spaces for $\Spec(\CC[\widehat{\OO}])$ and $\Spec(\CC[\widehat{\OO}_L])$ correspondingly. Then $\dim\fP=\dim\fP_0 +\dim \fz(\fl)$.
	%\end{cor}
	%\begin{proof}
	%   By \cref{Levi} we can assume that $\fl=\fg\fl_{t_1}\times \fg\fl_{t_2}\times \ldots \fg\fl_{t_m}\times \fg_{n-2\sum_i t_i}$. Suppose that $\widehat{\OO}_L$		
	%\end{proof}

	\subsection{Birationality criteria for Lusztig-Spaltenstein induction}\label{birLS}
	In this section we study when the Lusztig-Spaltenstein induction on orbits is birational and use it to determine whether a given cover of an orbit can be birationally induced. Let us first introduce the notion of $\alpha$-singularity.
	
	Let $\OO\subset \fg$ be a nilpotent orbit, and $\alpha\in \cP(n)$ be the corresponding partition. For every $m\le n$ let $d_m=\left[\frac{\alpha_m-\alpha_{m+1}}{2}\right]$. We say that an integer $m$ is $\alpha$-{\itshape singular} if $\alpha_m-\alpha_{m+1}\ge 2$, or equivalently, $d_m\ge 1$. \cref{KP} implies that $\alpha$-singular numbers are in $1$-to-$1$ correspondence with partitions of codimension $2$ orbits in $\overline{\OO}$. We note that one $\alpha$-singular number may correspond to two very even orbits in type $D$, the partitions of these orbits are differed only by the indices $^{I}$, $^{II}$. The singularities to these two orbits are equivalent. We write $\OO_m$ for the union of $\OO_m^I\sqcup \OO_m^{II}$, and define the slice to $\OO_m\subset \overline{\OO}$ by considering the slice to any of the two connected components. In combinatorial terms, we ignore the indices of the partitions. We denote the set of $\alpha$-singular numbers by $\cS(\alpha)$. 
	\begin{lemma}\label{lll}
		Suppose that an orbit $\OO$ corresponding to the partition $\alpha\in \cP(n)$ is induced from a nilpotent orbit $\OO_L$ in the Levi subalgebra $\fl$, such that the restriction of $\OO_L$ to each $\fg\fl$-factor of $\fl$ is the $\{0\}$ orbit.
  
        Then $\fl$ is $G$-conjugate to $\prod_{i} \fg\fl_i^{k_i}\times \fg_{n-\sum_i 2ik_i}$, where $k_i\le d_i+1$. Moreover, $k_i=d_i+1$ only if $\alpha_{i-1}>\alpha_i=\alpha_{i+1}+2k_i>\alpha_{i+2}$, and $\alpha_i$ has the opposite parity to $\fg$.
	\end{lemma}
\begin{proof}
	By \cref{Levi}, $\fl$ is $G$-conjugate to $\prod_i \fg\fl_i^{k_i}\times \fg_{n-2\sum ik_i}$ for some $\{k_i\}$. Then $\OO$ is induced from an orbit $\OO_{L'}=\{0\}^{k_i}\times \OO_{L'}\subset \fg\fl_i^{k_i}\times \fg_{n-2ik_i}$, and let $\beta\in \cP(n-2ik_i)$ be the partition of $\OO_{L'}$. Then by \cref{LS} we have $\alpha_i-\alpha_{i+1}\ge (\beta_i+2k_i-1)-(\beta_{i}+1)=\beta_i-\beta_{i+1}+2k_i-2\ge 2k_i-2$.  Thus, $k_i\le d_i+1$. We note that $\alpha_i-\alpha_{i+1}< 2k_i$ only if inducing from $\OO_{L'}$ to $\fg_{n-2i(k_i-1)}$ involves collapse on the level of partitions. It implies that $k_i= d_i+1$ only if $\alpha_{i-1}>\alpha_i=\alpha_{i+1}+2k_i>\alpha_{i+2}$, and $\alpha_i$ has the opposite parity to $\fg$, q.e.d. 
%	The following lemma follows directly from \cref{Levi}, \cref{LS} and the definition above.
\end{proof}
	
%%	\begin{rmk}
%%		The lemma above can be deduced from \cite{NamikawaQ}. Namely, it was shown in loc.cit. that every simple induction by $\fg\fl_i\times \fg_t$ such that $$
%	\end{rmk}
	We say that $\alpha$ is \textit{special at $k$}, if $\fg$ is of type $D$, and $\alpha_k=\alpha_{k+1}$ are the only odd parts of the partition $\alpha$. Recall the following well-known fact about the second cohomology of nilpotent orbits.
		
		\begin{prop} \label{coh}  
			Let $\OO\subset \fg$ be a nilpotent orbit in a classical simple Lie algebra. Let $\alpha$ be the corresponding partition.
			
			1) If $\alpha$ is special at $k$, then $H^2(\OO, \CC)=\CC$.
			
			2) If $\alpha$ is not special at any $k$, then $H^2(\OO, \CC)=0$.
		\end{prop}
		\begin{proof}
			 Let $\fz$ be the center of the Lie algebra $\fq$ corresponding to $\QQ$. We have $H^2(\OO, \CC)=(\fz^*)^Q$. A direct computation finishes the proof, see, for example, \cite[Theorems 5.4--5.6]{BISWAS}.
		\end{proof}
	
	\begin{prop}\label{induction}
		Let $\OO\subset \fg$ be a nilpotent orbit induced from $\{0\}\times \OO_L\subset \fg\fl_m\times \fg_{n-2m}$. We set $\alpha\in \cP(n-2m)$ to be the partition corresponding to $\OO_L$ and define $\beta$ as in \cref{LS}. Then $\OO$ is birationally induced from $\OO_L$ if and only if one of the following holds:
		
		1) $\beta=\alpha^m$;
		
		2) $\fg=\fs\fo_n$, $\beta$ is the collapse of $\alpha^m$, and all parts of $\alpha$ are even.
	\end{prop}
	\begin{proof}
		For $\fg=\fs\fp_{2n}$, this is proved in \cite[Claims 2.3.1,2.3.2]{NamikawaQ}. For $\fg=\fs\fo_n$, the reference is \cite[Claims 3.6.1,3.6.2]{NamikawaQ}.
	\end{proof}
	
	%\begin{cor}\label{rigid orbits}
	%	Let $\OO\subset \fg$ be a nilpotent orbit, and $\alpha\in \cP(n)$ be the corresponding partition. Then $\OO$ is birationally rigid if and only if the following two conditions hold:
	
	%	1) $\cS(\alpha)=0$, i.e. $\alpha_m-\alpha_{m+1}\le 1$ for all $m\le n$;
	
	%	2) $\alpha$ does not satisfy $(*)$-condition. 
	%\end{cor}
	Let $\fl=\fg\fl_m\times \fg_{n-2m}$ be a Levi subalgebra, $\OO_L\subset \fg_{n-2m}$ be a nilpotent orbit, and $\rho: G\times^P (\overline{\OO}_L\times \fn)\to \overline{\OO}$ be the generalized Springer morphism. Set $\widehat{\OO}=\rho^{-1}(\OO)$. The embedding $i: \widehat{\OO}\to G\times^P ({\OO}_L\times \fn)$ induces a map $i_*: \pi_1(\widehat{\OO})\to \pi_1 (G\times^P ({\OO}_L\times \fn))$. Since both varieties are smooth, and the complement to $\widehat{\OO}$ in $G\times^P ({\OO}_L\times \fn)$ is of real codimension at least $2$, the map $i_{*}$ is surjective. We have a bundle $G\times^P({\OO}_L\times \fn)\to G/P$ with the fiber ${\OO}_L\times \fn$. The base $G/P$ is simply connected, so $\pi_1(G\times^P({\OO}_L\times \fn))=\pi_1({\OO}_L\times \fn)=\pi_1({\OO}_L)$. 
	\begin{prop}\label{pi1g}
		The map $i_*$ induces a well defined map $\phi: \pi_1^G(\widehat{\OO})\to \pi_1^{G_{n-2m}}(\OO_L)$. Moreover, $\phi$ is surjective.
	\end{prop}
	\begin{proof}
            This proposition proved in \cite[Section 2.5]{LMBM}. For the sake of completeness we provide the argument below.

            Pick a point $x\in \widehat{\OO}$, and let $G_x\subset G$ be the stabilizer of $x$.  Then the fibration $G\to G/G_x\simeq \widehat{\OO}$ induces an exact sequence
            $$\pi_1(G)\to \pi_1(\widehat{\OO})\to \pi_0(G_x)\to 0$$

            On the other side, consider $Z=G\times^P \OO_L$, where the action of the unipotent radical $U$ of $P$ on $\OO_L$ is trivial. We have a fibration $G\times^P ({\OO}_L\times \fn)\to Z$ with fiber $\fn$, which implies $\pi_1(\OO_L)\simeq \pi_1(G\times^P({\OO}_L\times \fn)) \simeq \pi_1(Z)$. By choosing a point $(1,y)\in Z$ the action of $G$ induces a fibration $G\to Z$ with the fiber $Q_{y}\simeq L_y\rtimes U$. Thewrefore, we have an exact sequence
            $$\pi_1(G)\to \pi_1(Z)\to \pi_0(Q_y)\simeq \pi_0(L_y)\to 0$$		

            Combining the two exact sequences with the identifications $\pi_1^G(\widehat{\OO})\simeq \pi_0(G_x)$, $\pi_1^L({\OO}_L)\simeq \pi_0(L_y)$, and $\pi_1(\OO_L)\simeq \pi_1(Z)$ we get the following commutative diagram.

		\begin{center}
    \begin{tikzcd}
      \pi_1(G) \ar[r] \ar[d,equals] & \pi_1(\widehat{\mathbb{O}}) \ar[r] \ar[d,twoheadrightarrow,"i_*"] & \pi_1^G(\widehat{\mathbb{O}}) \ar[r] & 1\\
      \pi_1(G) \ar[r] & \pi_1(\OO_L) \ar[r] & \pi_1^L({\mathbb{O}}_L) \ar[r] & 1
    \end{tikzcd}
\end{center}
    The claim follows immediately from the fact that $\pi_1^L(\OO_L)\simeq \pi_1^{G_{n-2m}}(\OO_L)$.
	\end{proof}
%	\begin{cor}\label{surj pi}
%		The map $\phi$ is surjective.
%	\end{cor}
%	Then $\pi_1^G(\OO)=(\ZZ/2\ZZ)^k$, where $k\ge 1$. 
%	Using notations of \cref{pi_1}, $\pi_1^G(\OO)$ can be identified with $\displaystyle \prod_{(-1)^i=-\e_\fg} O_{r_i}/SO_{r_i}$. We set $(\ZZ/2\ZZ)_m=O_{r_m}/SO_{r_m}\subset \pi_1^G(\OO)$, and $\Gamma_m$ to be the kernel of the natural projection $\pi_1^G(\OO)\to (\ZZ/2\ZZ)_m$.
	
	\begin{cor}\label{o2}
		Suppose that the generalized Springer morphism $\rho: G\times^P (\overline{\OO}_L\times \fn)\to \overline{\OO}$ is not birational. Then $\widehat{\OO}$ is a $2$-fold cover of $\OO$, and $\dim H^2(\widehat{\OO},\CC)=\dim H^2(\OO,\CC)+1$.
	\end{cor} 
	\begin{proof}
		
%The argument from \cite[Lemma 5.5]{Losev2014} shows that $\Gamma_m\subset \pi_1^G(\widehat{\OO})$. But 
		We have $\pi_1^G(\widehat{\OO})\subsetneq \pi_1^G(\OO)$, and therefore $|\pi_1^G(\OO)|\ge 2|\pi_1^G(\widehat{\OO})|$. By \cref{induction} and \cref{pi_1}, $|\pi_1^G(\OO)|=2|\pi_1^{G_{n-2m}}(\OO_L)|$. Since $\phi$ is an epimorphism, $|\pi_1^G(\widehat{\OO})|\ge |\pi_1^{G_{n-2m}}(\OO_L)|$. Therefore we have $|\pi_1^G(\OO)|=2|\pi_1^{G}(\widehat{\OO})|$, and $|\pi_1^G(\widehat{\OO})| = |\pi_1^{G_{n-2m}}(\OO_L)|$. Then $\pi_1(\widehat{\OO})\subset \pi_1(\OO)$ is a subgroup of index $2$, and $\widehat{\OO}\to \OO$ is $2$-fold. The map $\phi$ constructed above is an isomorphism.
		% We will denote the subgroup $\pi_1(\widehat{\OO})\subset \pi_1(\OO)$ by $\Gamma_m$.
		
		As in \cref{part}, let $Q$ be the reductive part of the stabilizer of $e\in \OO$, $\fq$ be the corresponding Lie algebra, and $\fz$ be the center of $\fq$, that is the direct sum of some copies of $\fs\fo_2$. It is well-known that $H^2(\OO, \CC)=(\fz^*)^Q=(\fz^*)^{\pi_1^G(\widehat{\OO})}$. Suppose that $\pi_1^{G_{n-2m}}({\OO}_L)\simeq (\ZZ/2\ZZ)^k$. The action of $\pi_1^G(\widehat{\OO})\simeq \pi_1^{G_{n-2m}}({\OO}_L)$ on $\fz^*$ is diagonalizable with $k$ eigenvalues $-1$, and all other $0$. Therefore, the space of invariants $(\fz^*)^{\pi_1^G(\widehat{\OO})}$ is the $0$ eigenspace for this action, so $\dim H^2(\widehat{\OO},\CC)=\dim (\fz^*)^{\pi_1(\widehat{\OO})}=\dim \fz-k$. The group $\pi_1^G(\OO)\simeq 2^{k+1}$ acts on $\fz^*$ with $k+1$ eigenvalues $-1$, so $\dim H^2(\OO,\CC)=\dim (\fz^*)^{\pi_1({\OO})}=\dim \fz - k-1$. Therefore, $\dim H^2(\widehat{\OO},\CC)=\dim H^2(\OO,\CC)+1$. 
%By the proof of \cref{coh}, $\dim H^2(\widehat{\OO},\CC)$ equals the number of $i$, such that $r_i=2$, $i$ and $\fg$ are of different parity, and the action of $\pi_1(\widehat{\OO})$ on the corresponding $\fs\fo_2\subset \fz(\fq)$ is trivial. The action of $\Gamma_m$ is trivial on $\fs\fo_2$ corresponding to $m$, and coincides with the action of $\pi_1^G(\OO)$ on all other copies of $\fs\fo_2$. Therefore $\dim H^2(\widehat{\OO},\CC)=\dim H^2(\OO,\CC)+1$. 
	\end{proof}

%	We want to understand, which covers of $\OO$ can be birationally induced from a cover of $\OO_L$. Let $\check{\OO}$ be a $G$-equivariant cover of $\OO$. Set $\widehat{\OO}$ to be the preimage of $\OO$ under the generalized Springer morphism $\rho: G\times^P (\overline{\OO}_L\times \fn)\to \overline{\OO}$. We have a map $\phi: \pi_1^G(\widehat{\OO})\to \pi_1^{G_{n-2m}}$. 
    \begin{lemma}\label{preimage of phi}
        Let $\check{\OO}_L$ be an $G_{n-2m}$-equivariant cover of an orbit $\OO_L$, and $\check{\OO}$ be the $G$-equivariant cover of $\OO$ that is birationally induced from $(L, \check{\OO}_L)$. Then $\pi_1^G(\check{\OO})=\phi^{-1}(\pi_1^L(\check{\OO}_L)$.
    \end{lemma}
    \begin{proof}
        The following commutative diagram is Cartesian square.
        
    \begin{center}
    \begin{tikzcd}
      \check{\OO} \ar[r, hookrightarrow] \ar[d] & G\times^P (\check{\OO}_L\times \fn) \ar[d]\\
      \widehat{\OO} \ar[r, hookrightarrow]  & G\times^P ({\OO}_L\times \fn)
    \end{tikzcd}
    \end{center}
        This automatically implies that $\pi_1(\check{\OO})=i_*^{-1}(\pi_1(\check{\OO}_L))$. To obtain the claim in question we argue in the same way as in the proof above.
    \end{proof}
 
	Let us denote the subgroup $\pi_1^G(\widehat{\OO})\subset \pi_1^G(\OO)$ by $\Gamma_m$, and the kernel of the map $\phi$ by $H_m$. Note that $\Gamma_m=\pi_1^G(\OO)$ if $\rho$ is birational, and $H_m$ is trivial otherwise. 
	\begin{cor}\label{bircov}\label{birbircov}
		Let $\check{\OO}$ be a connected $G$-equivariant cover of $\OO$. $\check{\OO}$ is birationally induced from some $G_{n-2m}$-equivariant cover $\check{\OO}_L$ of $\OO_L$ if and only if $H_m\subset \pi_1^G(\check{\OO})\subset \Gamma_m$.
	\end{cor}
	\begin{proof}
		Suppose that $\check{\OO}$ is birationally induced from $\check{\OO}_L$. Then the covering map $\check{\OO}_L\to \OO_L$ induces a $G$-equivariant map $G\times^P(\Spec(\CC[\check{\OO}_L])\times \fn)\to G\times^P(\Spec(\CC[{\OO}_L])\times \fn)$, and therefore a covering map $\check{\OO}\to \widehat{\OO}$. Thus $\pi_1^G(\check{\OO})\subset \pi_1^G(\widehat{\OO})=\Gamma_m$. By \cref{preimage of phi} we have $\pi_1^G(\check{\OO})=\phi^{-1}(\pi_1^{G_{n-2m}}(\check{\OO}_L))$, so $H_m\subset \pi_1^G(\check{\OO})$.
		
		Now, suppose that $H_m\subset \pi_1^G(\check{\OO})\subset \Gamma_m$. Let $\check{\OO}_L$ be a cover of $\OO_L$ corresponding to the subgroup $\phi(\pi_1^G(\check{\OO}))\subset \pi_1^{G_{n-2m}}({\OO}_L)$. Let $\mathring{\OO}$ be the cover birationally induced from $\check{\OO}_L$. Then by \cref{preimage of phi} we have $\pi_1^G(\mathring{\OO})=\phi^{-1}(\pi_1^{G_{n-2m}}(\check{\OO}_L))=\pi_1^G(\check{\OO})$ as a subgroup of $\Gamma_m$, so $\mathring{\OO}\simeq \check{\OO}$.
	
	\end{proof}
%	We have an analogous statement for the case of a birational induction.
%		\begin{cor}\label{birbircov}
%		Suppose that the orbit $\OO$ is birationally induced from $\OO_L\subset \fg_{n-2m}$. Let $\phi: \pi_1^G(\OO)\to \pi_1^{G_{n-2m}}(\OO_L)$ be the corresponding map of equivariant fundamental groups, and let $H_m$ be the kernel of this map. Then the cover $\check{\OO}$ of $\OO$ is birationally induced from some cover $\check{\OO}_L$ of $\OO_L$ if and only if $\pi_1^G(\check{\OO})\supset H_m$.
%	\end{cor}
%		\begin{proof}
%			Suppose that $\check{\OO}$ is birationally induced from $\check{\OO}_L$. Then $\pi_1^G(\check{\OO})=\phi^{-1}(\pi_1^{G_{n-2m}}(\check{\OO}_L))$, so $\pi_1^G(\check{\OO})\supset H_m$.
%			
%			Now, suppose that $\pi_1^G(\check{\OO})\supset H_m$. We have a map $\phi: \pi_1^G(\OO)\to \pi_1^{G_{n-2m}}({\OO}_L)$, and let $\check{\OO}_L$ be a cover of $\OO_L$ corresponding to the subgroup $\phi(\pi_1^G(\check{\OO}))\subset \pi_1^{G_{n-2m}}({\OO}_L)$. Let $\mathring{\OO}$ be the cover birationally induced from $\check{\OO}_L$. Then we have $\pi_1^G(\mathring{\OO})=\phi^{-1}(\pi_1^{G_{n-2m}}(\check{\OO}_L))$, so $\pi_1^G(\mathring{\OO})$ is an extension of $\pi_1^{G_{n-2m}}(\check{\OO}_L)$ by $H_m$. Since both $\pi_1^G(\mathring{\OO})$ and $\pi_1^G(\check{\OO})$ are abelian, they are isomorphic as subgroups of $\pi_1^G({\OO})$, so $\mathring{\OO}\simeq \check{\OO}$.
%		\end{proof}
    \section{Computing the partial Namikawa spaces}\label{sect: partial}
    Let us explain the main idea of this paper. Let $\widehat{\OO}$ be a connected $G$-equivariant cover of $\OO$, and let $\widehat{X}=\Spec(\CC[\widehat{\OO}])$. In most cases the singularity of $\widehat{\OO}_m\subset \Spec(\CC[\widehat{\OO}])$ can be determined by the dimension of the partial Namikawa space space $\fP_m=\fP_m(\widehat{X})$ corresponding to the (connected components of) $\widehat{\OO}_m$. Moreover, since $\fP_m=\widetilde{\fP}_m^{\pi_1(\widehat{\OO}_m)}$, this computation leads to understanding the symmetry of the singularity. In \cref{sect: Namikawa} the description of the spaces $\fP_m$ is given purely in terms of the slice to $\widehat{\OO}_m\subset \Spec(\CC[\widehat{\OO}])$. To achieve our results we need a Lie-theoretic description of $\fP_m$ that can be deduced from the partition $\lambda$ corresponding to $\OO$. In the language of \cite[Section 7.5]{LMBM} we explicitly compute the Levi subalgebra adapted to $\widehat{\OO}_m$. Let us state the main result, the rest of this technical section is devoted to the proof of it.
   
    Let $(\fl, \widehat{\OO}_L)$ be the birational induction datum corresponding to $\widehat{\OO}$. By \cref{Levi}, $\fl\simeq \prod_i \fg\fl_i^{t_i}\times \fg_{n-\sum_i 2it_i}$. \cref{lll} implies that we can rewrite it in the form $\fl\simeq \prod_{i\in S(\alpha)} \fg\fl_i^{k_i}\times \prod_{i\in S} \fg\fl_i\times \fg_{n-\sum_i 2it_i}$ for some finite set $S$, such that $S\cap S(\alpha)=\emptyset$. 
    \begin{prop}\label{combinatorics partial}
        Using the notations above, we have the following description of the partial Namikawa spaces of $\Spec(\CC[\widehat{\OO}])$.
        \begin{itemize}
            \item $\dim \fP_m=k_m$ for all $m\in S(\alpha)$;
            \item $\dim \fP_0=|S|$.
        \end{itemize}
    \end{prop}
    
    For the sake of clarity, we prefer to explain the result for $X=\Spec(\CC[\OO])$ first. It shows all the ideas of the general case in a setting, where we can avoid the overuse of diacritics. 
	
 \subsection{Computing the spaces $\fP_i$ from the partition $\alpha$}\label{Porbit}	
	Recall from \cref{birLS} the set of $\alpha$-singular elements $\cS(\alpha)$, and the numbers $d_i$ for all $i\le n$. For $m\in \cS(\alpha)$ let $\OO_m$ be the corresponding codimension $2$ orbit, and $\fP_m$ be the corresponding partial Namikawa space. When $\OO_m=\OO_m^I \sqcup \OO_m^{II}$ is the union of two very even orbits we define $\fP_m=\fP_m^I \sqcup \fP_m^{II}$, where $\fP_m^{*}$ is the partial Namikawa space corresponding to the symplectic leaf $\OO_m^{*}\subset \Spec(\CC[\OO])$.
 
    The first step to understand $\fP_m$ is to explicitly compute the Levi subalgebra $\fl$, such that $\OO$ is birationally induced from a birationally rigid nilpotent orbit in $\fl$. 
	
	\begin{prop}\label{prel}
		Let $t=n-2\sum_{i\in \cS(\alpha)} id_i$ and consider the Levi subalgebra $\fl_0=\prod_{i\in \cS(\alpha)} \fg\fl_i^{d_i}\times \fg_{t}\subset \fg$. There exists an orbit $\OO_{L_0}\subset \fl_0$, such that $\OO$ is birationally induced from $\OO_{L_0}$, and $\overline{\OO}_{L_0}$ has no codimension $2$ orbits.
	\end{prop}
	\begin{proof}
		Consider the partition $\beta=(\beta_1, \beta_2, \ldots, \beta_{t})$ obtained in the following way. For every number $k$ let $\cS(\alpha)_{\ge k}$ be the set of $m\in \cS(\alpha)$ such that $m\ge k$. Define $\beta_k=\alpha_k-2\sum_{i\in \cS(\alpha)_{\ge k}} d_i$. It is easy to see that $\beta\in \cP(t)$. We define $\OO_{L_0}$ to be the nilpotent orbit in $\fg_t$ and therefore in $\fl_0$ corresponding to the partition ${\beta}$. 
		
		The orbit $\OO$ is birationally induced from $\OO_{L_0}$ by \cref{induction}. Suppose that $\OO'_{L_0}\subset \overline{\OO}_{L_0}$ is an orbit of codimension $2$, corresponding to $m\in \cS(\beta)$. Then $\beta_m-\beta_{m+1}\ge 2$, and $\alpha_m-\alpha_{m+1}=(\beta_m+2\sum_{i\in \cS(\alpha)_{\ge m}} d_i) -(\beta_{m+1}+2\sum_{i\in \cS(\alpha)_{\ge m+1}} d_{i})=\beta_m-\beta_{m+1}+2d_m\ge 2d_m+2$. That contradicts to the definition of $d_m$. Therefore $\overline{\OO}_{L_0}$ has no codimension $2$ orbits. Note that this argument implies that $\OO_{L_0}$ cannot be very even orbit, and thus is uniquely defined by the partition $\beta$.
	\end{proof}
	\begin{cor}\label{l}
		Let $\fl_0$ and $\beta$ be as above. Define a Levi subalgebra $\fl\subset \fg$ as follows.
		If $\beta$ is special at $k$, then $\fl=\prod_{i\in \cS(\alpha)} \fg\fl_i^{d_i}\times \fg\fl_k\times \fg_{t-2k}\subset \fg$. Otherwise, $\fl=\fl_0$.
		
		There is a birationally rigid orbit $\OO_L\subset \fl$, such that $\OO$ is birationally induced from $\OO_L$.
	\end{cor}
	\begin{proof}
		If $\beta$ is not special at any $k$, then we set $\OO_L=\OO_{L_0}$. By \cref{prel}, $X_L=\Spec(\CC[\OO_L])$ has no symplectic leaves of codimension $2$, so the corresponding Namikawa space $\fP (X_L)$ equals $H^2(\OO_L, \CC)$. \cref{coh} implies that $\fP (X_L)=0$, and therefore $\OO_L$ is birationally rigid.
		
		If $\beta$ is special at $k$ then $\OO_{L_0}$ is birationally induced from some orbit $\OO_L\subset \fl$ by \cref{induction}. We have $\fP_0(X_{L_0})=H^2(\OO_{L_0}, \CC)=\CC$, and therefore $\dim \fP (X_L)=\dim \fP (X_{L_0}) -1=0$, so $\OO_L$ is birationally rigid.
	\end{proof}
	%		\begin{cor}
	%			Let $\OO\subset \fg$ be a nilpotent orbit, and $\alpha$ be the corresponding partition. Then:
	
	%			1) If $\alpha$ satisfies $(*)$-condition, then $\dim \fP=\sum_{i\in \cS(\alpha)} d_i+1$;
	
	%			2) otherwise, $\dim \fP=\sum_{i\in \cS(\alpha)} d_i$.
	%		\end{cor}
	%		\begin{proof}
	%		By \cref{P=z(l)} we have $\fP=\fz(\fl)$.
	%		\end{proof}
	\begin{prop}\label{pifroml}
		Let $\OO$, $\lambda$, $\fl$ be as above, and consider $m\in \cS(\alpha)$. Let $\OO_m\subset \Spec(\CC[\OO])$ be the corresponding codimension $2$ orbit, and $\fP_m$ be the corresponding partial Namikawa space. Then:
		\begin{itemize}
                \item If $\alpha_m$ and $\alpha_{m+1}$ are the only two odd members of $\alpha$ (so $\fg$ is of type $D$), and both appear with multiplicity $1$, then $\fP_m\subset \fP$ can be identified with $\CC^{d_m+1}=\fz(\fg\fl_m^{d_m+1})\subset \fz(\fl)$.
			\item Otherwise, $\fP_m\subset \fP$ can be identified with $\CC^{d_m}=\fz(\fg\fl_m^{d_m})\subset \fz(\fl)$.
			\item If $\alpha$ is special at $k$ for some $k$, then $H^2(\OO, \CC)=\CC$ can be identified with $\fz(\fg\fl_k)\subset \fz(\fl)$.
			\item The action of $\pi_1(\OO_m)$ on $\widetilde{\fP}_m$ is trivial if and only if $d_m=1$ or $\alpha_m$ and $\alpha_{m+1}$ are the only two odd members of $\alpha$, both appearing with multiplicity $1$.
		\end{itemize}  
	\end{prop}
        We remark that if $\alpha_m$ and $\alpha_{m+1}$ are the only two odd members of $\alpha$, both appearing with multiplicity $1$, and $d_m=1$, then $\OO_m$ is the union of two very even orbits $\OO_m^I$, $\OO_m^{II}$. Then, $\fP_m=\fP_m^I\oplus \fP_m^{II}$ is identified with $\CC^2=\fz(\fg\fl_m^2)$, and singularity of each of the orbits is Kleinian singularity of type $A_1$, so the monodromy action is trivial. Thus, with our notations the proposition holds in this case.
        
	To prove the proposition we will use induction on $|\cS(\alpha)|$. We will need the following two lemmas.
	\begin{lemma}\label{singind}
		Consider a nilpotent orbit $\OO'\subset \fg_{n-2k}$, the corresponding partition $\alpha'$ and $m\in \cS(\alpha')$ such that $m\neq k$. Let $\OO'_m\subset \Spec(\CC[{\OO'}])$ be a codimension $2$ orbit corresponding to $m$. Suppose that $\OO$, $\OO_m\subset \fg_{n}$ are birationally induced from $\{0\times \OO'\}$, $\{0\times \OO'_m\}\subset \fg\fl_k\times \fg_{n-2k}$ respectively. Then the singularities of $\OO_m\subset\Spec(\CC[\OO])$ and $\OO'_m\subset \Spec(\CC[{\OO}'])$ are equivalent. 
	\end{lemma}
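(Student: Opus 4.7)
The plan is to transfer the computation of the codimension $2$ singularity of $\OO_m$ in $X:=\Spec(\CC[\OO])$ to the corresponding computation in $X':=\Spec(\CC[\OO'])$ via the generalized Springer morphism. Let $P\subset G$ be the parabolic with Levi having Lie algebra $\fg\fl_k\times\fg_{n-2k}$, and set $Y:=G\times_P(X'\times\fn)$. Since $Y$ is normal, the Springer morphism $\mu\colon Y\to\overline{\OO}$ lifts through the normalization $X\to\overline{\OO}$ to a projective birational Poisson morphism $\rho\colon Y\to X$; birationality comes from the hypothesis that $\OO$ is birationally induced from $\{0\}\times\OO'$. Set $Y_m:=G\times_P(\OO'_m\times\fn)\subset Y$.

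The first step is to argue that $\rho$ is \'etale at a generic point of $Y_m$. A direct dimension count gives $\dim Y_m=2\dim\fn+\dim\OO'_m=\dim\OO_m$, so $\mu|_{Y_m}$ is generically finite, and the assumption that $\OO_m\leftarrow\{0\}\times\OO'_m$ is birational upgrades this to birationality of $\mu|_{Y_m}$ onto the codimension $2$ orbit $\OO_m\subset\overline{\OO}$. Combined with finiteness of $X\to\overline{\OO}$, this forces the fiber of $\rho$ over a generic point of the appropriate component of the preimage of $\OO_m$ in $X$ to be a single reduced point. Since $\rho$ is projective birational and $X$ is normal, Zariski's Main Theorem then yields that $\rho$ is a local isomorphism at a generic point $\tilde{x}=[1,e',v]\in Y_m$ with $e'\in\OO'_m$.

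Next, the bundle structure $Y\to G/P$ decomposes the formal neighborhood as $(Y)^{\wedge}_{\tilde{x}}\cong \hat{D}_{G/P}\times (X')^{\wedge}_{e'}\times \hat{D}_{\fn}$, with smooth formal disks coming from $G/P$ and $\fn$. Factoring out the tangent directions to the symplectic leaves $\OO'_m\subset X'$ and $\OO_m\subset X$, and invoking the \'etale isomorphism $\rho$ from the previous step, yields an identification $\Sigma\times \hat{D}^{N}\cong \Sigma'\times \hat{D}^{N}$, where $\Sigma$ and $\Sigma'$ are the $2$-dimensional formal slices to $\OO_m$ in $X$ and $\OO'_m$ in $X'$ (the exponents $N$ match by dimension count). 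Cancelling the smooth factors, which is legitimate since both slices are $2$-dimensional Du Val singularities or smooth, gives $\Sigma\cong \Sigma'$, proving the lemma. The main obstacle is the \'etaleness step: in the Kraft-Procesi case (e) the preimage of a codimension $2$ orbit under $X\to\overline{\OO}$ can split into several components, so one must carefully track which component of the preimage of $\OO_m$ in $X$ is hit by $\rho(Y_m)$ and verify that $\rho$ is one-to-one onto this component near a generic point rather than factoring through a nontrivial covering.
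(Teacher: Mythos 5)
Your proposal is correct and follows essentially the same route as the paper: both pass to $G\times_P(\Spec(\CC[\OO'])\times\fn)$, use the birational-induction hypothesis to see that this space is isomorphic to $\Spec(\CC[\OO])$ near the relevant component of the preimage of $\OO_m$, and then use the fiber-bundle structure over $G/P$ to identify the two-dimensional formal slices of $\OO_m$ and $\OO'_m$. Your extra care with Zariski's Main Theorem and with tracking the component of the preimage in case (e) only makes explicit what the paper's argument leaves implicit.
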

	\begin{proof}
		
	Let $\beta\in \cP(n)$ and $\beta'\in \cP(n-2k)$ be the partitions  corresponding to the orbits $\OO_m$ and $\OO'_m$ respectively. By \cref{KP} it is enough to show that the minimal degenerations of $(\alpha, \beta)$ and $(\alpha', \beta')$ give the same singularities. We say that $(\alpha, \beta)$ is of type (a)-(e) if the corresponding minimal degeneration is as in the respective case of \cref{KP}. We note that since we work with the normalization $\Spec(\CC[\OO])$ of $\overline{\OO}$, the minimal degeneration of $(\alpha, \beta)$ of type (e) leads to a singularity of type $A$.

        By \cref{induction}, either $\alpha=\alpha'^k$ or $\alpha$ is the collapse of $\alpha'^k$, and $\alpha'$ is very even. It implies that $\alpha_i=\alpha'_i+2$ if $i<k$, and $\alpha_i=\alpha'_i$ for $i>k+1$. Analogously, $\beta_i=\beta'_i+2$ if $i<k$, and $\beta_i=\beta'_i$ for $i>k+1$. Thus, the minimal degenerations coincide unless one of the following conditions holds.
        \begin{itemize}
            \item[(i)] $k=m-1$, and $(\alpha', \beta')$ is of type (d);
            \item[(ii)] $k=m-1$, and $(\alpha', \beta')$ is of type (e);
            \item[(iii)] $k=m+1$, and $(\alpha', \beta')$ is of type (c);
            \item[(iv)] $k=m+1$, and $(\alpha', \beta')$ is of type (e).
        \end{itemize}

        Let us fix notation $d'_m=\left[\frac{\alpha'_m-\alpha'_{m-1}}{2}\right]$. We analyze each case separately. 
        \begin{itemize}
            \item[(i)] Note that $\alpha'_{k}=\alpha'_{k+1}$ is of the same parity as $\fg$. Then \cref{induction} implies that $\fg=\fs\fo_n$, and $\alpha'$ is a very even orbit. However, $\alpha'_{k+2}$ is of different parity to $\fg$. We get a contradiction.
            \item[(ii)] Arguing as above, $\fg=\fs\fo_n$, and $\alpha'$ is a very even orbit. Direct computation shows that the minimal degeneration of $(\alpha, \beta)$ is $(2d'_m+1), (2d'_m-1, 1,1)$. Both minimal degenerations of $(\alpha, \beta)$ and $(\alpha', \beta')$ lead to the singularity of type $A_{2d'_m-1}$.
            \item[(iii)] $\alpha'_k$ is of the same parity as $\fg$. Then $\fg=\fs\fo_n$, and $\alpha'$ is a very even orbit. But $\alpha'_{k-1}$ is of the opposite parity to $\fg$. We get a contradiction.
            \item[(iv)] As above, $\fg=\fs\fo_n$, and $\alpha'$ is a very even orbit. Direct computation shows that the minimal degeneration of $(\alpha, \beta)$ is $(2d'_m+1, 2d'_m+1), (2d'_m, 2d'_m, 2)$. Both minimal degenerations of $(\alpha, \beta)$ and $(\alpha', \beta')$ lead to the singularity of type $A_{2d'_m-1}$.
        \end{itemize}

        To finish the proof we need to show that $\OO_m$ is a union of two very even orbits if and only if $\OO'_m$ is a union of two very even orbits.

        First, assume that $\beta$ is a very even partition. Then by \cref{induction} $\beta=\beta'^k$, and therefore all members of $\beta'$ must be even, thus $\beta'$ is a very even partition.

        Now assume that $\beta'$ is very even. If $\beta=\beta'^k$, it is very even. It remains to prove that $\beta=\beta'^k$. Assume the opposite, then $\beta'_{k-1}>\beta'_{k}=\beta'_{k+1}>\beta'_{k+2}$. We note that if $(\alpha', \beta')$ is of types (b)-(e), then $\beta'$ has members of the opposite parity to $\fg$, and thus cannot be very even. Therefore, $(\alpha', \beta')$ is of type (a), and $\alpha'$ is not very even. Since $\beta'_m=\beta'_{m+1}$, we have $m\neq k-1$, $k+1$. Thus,  $\alpha'_{k-1}=\beta_{k-1}>\alpha'_{k}=\beta'_k=\alpha'_{k+1}$, and $\alpha'^k\notin \cP(n)$. \cref{induction} implies that $\OO$ is not birationally induced from $\OO'$. We get a contradiction. 

	\end{proof} 

	The second lemma we need helps us to control the monodromy action of $\pi_1(\OO_m)$ on $\widetilde{\fP}_m$.
	\begin{lemma}\label{monodromy}
		Let $\OO'\subset \fg_{n-2k}$ be a nilpotent orbit, $\OO'_m\subset \overline{\OO}'$ be a codimension $2$ nilpotent orbit, $\widehat{\OO}'$ be a $G$-equivariant cover of $\OO'$, and $\mu': \Spec(\CC[\widehat{\OO}'])\to \fg$ be the moment map. Let $\widehat{\OO}'_m$ be a connected component of $\mu'^{-1}(\OO'_m)\subset \Spec(\CC[\widehat{\OO}'])$. Let $\widehat{\OO}$ be a cover of $\OO$ birationally induced from $\{0\}\times \widehat{\OO}'$ and the Levi subalgebra $\fl=\fg\fl_k\times \fg_{n-2k}\subset \fg$, and $\OO_m\subset \overline{\OO}$ be the orbit induced from $\{0\}\times \OO'_m\subset \fl$. Let $\widehat{\OO}_m\subset \Spec(\CC[\widehat{\OO}])$ be a connected component of the preimage $\mu^{-1}({\OO}_m)$ under the moment map $\mu: \Spec(\CC[\widehat{\OO}])\to \fg$. Suppose that $\widehat{\OO}_m$ is birationally induced from $\{0\}\times \widehat{\OO}'_m$ with the Levi subalgebra $\fl$. Moreover, assume that the singularity of $\widehat{\OO}_m\subset \Spec(\CC[\widehat{\OO}])$ is equivalent to the one of $\widehat{\OO}'_m\subset \Spec(\CC[\widehat{\OO}'])$.
%		, so we can identify the corresponding partial Namikawa spaces $\widetilde{\fP}_m\simeq \widetilde{\fP'}_m\simeq \widetilde{V}$
		If the monodromy action of $\pi_1(\widehat{\OO}'_m)$ on $\widetilde{\fP'}_m$ is non-trivial, then the action of $\pi_1(\widehat{\OO}_m)$ on $\widetilde{\fP}_m$ is non-trivial. 
		
%		The birational induction of $\widehat{\OO'}_m$ from $\widehat{\OO}_m$ identifies $\pi_1^G(\widehat{\OO}_m)$ with a subgroup of $\pi_1^{G_{n+2k}}(\widehat{\OO'}_m)$. Under this identification the two actions of $\pi_1^G(\widehat{\OO}_m)$ on $\widetilde{V}$ coincide.
	\end{lemma}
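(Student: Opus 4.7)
The plan is to construct a group homomorphism $\phi:\pi_1(\widehat{\OO}'_m)\to \pi_1(\widehat{\OO}_m)$ such that, under the natural identification $\widetilde{\fP}_m\simeq \widetilde{\fP'}_m$ coming from the assumed equivalence of singularities, the monodromy action of $\phi(\gamma)$ on $\widetilde{\fP}_m$ agrees with that of $\gamma$ on $\widetilde{\fP'}_m$. Once such a $\phi$ is in place, non-triviality of the $\pi_1(\widehat{\OO}'_m)$-action on $\widetilde{\fP'}_m$ immediately forces $\phi(\pi_1(\widehat{\OO}'_m))\subset \pi_1(\widehat{\OO}_m)$ to act non-trivially on $\widetilde{\fP}_m$, giving the conclusion.

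First, I would construct $\phi$. Consider the $G$-equivariant birational morphism $\rho:\widetilde{X}:=G\times_P(\Spec(\CC[\widehat{\OO}'])\times \fn)\to \Spec(\CC[\widehat{\OO}])$ coming from the birational induction of $\widehat{\OO}$ from $\{0\}\times \widehat{\OO}'$. The hypothesis that $\widehat{\OO}_m$ is birationally induced from $\{0\}\times \widehat{\OO}'_m$ with the same Levi $\fl$ means that the locally closed subvariety $G\times_P(\widehat{\OO}'_m\times \fn)\subset \widetilde{X}$ contains an open subset mapping isomorphically onto an open dense subset of $\widehat{\OO}_m$ under $\rho$. The assignment $\eta\mapsto \rho([1:(\eta,0)])$ then realizes an open subset of $\widehat{\OO}'_m$ as a locally closed subvariety of $\widehat{\OO}_m$, and yields a homomorphism $\phi:\pi_1(\widehat{\OO}'_m)\to \pi_1(\widehat{\OO}_m)$.

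Second, I would verify that $\phi$ intertwines the monodromy representations on the transversal slices. Rerunning the argument of \cref{singind} in a family, the formal slice to $\widehat{\OO}_m$ in $\Spec(\CC[\widehat{\OO}])$ at the point $\rho([1:(\eta,0)])$ is identified, via the fiber bundle $\widetilde{X}\to G/P$, with the formal slice to $\widehat{\OO}'_m\times \fn$ in $\Spec(\CC[\widehat{\OO}'])\times \fn$ at $(\eta,0)$, which in turn coincides with the formal slice to $\widehat{\OO}'_m$ at $\eta$. These identifications are canonical and vary continuously with $\eta$, so they glue into an isomorphism between the local system of formal slices over an open subset of $\widehat{\OO}'_m$ and the pullback along $\phi$ of the local system of formal slices over $\widehat{\OO}_m$. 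Passing to the minimal resolutions and then to $H^2$, the representation $\pi_1(\widehat{\OO}_m)\to \Aut(\widetilde{\fP}_m)$ pulls back under $\phi$ to the representation $\pi_1(\widehat{\OO}'_m)\to \Aut(\widetilde{\fP'}_m)$.

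The main technical obstacle is precisely this naturality claim: one must check that the equivalence of singularities provided by \cref{singind} is not merely a pointwise isomorphism but assembles into an isomorphism of local systems of transversal slices along the map $\widehat{\OO}'_m\to \widehat{\OO}_m$ constructed above. This is where the explicit description of the equivalence in terms of the fibration $\widetilde{X}\to G/P$ is essential, since it exhibits the equivalence as coming from a single family over an open subset of $\widetilde{X}$. Once this compatibility is in hand, non-triviality of the monodromy on $\widetilde{\fP'}_m$ descends via $\phi$ to non-triviality on $\widetilde{\fP}_m$, completing the proof.
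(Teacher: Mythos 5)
There is a genuine gap, and it occurs at the very first step: the homomorphism $\phi:\pi_1(\widehat{\OO}'_m)\to\pi_1(\widehat{\OO}_m)$ you try to build does not exist in that direction. Your proposed map $\eta\mapsto\rho([1:(\eta,0)])$ does not land in $\widehat{\OO}_m$: the moment map sends $[1:(\eta,0)]$ to $\eta$ itself, viewed as a nilpotent element of $\fl\subset\fg$, and the $G$-orbit of such an element is strictly smaller than the induced orbit $\OO_m$ (it lies in $\overline{\OO}_m\setminus\OO_m$; compare the Springer resolution $G\times_B\fn\to\mathcal{N}$, where the zero section maps to $0$, not into the regular orbit). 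The points of $G\times_P(\widehat{\OO}'_m\times\fn)$ that do map to $\widehat{\OO}_m$ are those lying over the \emph{open} $P$-orbit in $\OO'_m\times\fn$, i.e.\ pairs $(\eta,v)$ with $v\in\fn$ generic, and there is no canonical section of the projection from that open $P$-orbit back to $\widehat{\OO}'_m$. Consequently no open subset of $\widehat{\OO}'_m$ embeds into $\widehat{\OO}_m$, and the intertwining of local systems you describe in the second paragraph has nothing to be pulled back along.

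The correct map goes the other way: as in \cref{pi1g} and \cref{o2}, the inclusion of the open $G$-orbit $\widehat{\OO}_m$ into $G\times_P(\widehat{\OO}'_m\times\fn)$ (whose complement has real codimension at least $2$, and whose base $G/P$ is simply connected) induces a \emph{surjection} $\pi_1(\widehat{\OO}_m)\to\pi_1(\widehat{\OO}'_m)$. This is what the paper uses: given $\gamma\in\pi_1(\widehat{\OO}'_m)$ acting non-trivially on the set of irreducible components of the exceptional fiber of the minimal resolution of $Y'=\widehat{\OO}'\cup\widehat{\OO}'_m$, one chooses a preimage $\widetilde\gamma\in\pi_1(\widehat{\OO}_m)$ and checks, via the commutative diagram relating the minimal resolution of $Y=\widehat{\OO}\cup\widehat{\OO}_m$ to $G\times_P(\widetilde{Y}'\times\fn)$ fibered over $G/P$, that $\widetilde\gamma$ permutes the components of the corresponding exceptional fiber non-trivially. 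The logical skeleton of your argument (non-triviality transfers along an intertwining homomorphism, in either direction) is fine, and your instinct that the essential point is the compatibility of the slice identification of \cref{singind} with monodromy is exactly right; but to make it work you must replace your ill-defined section by the surjection above and lift $\gamma$ rather than push it forward.
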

	\begin{proof}
		Set $X=\Spec(\CC[\widehat{\OO}])$, $X'=\Spec(\CC[\widehat{\OO}'])$. Consider the map $\rho: Z=G\times^P(X'\times \fn)\to X$. Since $\widehat{\OO}_m$ is birationally induced from $\{0\times \widehat{\OO}'_m\}$, we have $\widehat{\OO}_m \subset G\times^P(\widehat{\OO}'_m\times \fn)\subset Z$. It follows that the singularity to $\widehat{\OO}_m$ in $Z$ is isomorphic to the singularity of $\widehat{\OO}'_m$ in $X'$. By the assumption of the lemma, the map $\rho$ is birational over $\widehat{\OO}\cup \widehat{\OO}_m\subset X$. 
  
  We have the following commutative diagram, where all maps are $G$-equivariant embeddings, and the middle horizontal map is an isomorphism.
		
		\begin{equation*}\label{emb}
			\xymatrix{
				& \widehat{\OO} \ar[d]\ar[rr]&& G\times^P(\widehat{\OO}'\times \fn) \ar[d]& \\
				&{Z}\ar[rr]&& G\times^P({X}'\times \fn)&\\
				& \widehat{\OO}_m\ar[rr]\ar[u]&& G\times^P(\widehat{\OO}'_m\times \fn)\ar[u]&}.
		\end{equation*}
		Analogously to \cref{o2}, the bottom map in the diagram induces a surjective map $\phi_m: \pi_1(\widehat{\OO}_m)\to \pi_1(\widehat{\OO}'_m)$, corresponding to the induction. 
    By the discussion above, there is a point $x=(x',n)\in \widehat{\OO}'_m \cap \widehat{\OO}'_m\times \fn$, such that the formal slice to $\widehat{\OO}'_m\subset X'$ at the point $x'$ is isomorphic to the formal slice to $\widehat{\OO}_m\subset X$ at the point $x$.
  
  Let $\widetilde{X}$ and $\widetilde{X}'$ be $\QQ$-factorial terminalizations of $X$ and $X'$ respectively, such that $\widetilde{X}\simeq G\times^P (\widetilde{X}'\times \fn)$. We note that results of \cite{Namikawa} imply that $\widetilde{X}$ and $\widetilde{X}'$ are uniquely defined over $\widehat{\OO}\cup \widehat{\OO}_m$ and $\widehat{\OO}'\cup \widehat{\OO}'_m$ respectively. Let us denote the partial resolution maps $\widetilde{X}\to X$, $\widetilde{X}'\to X'$ by $\rho_X$ and $\rho_{X'}$ respectively, and let $E_{\widetilde{X}}$ and $E_{\widetilde{X}'}$ be the corresponding exceptional divisors. We note that $E_{\widetilde{X}}$ and $E_{\widetilde{X}'}$ are closed under the action of $G$ and $L$ respectively. We have a commutative diagram, analogous to the one above.
		 	\begin{equation*}\label{emb2}
		 	\xymatrix{
		 		& \widehat{\OO} \ar[d]\ar[rr]&& G\times^P(\widehat{\OO}'\times \fn) \ar[d]& \\
		 		&\widetilde{X}\ar[rr]&& G\times^P(\widetilde{X}'\times \fn)&\\
		 		&\rho_X^{-1} (\widehat{\OO}_m) \ar[rr] \ar[u]&& G\times^P(\rho_{X'}^{-1} (\widehat{\OO}'_m) \times \fn)\ar[u]&}.
			\end{equation*}
	 
   %The map $\widetilde{Y}\to G\times^P(\widetilde{Y}'\times \fn)$ can be understood in the following way. Let $\widetilde{X}$ and $\widetilde{X}'$ be the $\QQ$-terminalizations of $X$ and $X'$ respectively constructed as in \cref{P=z(l)}. Thus, we have an embedding $i: \widetilde{X}^{reg}\subset G\times^P(\widetilde{X}'\times \fn)$. We have $\widetilde{Y}\subset \widetilde{X}^{reg}$, and the embedding from the commutative diagram is obtained by restricting $i$ to $\widetilde{Y}$.
	 	
	 	%Let us denote the resolution maps $\widetilde{Y}\to Y$, $\widetilde{Y}'\to Y'$ by $\rho_Y$ and $\rho_{Y'}$ respectively, and the embedding $Y\to G\times^P({Y}'\times \fn)$ by $i$. 
   Consider $x\in \widehat{\OO}_m$ and $x'\in \widehat{\OO}'_m$ as above. By the assumption of the lemma, we have $\widetilde{\fP}_m\simeq \widetilde{\fP}'_m$. Therefore the fibers $\rho_X^{-1}(x)$ and $\rho_{X'}^{-1}(x')$ are identified with the exceptional locus of the minimal resolution of the corresponding Kleinian singularities, that is a set of projective lines. 
	 	 
	 	Let $\gamma'\in \pi_1(\widehat{\OO}'_m, x')$ be an element such that $\gamma'$ acts non-trivially on the set of connected components of $\rho^{-1}(x')$. Pick $\gamma\in \phi_m^{-1}(\gamma)'\subset \pi_1(\widehat{\OO}_m, x)$. Then the action of $\widetilde{\gamma}$ intertwines the connected components of the fiber of the resolution map $f: \widetilde{X}=G\times^P(\widetilde{X}'\times \fn)\to G\times^P({X}'\times \fn)$ over the point $(1, x+n)$. We note that $\rho: \widetilde{X}\to X$ factors through $G\times^P({X}'\times \fn)$, and $(1, x+n)$ maps to $x$. Therefore ${\gamma}$ acts non-trivially on the connected components of $\rho_{X}^{-1}(x)$, so the action of $\pi_1(\widehat{\OO})$ on $\widetilde{\fP}_m$ is non-trivial.
%		Let $\widetilde{X}$ be a $\QQ$-terminalization of $X$, and set $\widetilde{X'}=G\times^P(\widetilde{X}\times \fn)$. Then $\widetilde{X'}$ is a $\QQ$-terminalization of $X'$. Let us denote the natural maps $\widetilde{X}\to X$  and $\widetilde{X'}\to X'$ by $\pi$ and $\pi'$ correspondigly. Let $x'\in \widehat{\OO'}_m$ be a point, and let $y'\in \widetilde{X'}$ be a preimage of $x'$. 
	\end{proof}
	\begin{proofl}
            Let $\fl_0$, $\beta$, and $\fl$ be as in \cref{prel} and \cref{l} respectively. First, assume that $\cS(\alpha)=\{m\}$. 
		
		1) If $\alpha$ is not special at any $k$, then $\fP_0=H^2(\OO,\CC)=0$ by \cref{coh}, and therefore $\fP=\fP_m=\fz(\fg\fl_m^{k_m}\times \fg_{n-2mk_m})=\fz(\fg\fl_m^{k_m})$. By \cref{l}, $k_m=d_m+1$ if $\beta$ is special at $m$, and $k_m=d_m$ otherwise. The former is possible if and only if $\alpha_m$ and $\alpha_{m+1}$ are the only two odd members of $\alpha$, both appearing with multiplicity $1$. It implies the identifications of the proposition.
        
        Suppose that $d_m\ge 2$ and denote the corresponding Kleinian singularity by $S$. By \cref{KP}, either $S=A_{2d_m-1}$ and $\widetilde{\fP}_m=\CC^{2d_m-1}$ or $S=D_{4(d_m-1)}$ and $\widetilde{\fP}_m=\CC^{d_m+1}$. If $k_m=d_m$, the monodromy action must be non-trivial since $2d_m-1\ge d_m+1>d_m$. If $k_m=d_m+1$, then the singularity of $\OO_m\subset \Spec(\CC[\OO])$ is of type $D_{m+1}$, and the monodromy action is trivial. 
		
		%Then \cref{KP} and \cite{Namikawa} imply that $\pi_1(\OO_m)$ acts non-trivially on $\widetilde{\fP}_m$. 
		
		2) If $\alpha$ is special at some $k$ then by \cref{induction} the orbits $\OO$ and $\OO_m$ can both be birationally induced from orbits $\OO'$ and $\OO'_m$ in $\fl=\fg\fl_k\times \fg_{n-2k}$ respectively. 
		Since $\alpha_k=\alpha_{k+1}$, we have $m\neq k$. We note that $\alpha$ being special at $k$ implies that $\beta$ is special at $k$, and therefore $\beta$ cannot be special at $m$. By \cref{singind}, the corresponding spaces $\widetilde{\fP}_m$ and $\widetilde{\fP}'_m$ are identified. If $d_m\ge 2$, then the action of $\pi_1(\OO'_m)$ is nontrivial by 1). By \cref{monodromy}, the action of $\pi_1(\OO_m)$ on $\widetilde{\fP}_m$ is nontrivial. If $d_m=1$, then $\widetilde{\fP}_m=\widetilde{\fP}'_m=\CC$, and the actions of $\pi_1(\OO_m)$ and $\pi_1(\OO'_m)$ are both trivial. Therefore, $\fP_m$ can be identified with $\fP'_m$. The latter one is identified with $\fz(\fg\fl_m^{d_m})$. We have $\fP=\fz(\fg\fl_k\times \fg\fl_m^{d_m}\times \fg_{n-2md_m-2k})=\fP_m\oplus \fz(\fg\fl_k)$. On the other hand, $\fP=H^2(\OO,\CC)\oplus \fP_m$, so $H^2(\OO,\CC)$ is identified with $\fz(\fg\fl_k)$.
		
		Suppose that we have proved Proposition for all $\OO=\OO_{\gamma}$ such that $|\cS(\gamma)|\le k$, and let $|\cS(\alpha)|=k+1$. Choose any $l\in \cS(\alpha)$, $l\neq m$. Then both orbits $\OO$ and $\OO_m$ are birationally induced from orbits $\OO'$ and $\OO'_m$ in $\fg\fl_l^{d_l}\times \fg_{n-2ld_l}$. If $\alpha'$ is the partition corresponding to $\OO'$, we have $\cS(\alpha')=\cS(\alpha)\setminus\{l\}$. We proceed analogously to 2) to show that $\fP_m=\fP'_m$.  By the induction step the latter can be identified with $\fz(\fg\fl_m^{k_m})$, where $k_m=d_m$ or $d_{m}+1$ depending on the partition $\alpha'$. Since both identifications $\fP_m=\fP'_m$ and $\widetilde{\fP}_m=\widetilde{\fP}'_m$ hold, we conclude that the symmetries coincide. We finish with observing that $\alpha_m$ and $\alpha_{m+1}$ are the only two odd members of $\alpha$, both appearing with multiplicity $1$ if and only if the same is true for $\alpha'_m$ and $\alpha'_{m+1}$.
		% Then \cref{monodromy} finishes the induction step.  
	\end{proofl}

        We note that during the proof we obtain an explicit description of all codimension $2$ singularities together with symmetries. \cref{theorem_symmetry} is a direct corollary of results in this section.
   
	\subsection{Partial Namikawa spaces for $G$-equivariant covers}\label{Pcover}
	In this section we generalize the results of \cref{Porbit} for $G$-equivariant covers of nilpotent orbits. Let us fix some notations.
	
	Let $\widehat{\OO}$ be a $G$-equivariant cover of a nilpotent orbit $\OO\subset \fg$, and $\alpha\in \cP(n)$ be the partition corresponding to $\OO$. Let $\mu: \Spec(\CC[\widehat{\OO}])\to \Spec(\CC[\OO])$ be the map associated with the covering map $\widehat{\OO}\to \OO$, and consider $m\in \cS(\alpha)$. In this section we use the notation $\OO_m$ for the orbit in $\Spec(\CC[\OO])$ instead of the orbit in $\overline{\OO}$. That plays a role when $\overline{\OO}$ has a dimension $2$ singularity of type $A_{2k-1}\cup A_{2k-1}$, i.e. for the pair $(\OO, \OO_m)$ with the minimal degenerations of type (e). Under the normalization it becomes two copies of the singularity of type $A_{2k-1}$, the computation of partial Namikawa spaces done in the previous section shows that the singular point of both copies lie on the same symplectic leaf. Similar to above, we set $\OO_m=\widehat{\OO}_m^I\sqcup \widehat{\OO}_m^{II}$ for $\OO_m$ corresponding to a very even partition. We remark that if $\OO_m$ corresponds to a very even partition, $\alpha$ is a partition of type $D$ with only two odd members. \cref{pi_1} implies that $\pi_1(\OO)=1$, and thus it is already studied in the previous section. For the remainder of this section we assume that $\OO_m$ does not correspond to a very even partition, and is therefore a single orbit. 
 
    Since we are interested in the singularities of the normal scheme $\Spec(\CC[\widehat{\OO}])$, it is more convenient for us to take $\OO_m$ to be the codimension $2$ orbit for the trivial cover of $\OO$. We say that the cover $\widehat{\OO}_m=\mu^{-1}(\OO_m)$ of $\OO_m$ is the cover associated to the cover $\widehat{\OO}\to \OO$. If $\widehat{\OO}_m\not\subset \Spec(\CC[\widehat{\OO}])^{reg}$, every connected component $\fL_m^i$ of $\widehat{\OO}_m$ is a symplectic leaf of codimension $2$ in $\Spec(\CC[\widehat{\OO}])$, and we can assign to it the corresponding Namikawa space $\fP_m^i$. We set $\fP_m=\bigoplus \fP_m^i$, and call it the partial Namikawa space corresponding to $\widehat{\OO}_m$. If $\widehat{\OO}_m\subset \Spec(\CC[\widehat{\OO}])^{reg}$ we define $\fP_m=0$.
	
	 The following is the main result of this section.
	\begin{theorem}\label{4.4}
		Let $\widehat{\OO}$ be a cover of a nilpotent orbit $\OO\subset \fg$, and $\alpha\in \cP(n)$ be the partition corresponding to $\OO$. Consider $m\in \cS(\alpha)$, and let $\widehat{\OO}_m\subset \Spec(\CC[\widehat{\OO}])$ be the associated cover of $\OO_m$.
		
%		\begin{itemize}
%			\item[1)] 
			Let $\widehat{\OO}'$ be a birationally rigid cover of $\OO'\subset \fl$, such that $\widehat{\OO}$ is birationally induced from $\widehat{\OO}'$. Suppose that 
			\begin{equation*}
				\fl=\prod_{m\in \cS(\alpha)}\fg\fl_m^{k_m}\times \prod_{k\in S} \fg\fl_k\times \fg_{n'}
			\end{equation*} for some set $S$, such that $S\cap \cS(\alpha)=\emptyset$. Then under the identification $\fP=\fz(\fl)$ of \cref{P=z(l)}
			\begin{itemize}
				\item The partial Namikawa space $\fP_m$ corresponding to $\widehat{\OO}_m$ is identified with $\fz(\fg\fl_m^{k_m})$;
				\item The smooth part of Namikawa space $\fP_0=H^2(\widehat{\OO}, \CC)$ is identified with $\fz(\prod_{k\in S} \fg\fl_k)$.
			\end{itemize} 
%			\item[2)]  There are covers $\widehat{\OO}'$, $\widehat{\OO}'_m\subset \Spec(\CC[\widehat{\OO}'])$ of orbits $\OO'$, $\OO'_m\subset \fl$ for some $\fl\subset \fg$, such that:
%			\begin{itemize}
%				\item[i. ] $\widehat{\OO}$ and $\widehat{\OO}_m$ are birationally induced from $\widehat{\OO}'$ and $\widehat{\OO}'_m$ correspondingly. Moreover, the singularity of $\widehat{\OO}_m\subset \Spec(\CC[\widehat{\OO}])$ is equivalent to the singularity of $\widehat{\OO}'_m\subset \Spec(\CC[\widehat{\OO}'])$;
%				\item[ii.] Connected components of $\widehat{\OO}'_m$ are the only codimension $2$ singular symplectic leaves of $\Spec(\CC[\widehat{\OO}'])$;
%				\item[iii.] $H^2(\widehat{\OO}',\CC)=0$.
%			\end{itemize}
%		\end{itemize} 
	\end{theorem}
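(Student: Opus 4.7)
The plan is to extend the inductive argument in the proof of \cref{pifroml} to $G$-equivariant covers. The ambient identification $\fP = \fz(\fl)$ comes from \cref{P=z(l)}, and together with Namikawa's decomposition $\fP = H^2(\widehat{\OO},\CC) \oplus \bigoplus_m \fP_m$ (\cref{P=P}), the task is to identify the claimed subspaces of $\fz(\fl)$ with each Namikawa summand.

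I would induct on the total number of general linear factors in $\fl$. The base case is when $\widehat{\OO}$ is birationally rigid, so $\fl = \fg_n$; here \cref{Q-term} gives $\fP = 0$ and both sides of the claim vanish. For the inductive step, I would factor the birational induction by peeling off a single $\fg\fl_l$-factor, writing $\widehat{\OO}$ as birationally induced from an intermediate cover $\check{\OO}$ in $\fg\fl_l \times \fg_{n-2l}$, where $\check{\OO}$ is itself birationally induced from $\widehat{\OO}'$ via the remaining factors inside $\fg_{n-2l}$. The induction hypothesis then applies to $\check{\OO}$ viewed as a cover of its orbit in $\fg_{n-2l}$, and one splits into two cases according to whether $l$ lies in $\cS(\alpha)$ or in $S$. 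For each $m \in \cS(\alpha)$ with $m \neq l$, \cref{singind} identifies the formal slice at $\widehat{\OO}_m$ in $\Spec(\CC[\widehat{\OO}])$ with the formal slice at $\check{\OO}_m$ in $\Spec(\CC[\check{\OO}])$, and \cref{monodromy} lifts non-trivial monodromy; combined with the induction hypothesis, this gives $\fP_m \cong \fz(\fg\fl_m^{k_m})$ inside $\fz(\fl)$.

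For the freshly added $\fz(\fg\fl_l)$-direction, I would argue as follows. If $l \in \cS(\alpha)$, the induction step is birational (case 1 of \cref{induction}), $H^2$ is preserved, and the new $\CC$ extends the $\fz(\fg\fl_l^{k_l-1})$ coming from $\check{\OO}$ to $\fz(\fg\fl_l^{k_l}) = \fP_l$. If $l \in S$, the step falls into case 2 of \cref{induction} and is non-birational on the orbit level, so \cref{o2} yields $\dim H^2(\widehat{\OO},\CC) = \dim H^2(\check{\OO},\CC) + 1$, forcing the new $\CC$ into the smooth part $\fP_0$, which cumulatively becomes $\fz(\prod_{k\in S}\fg\fl_k)$. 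The main obstacle is controlling the monodromy precisely when $k_m \geq 2$: \cref{monodromy} provides non-triviality of the action but not a direct description of its invariants, so one must pin down $\widetilde{\fP}_m^{\pi_1(\widehat{\OO}_m)}$ indirectly, using the dimension equality $\dim \fP = \dim \fz(\fl)$ together with the inductive identification of all other partial spaces and of $\fP_0$. The surjectivity of $\phi$ in \cref{pi1g} and the cover-theoretic compatibility expressed in \cref{bircov} are needed to ensure the bookkeeping closes up consistently.
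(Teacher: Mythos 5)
Your overall strategy is the same as the paper's: peel off one $\fg\fl_l$-factor of $\fl$ at a time, show that for $m\neq l$ the codimension-$2$ slice and its monodromy are unchanged by the induction step, track where the new direction $\fz(\fg\fl_l)$ lands ($\fP_l$ versus $\fP_0$), and close the bookkeeping with the equality $\dim\fP=\dim\fz(\fl)$ from \cref{P=z(l)} against the decomposition of \cref{P=P}. This is exactly how the paper proceeds (via \cref{together}, \cref{lnotm}, \cref{lem}, \cref{h2}, \cref{contractsing}), so the architecture is sound. Two remarks on where your outline falls short of a proof.

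First, the genuine gap: you assert that when $l\in\cS(\alpha)$ the induction step ``preserves $H^2$'' because the orbit-level Springer map is birational. This is not a formal consequence of birationality. We have $H^2(\widehat{\OO},\CC)=(\fz^*)^{\pi_1^G(\widehat{\OO})}$, and both the centre $\fz$ of the reductive centralizer and the $\pi_1^G$-action on it change under induction: the copies of $\fs\fo_2$ in $\fz$ are indexed by parts of multiplicity exactly $2$ of appropriate parity, and adding $2$ to the first $l$ rows can create or destroy such parts (e.g.\ a part of multiplicity $3$ straddling row $l+1$ becomes one of multiplicity $2$). The actual content here is \cref{h2}: after inducing through all the $\cS(\alpha)$-factors one must show $H^2(\widehat{\OO}_1,\CC)=0$, and the paper's argument is that a nonzero invariant $\fs\fo_{2,k}\subset(\fz^*)^{\pi_1^G(\widehat{\OO}_1)}$ forces $\pi_1^G(\widehat{\OO}_1)\subseteq\Gamma_k$, hence by \cref{bircov} a birational induction from $\fg\fl_k\times\fg_{r-2k}$ with $k\notin\cS$, contradicting the uniqueness statement \cref{restrict}. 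You cite \cref{bircov} as ``needed for bookkeeping'' but never make this argument; without it the dimension count does not close, since you would only have the inequalities $\fP_m\subseteq\fz(\fg\fl_m^{k_m})$ and an unknown $\dim\fP_0$.

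Second, a smaller but real omission: \cref{singind} is a statement about orbits, not covers. To compare the slice at $\widehat{\OO}_m\subset\Spec(\CC[\widehat{\OO}])$ with the slice at $\check{\OO}_m\subset\Spec(\CC[\check{\OO}])$ you must first know that the preimage $\mu^{-1}(\OO_m)$ (which is how $\widehat{\OO}_m$ is defined in the statement) coincides with the cover of $\OO_m$ birationally induced alongside $\widehat{\OO}$, and that the degrees of the two coverings match. This is the content of the first two bullets of \cref{lnotm}, proved in the paper by a connectedness argument on formal neighborhoods (\cref{together}) and an induction on the degree of the cover; it does not follow from \cref{singind} alone.
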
 
	\begin{rmk}
		The numbers $k_m$ and the set $S$ depend on the cover $\widehat{\OO}$ of $\OO$. The latter can be understood on a level of partitions as a set that controls all colapses on the way from the partition $\alpha'$ of $\OO'$ to the partition $\alpha$. Each $\fg\fl_l$ for $l\in S$ occurs only once in $\fl$.
	\end{rmk}
	%We prove an important \cref{one leaf} that says that we can reduce the study of the singularity of $\widehat{\OO}_m\subset \Spec(\CC[\widehat{\OO}])$ to the case when connected components of $\widehat{\OO}_m$ are the only symplectic leaves of codimension $2$. Here $\widehat{\OO}$ is a $G$-equivariant cover of $\OO$, $\OO_m\subset \overline{\OO}$ is a codimension $2$ nilpotent orbit, and $\widehat{\OO}_m\subset \Spec(\CC[\widehat{\OO}])$ is the corresponding cover. 
	Let us explain how we are going to prove the theorem. First, we need to show that the birational induction from the Levi algebra $\fl=\fg\fl_k\times \fg_{n-2k}$ does not change the singularity of number $m$ for $m\neq k$. This is done in \cref{lnotm} using and generalizing \cref{singind}. In addition, that allows us to control the corresponding Namikawa space $\fP_m$. \cref{h2} allows us to control the smooth part $\fP_0$ of the Namikawa space. Therefore, we can describe the change of the Namikawa space $\fP$ under the birational induction from $\fl=\fg\fl_k\times \fg_{n-2k}$, and thus get the result of the theorem.
	
	First, we need to consider a case when $\widehat{\OO}$ is birationally induced from an orbit in $\fl=\fg\fl_k\times \fg_{n-2k}$ rather than a cover of it.

%	\begin{lemma}\label{top}
%		Let $f: X\to Y$ be a map of locally compact Hausdorff spaces. Let $U\subset X$ be open, and suppose that $f(U)\subset Y$ is open. Moreover, suppose that $f(X-U)=Y-f(U)$, and $f|_{X-U}: X-U\to Y-f(U)$ is a $2$-fold covering. Then for any point $y\in {Y-f(U)}$ there exists an open neighborhood $V\subset Y$, such that $f^{-1}(V)$ is disconnected.
%	\end{lemma}
%	\begin{proof}
%		Suppose that for all open neighborhoods $V\subset Y$ of $y$, $f^{-1}(V)$ is connected. Let $x_1$, $x_2$ be the preimages of $y$, and let $W_1$, $W_2\subset X$ be open neighborhoods of $x_1$ and $x_2$ respectively, such that $W_1\cap W_2=\emptyset$. Set $Z=X-W_1-W_2$. Since $f^{-1}(V)$ is connected, we have $f^{-1}(V)\cap Z\neq \emptyset$ for all open neighborhoods $V$ of $y$. 
%		
%		Since $Y$ is locally compact, we have a collection of compact neighborhoods $K_i\subset Y$ of $y$, such that $K_i\supset K_{i+1}$, $f^{-1}(K_i) \cap Z\neq \emptyset$ for all $K_i$, and $\bigcap K_i=y$. Therefore $f^{-1}(y)\cap Z=\bigcap(K_i\cap Z)\neq \emptyset$ by Cantor's intersection theorem. So we have a point $z\in Z$, such that $f(z)=y$. But $z\notin U$, because $y\notin f(U)$, and $z\notin X-U$, because $f|_{X-U}: X-U\to Y-f(U)$ is a $2$-fold covering, and $z\neq x_1$, $x_2$. So we get a contradiction. 
%	\end{proof}
	
	\begin{lemma}\label{together}
		Let $\OO'\subset \fg_{n-2k}$ be a nilpotent orbit, and $\OO'_m\subset \Spec(\CC[\OO'])$ be the codimension $2$ orbit with the number of singularity $m=q(\OO', \OO'_m)\neq k$. Let $\widehat{\OO}$, $\widehat{\OO}_m$ be the covers of orbits in $\fg_{n}$, such that $\widehat{\OO}$, $\widehat{\OO}_m$ are birationally induced from the Levi subalgebra $\fl=\fg\fl_k\times \fg_{n-2k}$ and orbits $\{0\}\times {\OO}'$, $\{0\}\times {\OO}'_m$ respectively. Let $\mu: \Spec(\CC[\widehat{\OO}])\to \Spec(\CC[\OO])$ be the map associated to the covering map $\widehat{\OO}\to \OO$. Then:
		\begin{itemize}
			%degrees of the coverings $\widehat{\OO}\to \OO$ and $\widehat{\OO}_m\to \OO_m$ are equal;
			\item[(i)] We have $\widehat{\OO}_m\simeq \mu^{-1}(\OO_m)\subset \Spec(\CC[\widehat{\OO}])$.
                \item[(ii)] The map $\mu$ is \'etale over $\OO\cup \OO_m$. 
			\item[(iii)] The singularities of $\widehat{\OO}_m\subset \Spec(\CC[\widehat{\OO}])$ and $\OO'_m\subset \Spec(\CC[\OO'])$ are equivalent to each other.
		\end{itemize} 
%	the degrees of both covers are equal, $\widehat{\OO}_m\simeq \mu^{-1}(\OO_m)\subset \Spec(\CC[\widehat{\OO}])$, and the singularities of $\widehat{\OO}_m\subset \Spec(\CC[\widehat{\OO}])$ and $\OO'_m\subset \Spec(\CC[\OO'])$ are equivalent to each other.
	\end{lemma}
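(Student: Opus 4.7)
The plan is to transplant the argument of \cref{singind} to the covered setting via the birational $G$-equivariant morphism $\pi\colon \widetilde{X}:= G\times_P(\Spec(\CC[\OO'])\times \fn) \to X := \Spec(\CC[\widehat{\OO}])$ constructed as in the proof of \cref{P=z(l)} applied to the trivial cover of $\OO'$. Since $\widetilde{X}$ is a smooth fiber bundle over $G/P$ with fiber $\Spec(\CC[\OO'])\times \fn$, the transverse geometry near a $G$-orbit in $\widetilde{X}$ is controlled by the corresponding $P$-orbit in the fiber, modulo the smooth factor $G/P \times \fn$.

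For the first bullet, by \cref{o2} both $\widehat{\OO}\to \OO$ and $\widehat{\OO}_m\to \OO_m$ have degree $1$ or $2$, and the degree equals $2$ exactly when the associated generalized Springer morphism from $\fl = \fg\fl_l\times \fg_{n-2l}$ fails to be birational. By \cref{induction}, this birationality is determined by whether the shifts $\alpha'^l$ and $\beta'^l$ lie in $\cP(n)$, together with the type $D$ all-even-parts exception applied to $\alpha'$ and $\beta'$ respectively. Because the minimal degeneration between $\alpha'$ and $\beta'$ is supported at row $m \neq l$, the rows of $\alpha'$ and $\beta'$ in positions $1,\ldots,l$ coincide, and a case-by-case check across the degeneration types (a)--(e) shows that the parity conditions governing $\alpha'^l\in \cP(n)$ and $\beta'^l \in \cP(n)$ are simultaneously satisfied, and similarly the all-even-parts condition holds for both or for neither. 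Hence the two Springer morphisms are birational together, yielding equal degrees.

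For the second bullet, $\pi$ restricts on the $G$-orbit $G\times_P(\OO'_m\times \fn)\subset \widetilde{X}$ to a $G$-equivariant map into $X$ whose image is a $G$-invariant cover of $\OO_m$; this image coincides with $\widehat{\OO}_m$ by the defining property of birational induction of $\widehat{\OO}_m$ from $\{0\}\times \OO'_m$. Since $\mu$ is finite of degree $\deg(\widehat{\OO}\to \OO)$, which by the first bullet equals $\deg(\widehat{\OO}_m\to \OO_m)$, the total degree of $\mu^{-1}(\OO_m)$ over $\OO_m$ cannot exceed $\deg(\widehat{\OO}_m\to \OO_m)$, so the inclusion $\widehat{\OO}_m\hookrightarrow \mu^{-1}(\OO_m)$ forced by $\pi$ must be an equality.

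Finally, for the third bullet, as in \cref{singind} the birationality of $\pi$ combined with the bundle structure $\widetilde{X}\to G/P$ identifies the formal slice to $\widehat{\OO}_m$ in $X$ at a generic point with the formal slice to $\OO'_m \times \fn$ in $\Spec(\CC[\OO'])\times \fn$, which after stripping the smooth factor $\fn$ equals the formal slice to $\OO'_m$ in $\Spec(\CC[\OO'])$. I expect the main obstacle to be the combinatorial verification in bullet one for degeneration types (c) and (e), where $\beta'$ acquires additional rows absent from $\alpha'$ and a careful parity accounting near row $l$ is needed to confirm that the birationality of Springer induction from $\fl$ is preserved in both directions.
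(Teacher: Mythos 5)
Your overall strategy---pushing the slice computation of \cref{singind} through the birational morphism $\pi\colon G\times_P(\Spec(\CC[\OO'])\times\fn)\to\Spec(\CC[\widehat{\OO}])$ and the bundle structure over $G/P$---is the same as the paper's, and your treatment of the first bullet (a combinatorial check via \cref{induction} that the two generalized Springer morphisms are birational simultaneously) is a legitimate way to justify the degree statement that the paper extracts rather tersely from \cref{o2}, though you assert the case-by-case parity verification rather than carry it out. The genuine gap is in the second bullet. You claim that $\pi$ restricted to the orbit $G\times_P(\OO'_m\times\fn)=\widehat{\OO}_m$ gives an \emph{inclusion} $\widehat{\OO}_m\hookrightarrow\mu^{-1}(\OO_m)$ ``by the defining property of birational induction.'' But birational induction only describes how this orbit sits over $\OO_m\subset\fg$ via the moment map $\mu'=\mu\circ\pi$; it says nothing about the restriction of $\pi$ itself, which is a $G$-equivariant map of homogeneous spaces and hence a covering onto its image that could a priori be $2$-to-$1$. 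Concretely, when $\widehat{\OO}\to\OO$ has degree $2$, the scenario you must exclude is that $\pi$ collapses the two sheets of $\widehat{\OO}_m$, so that $\mu^{-1}(\OO_m)$ is a single orbit isomorphic to $\OO_m$ with $\mu$ ramified along it and formal slice $(\CC^2/\widehat{\Gamma})^{\wedge_0}$ for an index-$2$ subgroup $\widehat{\Gamma}\subset\Gamma$. Your degree count cannot rule this out, because the inclusion it relies on is precisely the statement at issue.

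The paper closes this gap with a separate argument: if $\mu^{-1}(\OO_m)\simeq\OO_m$, then the smooth locus of the formal neighborhood $\mu^{-1}(U)$ is connected, whereas the smooth locus of $\mu'^{-1}(U)$ is disconnected (it is a disjoint union of two formal neighborhoods, since $\mu'^{-1}(\OO_m)$ is the genuine $2$-fold cover $\widehat{\OO}_m$); since $\pi$ restricts to an isomorphism on smooth loci, this is a contradiction. You need this argument, or a substitute for it, before your third bullet can go through: identifying the slice to $\widehat{\OO}_m$ in $\Spec(\CC[\widehat{\OO}])$ with the slice to $\OO'_m$ in $\Spec(\CC[\OO'])$ uses that $\mu$ is \'etale of degree $2$ over a neighborhood of $\OO_m$, which is exactly what fails in the collapsed scenario.
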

	\begin{proof}
        
        Let us fix notations $X'=\Spec(\CC[{\OO}'])$, $X=\Spec(\CC[\OO])$, $\widehat{X}=\Spec(\CC[\widehat{\OO}])$, and pick a $\QQ$-factorial terminaliziation $\widetilde{X}'$.
        
        If $\OO$ is birationally induced from $\OO'$, then $\OO_m\subset \Spec(\CC[\OO])$, and we are done by \cref{singind}. Thus, by \cref{o2} we can assume that $\widehat{\OO}$ is a $2$-fold cover of $\OO$.
        
	   We claim that $\widehat{\OO}_m\to \OO_m$ is a $2$-fold cover. That together with (i) implies (ii). Let $\alpha$, $\beta$, $\alpha'$, and $\beta'$ be the partitions corresponding to $\OO$, $\OO_m$, $\OO'$, and $\OO'_m$ respectively. By \cref{o2} and \cref{induction}, $\widehat{\OO}\to \OO$ is not birational if and only if $\alpha$ is a collapse of $\alpha^{'k}$, and either $\alpha'$ is not very even, or $l$ is odd. 

        First, consider the case when $\beta'$ is very even. As shown in the proof of \cref{singind}, $(\alpha', \beta')$ is of type (a). It follows that $\OO_m$ is a trivial cover of the nilpotent orbit corresponding to the partition $\beta$. Since $k\neq m$, $\alpha$ is a collapse of $\alpha^{'k}$ if and only if $\beta$ is a collapse of $\beta^{'k}$. We have $\OO'_m=\OO_m^{'I} \sqcup \OO_m^{'II}$, and therefore $\widehat{\OO}_m$ is a union of two disjoint copies of $\OO_m$. 
  
	    The image of the generalized Springer morphism $G\times^P(X'\times \fn)\to \fg$ is $\overline{\OO}$, so the map factors through $\mu': G\times^P(X')\times \fn)\to X$. Recall from \cref{P=z(l)} that there is a map $\rho: G\times^P(X'\times \fn)\to \widehat{X}$. We want to show that $\mu^{-1}(\OO_m)=\widehat{\OO}_m$. We have the following commutative diagram.
		\begin{equation*}\label{co}
		\xymatrix{
			G\times^P(\widetilde{X}'\times \fn) \ar[r] &G\times^P(X'\times \fn) \ar[dr]_{\mu'} \ar[rr]^{\rho}&& \widehat{X}\ar[dl]^{\mu}& \\
			&& X&&}
		\end{equation*}
		
		Let $\Sigma_X^{\wedge_0}\simeq (\CC^2/\Gamma)^{\wedge_0}$ be the formal slice to $\OO_m$ in $X$, $V$ be the formal neighborhood of a point $x$ in $\OO_m$, and $U=V\times \Sigma_X^{\wedge_0}$ be a formal neighborhood of a point $x$ in $X$. Let $\Sigma_X=\CC^2/\Gamma_X$, and $\widetilde{\Sigma}$ be its minimal resolution. Similarly, considering the formal slice to $\OO'_m$ in $X'$ we define $\Gamma_{X'}$, $\Sigma_{X'}$, and $\widetilde{\Sigma}_{X'}$. In what follows we say that $\Sigma_X$ is the singularity of $\OO_m$ in $X$.
		
		First, suppose that $\OO_m\xrightarrow{\sim}\mu^{-1}(\OO_m)\subset \widehat{X}$. Then the formal slice to $\OO_m$ in $\widehat{X}$ is $\Sigma_{\widehat{X}}^{\wedge_0}$, where $\Sigma_{\widehat{X}}=\CC^2/\Gamma_{\widehat{X}}$ for some subgroup $\Gamma_{\widehat{X}}\subset \Gamma$ of index $2$, and $\mu^{-1}(U)\simeq V\times \Sigma_{\widehat{X}}^{\wedge_0}$. Let $\widetilde{\Sigma}_{\widehat{X}}$ be the minimal resolution of $\Sigma_{\widehat{X}}$.
		
		The preimage $\mu'^{-1}(\OO_m)\subset G\times^P (X'\times \fn)$ contains the open $G$-orbit in $G\times^P (\Spec(\CC[\OO'_m])\times \fn)$, that is $\widehat{\OO}_m$. Therefore, $\mu'^{-1}(U)$ contains a disjoint union of formal neighborhoods to $x_1$, $x_2\in \mu'^{-1}(x)\cap \widehat{\OO}_m$. By the argument of \cref{monodromy}, the formal slices at points $x_1$, $x_2$ are $\Sigma_{X'}^{\wedge_0}$. 

        The composition $G\times^P(\widetilde{X}'\times \fn) \to \widehat{X}$ is a $\QQ$-factorial terminalization, and therefore it gives a minimal resolution of $\Sigma_{\widehat{X}}$. Thus, $\rho^{-1}(\Sigma_{\widehat{X}})$ is a partial resolution of a Kleinian singularity $\Sigma_{\widehat{X}}$. It follows that $2\dim H^2(\widetilde{\Sigma}_{X'})\le \dim H^2(\widetilde{\Sigma}_{\widehat{X}})$. It remains to check that this is not possible. 
        
        Assume first that the minimal degenerations of $(\alpha, \beta)$ and $(\alpha', \beta')$ coincide. If $\Sigma_X=\Sigma_{X'}$ is not of type $D$, then $\dim H^2(\widetilde{\Sigma}_{\widehat{X}})< \dim H^2(\widetilde{\Sigma}_X)$, and therefore $2\dim H^2(\widetilde{\Sigma}_{X'})< \dim H^2(\widetilde{\Sigma}_X)$, we get a contradiction. Assume that $\Gamma$ is a dihedral group on $4t$ elements, and $\Gamma_{\widehat{X}}$ is a cyclic subgroup of order $2t$. Then $\dim H^2(\widetilde{\Sigma}_{\widehat{X}})=2t-1$, and $\dim H^2(\widetilde{\Sigma}_X)=t+2$. Thus, $2t+4\le 2t-1$, we get a contradiction. 

        Arguing as in the proof of \cref{singind}, we may assume that $m-1\le k\le m+2$, and we are in one of the following cases. 

         \begin{itemize}
            \item[(i)] $k=m-1$, and $(\alpha', \beta')$ is of type (d);
            \item[(ii)] $k=m-1$, and $(\alpha', \beta')$ is of type (e);
            \item[(iii)] $k=m+1$, and $(\alpha', \beta')$ is of type (c);
            \item[(iv)] $k=m+1$, and $(\alpha', \beta')$ is of type (e).
        \end{itemize}

        In all $4$ cases we have $\dim H^2(\widetilde{\Sigma}_{X'})=2t-1$, where $t=\left[\frac{\alpha'_m-\alpha'_{m+1}}{2}\right]$. A direct computation for each of these cases shows that

        \begin{itemize}
            \item[(i)]$(\alpha, \beta)$ is of type (b). $\Sigma_X$ is of type $D_{t+2}$;
            \item[(ii)] $(\alpha, \beta)$ is of type (c). $\Sigma_X$ is of type $A_{2t-1}$;
            \item[(iii)] $(\alpha, \beta)$ is of type (b). $\Sigma_X$ is of type $D_{t+2}$;
            \item[(iv)] $(\alpha, \beta)$ is of type (d). $\Sigma_X$ is of type $A_{2t-1}$.
        \end{itemize}

            In all cases $\dim H^2(\widetilde{\Sigma}_{\widehat{X}})\le 2t-1$ for all possible $\Gamma_{\widehat{X}}\subset \Gamma$, and we get a contradiction.
	\end{proof}
	
		We proceed to obtain the general case from the lemma.
	
	\begin{prop}\label{lnotm}\label{le}
		Let $k$, $\fl$, $\OO'$, $\OO'_m$ be as in \cref{together}, $\widehat{\OO}'$ be a $G_{n-2k}$-equivariant cover of $\OO'$, and $\mu': \Spec(\CC[\widehat{\OO}'])\to \Spec(\CC[\OO'])$ be the natural map associated with the cover. Let $\widehat{\OO}'_m=\mu'^{-1}(\OO_m)\subset \Spec(\CC[\widehat{\OO}'])$ be the corresponding cover of $\OO'_m$ (possibly disconnected). Let $\widehat{\OO}$, $\widehat{\OO}_m$ be covers birationally induced from $\fl$ and covers $\{0\}\times \widehat{\OO}'$, $\{0\}\times \widehat{\OO}'_m$. Let $\mu: \Spec(\CC[\widehat{\OO}])\to \Spec(\CC[\OO])$ be the natural covering map. Then:
		\begin{itemize}
			\item[(i)]  $\widehat{\OO}_m\xrightarrow{\sim}\mu^{-1}(\OO_m)\subset \Spec(\CC[\widehat{\OO}])$;
			\item[(ii)]  The singularity of every connected component of $\widehat{\OO}_m$ in $\Spec(\CC[\widehat{\OO}])$ is equivalent to the singularity of every connected component of $\widehat{\OO}'_m$ in $\Spec(\CC[\widehat{\OO}'])$;
			\item[(iii)]  Let $\fP_m$, $\fP'_m$ be the partial Namikawa spaces corresponding to $\widehat{\OO}_m$ and $\widehat{\OO}'_m$ respectively. (If $\widehat{\OO}_m\subset \Spec(\CC[\widehat{\OO}])^{reg} $, we set $\fP_m=0$.) Then $\fP_m\subset \fP'_m$. 
		\end{itemize} 
	\end{prop}
	\begin{proof}
		To show that $\widehat{\OO}_m\subset \Spec(\CC[\widehat{\OO}])$ we use the induction on the degree of the cover $ \widehat{\OO}'\to \OO'$. The cover of degree $1$ is considered in \cref{together}. 
		
		Let $\check{\OO}'$ be an $G_{n-2k}$-equivariant cover of $\OO'$, and $\check{\OO}'_m$ be the associated cover of $\OO'_m$. Let $\check{\OO}$ and $\check{\OO}_m$ be covers of orbits $\OO$ and $\OO_m$ that are birationally induced from $\check{\OO}'$ and $\check{\OO}'_m$ respectively. Let $\check{X}=\Spec(\CC[\check{\OO}])$, $\check{X}'=\Spec(\CC[\check{\OO}'])$, and let $\check{\mu}$ stand for the natural map $\check{X}\to X=\Spec(\CC[\OO])$. Moreover, suppose that $\check{\OO}_m\xrightarrow{\sim} \check{\mu}^{-1}(\OO_m)\subset \check{X}$. 
		
		Let $\widehat{\OO}'\to \check{\OO}'$ be a $G$-equivariant $2$-fold cover. Let $\widehat{X}=\Spec(\CC[\widehat{\OO}])$, $\widehat{X}'=\Spec(\CC[\widehat{\OO}'])$, and let $\widehat{\OO}'_m\subset \widehat{X}$ be the associated cover of $\OO'_m$. We will prove that $\widehat{\OO}_m\xrightarrow{\sim} \mu^{-1}(\OO_m)\subset \widehat{X}$. That would complete the induction step for (i).
		
		%Suppose that $\widehat{\OO}'\to \check{\OO}'$ is a $2$-fold cover, set $\check{\OO}'_m\subset \Spec(\CC[\check{\OO}'])$ be the corresponding cover. Moreover, let $\check{\OO}$, $\check{\OO}_m$ be the covers birationally induced from $\check{\OO}'$, $\check{\OO}'_m$ correspondingly. By the induction hypothesis, $\check{\OO}_m\subset \Spec(\CC[\check{\OO}])$. 
		We have the following commutative diagram. The maps $f$ and $f'$ are naturally induced from the covers $\widehat{\OO}\to \check{\OO}$ and $\widehat{\OO}'\to \check{\OO}'$ respectively. The maps $\widehat{\rho}$ and $\check{\rho}$ are constructed analogously to $\rho$ in \cref{P=z(l)}.
		\begin{equation*}\label{coco}
		\xymatrix{
			&G\times^P(\widehat{X}'\times \fn) \ar[d]^{\widehat{\rho}} \ar[r]^{f'}& G\times^P(\check{X}'\times \fn) \ar[d]^{\check{\rho}} & \\
			&\widehat{X}\ar[r]^{f}& \check{X}&}
		\end{equation*}
		
		We will need the following lemma:
		\begin{lemma}\label{sur}
			The morphisms $\widehat{\rho}$ and $f$ are surjective. 
		\end{lemma}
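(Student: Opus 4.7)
The plan is to deduce each surjectivity from the combination of properness with density of the image.

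For $f:\widehat{X}\to X$, the covering $\widehat{\OO}\to\check{\OO}$ is finite of degree $2$, and the rings $\CC[\check{\OO}]$ and $\CC[\widehat{\OO}]$ are finitely generated and normal by \cref{sympl_sing}. Thus $\CC[\widehat{\OO}]$ is the integral closure of $\CC[\check{\OO}]$ in a degree-$2$ extension of fraction fields, and since finitely generated $\CC$-algebras are Japanese, this integral closure is a finite module. Hence $f$ is finite, so proper and closed; its image contains $\check{\OO}=f(\widehat{\OO})$, which is dense in $X$, and therefore $f$ is surjective.

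For $\widehat{\rho}:G\times_P(\widehat{X}'\times\fn)\to \widehat{X}$, the strategy is to factor the generalized Springer moment map $\mu:G\times_P(\widehat{X}'\times\fn)\to \fg$ as $\overline{\mu}\circ\widehat{\rho}$, where $\overline{\mu}:\widehat{X}\to\fg$ is the moment map of $\widehat{X}$. Properness of $\mu$ follows from two observations: the moment map $\widehat{X}'\to\fl$ is finite (the same Japanese argument gives finiteness of $\widehat{X}'\to \Spec\CC[\check{\OO}']$, and $\Spec\CC[\check{\OO}']\to \overline{\check{\OO}'}\subset\fl$ is a finite normalization), so $G\times_P(\widehat{X}'\times\fn)\to G\times_P\fp$ is proper; and $G\times_P\fp\to\fg$ is proper because $G/P$ is projective. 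Since $\overline{\mu}$ is separated (its target is affine), the standard fact that ``if $g\circ h$ is proper and $g$ is separated then $h$ is proper'' forces $\widehat{\rho}$ to be proper. Its image is closed and contains the dense open orbit $\widehat{\OO}\subset\widehat{X}$ (which sits in the image because $\widehat{\OO}$ is birationally induced from $\widehat{\OO}'$), and therefore $\widehat{\rho}$ is surjective.

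The main obstacle, and the only point requiring care, is verifying that $\widehat{\rho}$ genuinely lands in $\widehat{X}=\Spec\CC[\widehat{\OO}]$ rather than in some larger affinization. Following the codimension argument in the proof of \cref{P=z(l)}, the complement of $\widehat{\OO}$ in the normal source $G\times_P(\widehat{X}'\times\fn)$ has codimension $\ge 2$: this is because $\widehat{X}'$ has symplectic singularities (so its singular locus has codimension $\ge 2$), and $\widehat{\OO}'\times\fn$ is dense in $\widehat{X}'\times\fn$ with complement of codimension $\ge 2$. This estimate yields $\CC[G\times_P(\widehat{X}'\times\fn)]\simeq\CC[\widehat{\OO}]$ by Hartogs' lemma, which is exactly the factorization needed to produce $\widehat{\rho}$. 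Once this is in place, the properness and density arguments above complete the proof. Notably, the argument does not require $\widehat{\OO}'$ to be birationally rigid, which is important since in the inductive setup of the surrounding proof $\widehat{\OO}'$ is merely an intermediate cover.
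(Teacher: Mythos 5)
Your proof is correct and follows essentially the same route as the paper: for $f$ you use finiteness of the integral extension $\CC[\check{\OO}]\subset\CC[\widehat{\OO}]$ (the paper invokes Going Up, which is the same point), and for $\widehat{\rho}$ you establish properness and combine it with dominance, exactly as the paper does when it observes that $\widehat{\rho}$ is a proper birational (partial resolution) morphism. Your extra details --- factoring the moment map to justify properness and the Hartogs/codimension argument for why $\widehat{\rho}$ lands in $\Spec(\CC[\widehat{\OO}])$ --- only make explicit what the paper leaves to the construction in Corollary~\ref{P=z(l)}.
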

		\begin{proof}
			The map $\widehat{\rho}$ is by construction a partial resolution of singularities. A birational proper map is surjective.
			
			The map $f$ is the morphism of affine scheme induced from the integral extension $\CC[\check{\OO}]\to \CC[\widehat{\OO}]$. The Going Up theorem implies the surjectivity.
		\end{proof}
		
		Let us consider two cases:
		
		1) $\widehat{\OO}'_m=\check{\OO}'_m$. Therefore $\widehat{\OO}_m=\check{\OO}_m$. Let $\mathring{\OO}_m\subset\widehat{X}$ be the corresponding cover of $\OO_m$. Map $f$ is $G$-equivariant by construction and surjective by \cref{sur}, and therefore induces the cover $\mathring{\OO}_m\to \check{\OO}_m$. We have $\widehat{\OO}_m=f'^{-1}\check{\rho}^{-1}(\check{\OO}_m)$. Thus, $\widehat{\OO}_m=\widehat{\rho}^{-1}f^{-1}(\check{\OO}_m)=\widehat{\rho}^{-1}(\mathring{\OO}_m)$.
		
		Since $\widehat{\rho}$ is surjective, it induces a cover $\widehat{\OO}_m\to \mathring{\OO}_m$. It follows that both covers are isomorphisms.
		
		2) $\widehat{\OO}'_m$ is a $2$-fold cover of $\check{\OO}'_m$. Let $p=\check{\rho}\circ f'$. The map $p$ restricted to $\widehat{\OO}_m$ is a $2$-fold cover. If $f^{-1}(\check{\OO}_m)$ is a $2$-fold cover of $\check{\OO}_m$, then $\widehat{\rho}$ gives an isomorphism $f^{-1}(\check{\OO}_m)\simeq \widehat{\OO}_m$. Assume that $f^{-1}(\check{\OO}_m)\simeq \check{\OO}_m$. We can proceed similarly to \cref{together}. 

        Let $\Sigma_{\check{X}}=\CC^2/\Gamma_{\check{X}}$ be the singularity of $\check{\OO}_m$ in $\check{X}$. By the induction hypothesis, the singularity of $\check{\OO}'_m$ in $\check{X}'$ is isomorphic to $\Sigma_{\check{X}}$. The singularity of $f^{-1}(\check{\OO}_m)$ in $\widehat{X}$ is $\Sigma_{\widehat{X}}=\CC^2/\Gamma_{\widehat{X}}$ for a subgroup $\Gamma_{\widehat{X}}\subset \Gamma_{\check{X}}$ of index $2$. Let $\Sigma_{\widehat{X}'}=\CC^2/\Gamma_{\widehat{X}'}$ be the singularity of $\widehat{\OO}'_m$ in $\widehat{X}'$, and $\Gamma_{\widehat{X}'}\subset \Gamma_{\check{X}}$ is a subgroup of index $2$. As in \cref{together}, if $\widehat{\OO}_m\neq f^{-1}(\check{\OO}_m)$ then $2\dim H^2(\widetilde{\Sigma}_{\widehat{X}'})\le \dim H^2(\widetilde{\Sigma}_{\widehat{X}})$. However, since both $\Gamma_{\widehat{X}}$ and $\Gamma_{\widehat{X}'}$ are subgroups of index $2$ in $\Gamma_{\check{X}}$, and $\Sigma_{\widehat{X}'}$ appears in the partial resolution of $\Sigma_{\widehat{X}}$, it follows that $\Sigma_{\widehat{X}}\simeq \Sigma_{\widehat{X}'}$. We got a contradiction. (i) and (ii) follow.

		It remains to prove that for the corresponding partial Namikawa spaces $\fP_m$ and $\fP'_m$ we have $ \fP_m\subset \fP'_m$. Since the singularity of $\widehat{\OO}_m$ in $ \widehat{X}$ is equivalent to the one of $\widehat{\OO}'_m$ in $ \widehat{X}'$, we have $\widetilde{\fP}_m=\widetilde{\fP}'_m$. If the action of $\pi_1(\widehat{\OO}'_m)$ on $\widetilde{\fP}'_m$ is nontrivial, then \cref{monodromy} implies that $\fP_m=\fP'_m$. If $\fP'_m=\widetilde{\fP}'_m$, the statement is obvious.  
		%			We use the induction on the degree of cover $\widehat{\OO}'$ of $\OO'$.
		%			
		%			Suppose that $\widehat{\OO}_L=\OO_L$. Consider maps $\pi: G\times^P(\Spec(\CC[\OO'])\times n)\to \overline{\OO}$ and $\pi_m: G\times^P(\Spec(\CC[\OO'_m])\times n)\to \overline{\OO}_m$. By \cref{induction} and \cref{o2} the maps $\pi^{-1}(\OO)\to \OO$ and $\pi_m^{-1}(\OO_m)\to \OO_m$ are either both homeomorphisms or both $2$-fold covers. If the induction of $\OO$ from $\OO'$ is birational, lemma follows from \cref{singind}. Suppose that bo 
	\end{proof}
	Note that by construction $\widehat{\OO}_m$ has the same number of connected components as $\widehat{\OO}'_m$. By applying \cref{le} multiple times we get the following corollary.
 
	\begin{cor}\label{lem}
		Suppose that $\widehat{\OO}$ is birationally induced from the birationally rigid cover $\widehat{\OO}_L$ of an orbit $\OO_L\subset \fl=\prod \fg\fl_i^{k_i}\times \fg_{n'}$, and let $\alpha$ be the partition corresponding to the orbit $\OO$. Then for any $m\in S(\alpha)$ we have $\fP_m\subset \fz(\fg\fl_m^{k_m})$. 
	\end{cor} 
	\begin{proof}
		Since $\widehat{\OO}_L$ is birationally rigid, we have $\OO_L\subset \fg_{n'}$. Let $\widehat{\OO}'$ be a cover of an orbit in $\fg_{n'+2mk_m}$ that is birationally induced from $\fg\fl_m^{k_m}\times \fg_{n'}$. Consider $\OO'_m\subset \overline{\OO}'$ to be the orbit with the $n(\OO', \OO'_m)=m$, and let $\widehat{\OO}'_m\subset \widehat{X}'=\Spec(\CC[\widehat{\OO}'])$ be the corresponding cover of $\OO'_m$. Let $\fP(\widehat{X}')$ and $\fP_m (\widehat{X}')$ be the Namikawa space of $\widehat{X}'$ and the (direct sum of) partial Namikawa space(s) corresponding to (connected components of) $\widehat{\OO}'_m$ respectively. By \cref{le} we have $ \fP_m\subset \fP_m (\widehat{X}')$. Also, we have $\fP_m (\widehat{X}')\subset \fP (\widehat{X}')$, and by \cref{P=z(l)} $\fP (\widehat{X}')=\fz(\fg\fl_m^{k_m}\times \fg_{n'})=\fz(\fg\fl_m^{k_m})$. Combining, we get the statement of the corollary.
	\end{proof}
	\begin{prop}\label{h2}
		Suppose that $\widehat{\OO}$ is birationally induced from a birationally rigid cover $\widehat{\OO}_L$, where $\widehat{\OO}_L$ is a cover of $\OO_L\subset \fl$. Let $\alpha\in \cP(n)$ be the partition corresponding to the orbit $\OO$. Recall from \cref{lll} that $\fl$ is $G$-conjugate to $\prod_{i\in \cS(\alpha)} \fg\fl_i^{k_i}\times \prod_{i\in S} \fg\fl_i\times \fg_{t}$, where $S$ is some finite set, $S\cap \cS(\alpha)=\emptyset$ and $t=n-2\sum_{i\in \cS(\alpha)} ik_i-2\sum_{i\in S} i$. Then $\fP_0=H^2(\widehat{\OO}, \CC)\subset \fP$ is identified with $\CC^{|S|}=\fz(\prod_{i\in S} \fg\fl_i)\subset \fz(\fl)$. 
	\end{prop}
	\begin{proof}
		Since $\widehat{\OO}_L$ is birationally rigid for the Levi $\fl=\prod_{i\in \cS(\alpha)} \fg\fl_i^{k_i}\times \prod_{i\in S} \fg\fl_i\times \fg_{t}$, $\widehat{\OO}_L$ is a cover of an orbit in $\fg_t$.
		
		Recall from \cref{LS} that the orbit $\OO$ can be induced from $\{0\}\times \OO'\subset \fg\fl_k\times \fg_{n-2k}$ for $k\notin \cS(\alpha)$ if and only if $\alpha$ is the collapse of $\alpha'^k$, where $\alpha'$ is the partition corresponding to $\OO'$. Then $\alpha_{k-1}>\alpha_k=\alpha_{k+1}>\alpha_{k+2}$, and $\alpha_k$ has the opposite parity to $\fg$. Let $\check{\OO}$ be the cover of $\OO$ birationally induced from $\OO'$. By \cref{bircov} a cover $\widehat{\OO}$ is birationally induced from some cover $\widehat{\OO}'$ of $\OO'$ if and only if $\pi_1^G(\widehat{\OO})\subset \pi_1^G(\check{\OO})$. Let $X=\Spec(\CC[\widehat{\OO}])$, and $X'=\Spec(\CC[\widehat{\OO}'])$. Let $\fP (X)$ and $\fP (X')$ be the Namikawa spaces for $X$ and $X'$ respectively. 
		
		Recall from the proof of \cref{Q-term} that $H^2(X^{reg}, \CC)=H^2(\widehat{\OO},\CC)$. Analogously to \cref{o2}, $\dim H^2(\widehat{\OO}, \CC)=\dim H^2(\widehat{\OO}',\CC)+1$. Let Levi subalgebra $\fl'\subset \fg_{n-2k}$ and a birationally rigid cover $\widehat{\OO}_{L'}$ of an orbit $\OO_{L'}\subset \fl'$ be such that $\widehat{\OO}'$ is birationally induced from $\widehat{\OO}_L$. Then $\widehat{\OO}$ is birationally induced from $\{0\}\times \widehat{\OO}_{L'}$ with the Levi subalgebra $\fg\fl_k\times \fl'$, and we have $\fP (X)=\fz(\fg\fl_k\times \fl)=\fz(\fg\fl_k)\oplus \fP' (X')$. Recall that by \cref{le} $\dim \fP_m (X)\le \dim \fP_m (X')$ for all $m\in \cS(\alpha)$. 		
		Therefore $\dim \fP_m (X)=\dim \fP_m (X')$ for all $m\in \cS(\alpha)$, and $H^2(\widehat{\OO},\CC)=H^2(\widehat{\OO}',\CC)\oplus\fz(\fg\fl_k)$.
		
		Set $r=n-2\sum_{i\in S} i$, and let $\widehat{\OO}_1$ be the cover of an orbit ${\OO}_1\subset \fg_{r}$ that is birationally induced from $\{0\}\times\ldots\times \{0\}\times \widehat{\OO}_L\subset \prod_{i\in \cS(\alpha)} \fg\fl_i^{k_i}\times \fg_t$. Then $H^2(\widehat{\OO},\CC)=H^2(\widehat{\OO}_1,\CC)\oplus\bigoplus_{i\in S}\fz(\fg\fl_i)$. Therefore, it is enough to show that $H^2(\widehat{\OO}_1,\CC)=0$. 
		
		Let $\beta$ be the partition corresponding to $\OO_1\subset \fg_r$. Note that by \cref{restrict} $\widehat{\OO}_1$ cannot be birationally induced from any Levi subalgebra of form $\fg\fl_k\times \fg_{r-2k}$ for $k\not\in \cS(\beta)$.
		
		Suppose that $H^2(\widehat{\OO}_1, \CC)\neq 0$. Recall from \cref{coh} that $H^2(\widehat{\OO}_1, \CC)=(\fz^*)^{\pi_1(\widehat{\OO}_1)}$, where $\fz$ is the center of the Lie algebra of the reductive part $\QQ$ of the stabilizer of an element $x\in \widehat{\OO}_1$. Since $\fq$ is the sum of some copies of $\fs\fo_a$ and $\fs\fp_b$, $\fz$ is a sum of copies of $\fs\fo_2$ corresponding to the pairs ($m$, $k$), such that $m$ and $\fg$ are of the opposite parity, and $\beta_{k-1}>\beta_k=m=\beta_{k+1}>\beta_{k+2}$. We denote this copy of $\fs\fo_2$ by $\fs\fo_{2,k}$. Suppose that $\fs\fo_{2,k}\subset H^2(\widehat{\OO}_1, \CC)$. Then $\pi_1^G(\widehat{\OO}_1)$ acts on the $\fs\fo_{2,k}$ trivially, so $\pi_1^G(\widehat{\OO}_1)\subset \Gamma_k$, where $\Gamma_k$ is defined as in \cref{birLS}. By \cref{bircov}, $\widehat{\OO}_1$ is birationally induced from a cover of a nilpotent orbit in $\fg\fl_k\times \fg_{r-2k}$. But such $k\notin \cS(\beta)$, so we get a contradiction. Therefore $H^2(\widehat{\OO}_1, \CC)=0$.
	\end{proof}
	
%	\begin{cor}\label{indinsing}
%		Suppose that $\widehat{\OO}$ is birationally induced from $\{0\}\times \widehat{\OO}'\subset \fg\fl_m\times \fg_{n-2m}$ for some $m\in \cS(\alpha)$. Let $\widehat{\OO}_m \subset \Spec(\CC[\widehat{\OO}])$ be the corresponding cover of $\OO_m$. Let $\fP_m$, $\fP'_m$ be the partial Namikawa spaces corresponding to the singularity of number $m$ for $\Spec(\CC[\widehat{\OO}])$ and $\Spec(\CC[\widehat{\OO}'])$ respectively. If there is no singularity of number $m$ for $\Spec(\CC[\widehat{\OO}'])$, then we set $\fP'_m=0$. If $\widehat{\OO}_m=\bigsqcup \widehat{\OO}_{m,i}$ is disconnected, we set $\fP_m=\bigoplus \fP_{m,i}$. Then $\fP_m=\fP'_m\oplus\fz(\fg\fl_m)$, and $\fP_k=\fP'_k$. 
%	\end{cor}
%	\begin{proof}
%		By \cref{h2}, $H^2(\widehat{\OO},\CC)=H^2(\widehat{\OO}',\CC)$. \cref{le} implies that $\dim \fP_k\le \dim \fP'_k$. As in \cref{h2}, $\fP=\fz(\fg\fl_m)\oplus \fP'$, and the corollary follows. 
%	\end{proof}
%	\begin{cor}\label{indinnotsing}
%		Suppose that $\widehat{\OO}$ is birationally induced from $\{0\}\times \widehat{\OO}'\subset \fg\fl_k\times \fg_{n-2k}$, and $k\notin \cS(\alpha)$. Then for all $m\in \cS(\alpha)$ we have $\fP_m=\fP'_m$.
%	\end{cor}
%	\begin{proof}
%		The proof is analogous to \cref{indinsing}. 
%	\end{proof}
%	\begin{cor}\label{connected}
%		Under the conditions of \cref{indinsing} the cover $\widehat{\OO}_m$ is connected.
%	\end{cor}
%	\begin{proof}
%		Suppose that $\widehat{\OO}_m=\bigsqcup_{i=1}^l \widehat{\OO}_{m,i}$ is disconnected. Then for every $i$, $j$ we have $\fP_{m,i}\simeq $
%	\end{proof}
	Note that \cref{h2} is the second statement of \cref{4.4}. To finish the proof of the theorem we imply the first statement from \cref{h2} and \cref{lem}. 
	\begin{cor}\label{contractsing}
		Let $\widehat{\OO}$ be a cover of a nilpotent orbit $\OO\subset \fg$, and $\alpha\in \cP(n)$ be the partition corresponding to $\OO$. Consider $m\in \cS(\alpha)$ and let $\widehat{\OO}_m\subset \Spec(\CC[\widehat{\OO}])$ be the corresponding cover of $\OO_m$. Let $\widehat{\OO}'$ be a birationally rigid cover of $\OO'\subset \fl$, such that $\widehat{\OO}$ is birationally induced from $\widehat{\OO}'$. Suppose that $\fl=\prod_{m\in \cS(\alpha)}\fg\fl_m^{k_m}\times \prod_{k\in S} \fg\fl_k\times \fg_{n'}$. Then $\fP_m$ is identified with $\CC^{k_m}=\fz(\fg\fl_m^{k_m})$.
	\end{cor} 
	\begin{proof}
		By \cref{h2}, $\fP_0=H^2(\widehat{\OO},\CC)=\fz(\prod_{k\in S} \fg\fl_k)$. By \cref{lem}, $\fP_m\subset \fz(\fg\fl_m^{k_m})$. Recall that $\fP=\bigoplus_{m\in \cS(\alpha)}\fz(\fg\fl_m^{k_m}) \oplus \bigoplus_{l\in S}\fz(\fg\fl_l)$  by \cref{P=z(l)}, and $\fP=\fP_0\oplus \bigoplus_{m\in \cS(\alpha)} \fP_m$ by \cref{P=P}. Therefore, we have $\fP_m=\fz(\fg\fl_m^{k_m})$ for all $m\in S(\alpha)$.
	\end{proof}

%	\cref{h2} and \cref{contractsing} are the two statements of \cref{4.4}, so the theorem is proved.
%	
%	\begin{cor}\label{one leaf}
%		Let $\widehat{\OO}$ be a cover, and $\widehat{\OO}_m\subset \Spec(\CC[\widehat{\OO}])$ be a cover of a codimension $2$ orbit. Then there are covers $\widehat{\OO}'$, $\widehat{\OO}'_m\subset \Spec(\CC[\widehat{\OO}'])$ of orbits $\OO'$, $\OO'_m\subset \fl$, such that:
%		
%		1) $\widehat{\OO}$ and $\widehat{\OO}_m$ are birationally induced from $\widehat{\OO}'$ and $\widehat{\OO}'_m$ correspondingly;
%		
%		2) Connected components of $\widehat{\OO}'_m$ are the only codimension $2$ singular symplectic leaves of $\Spec(\CC[\widehat{\OO}'])$;
%		
%		3) $H^2(\widehat{\OO}',\CC)=0$.  
%	\end{cor}	
%	\begin{proof}
%		We take $\fl=\prod_{i\in \cS(\alpha), i\neq m}\fg\fl_i^{k_i}\times \prod_{k\in S} \fg\fl_k\times \fg_{n-2\sum_{i\in \cS(\alpha)}ik_i+2mk_m-2\sum_{l\in S} l}$. Then Corollary follows from the results above.
%	\end{proof}

%		In \cref{connect} we will show that $\widehat{\OO}'_m$ is connected. Therefore the cover $\widehat{\OO}_m$ is connected.

	\section{Singularities of $\Spec(\CC[\widetilde{\OO}])$}\label{affin}

        We note that the arguments of \cref{sect: partial} work for every $G$-equivariant cover. However, the dimension of the partial Namikawa space $\fP_m$ in general depends on the cover $\widehat{\OO}$ of $\OO$. To give a general answer, in this section we compute the partial Namikawa spaces and the corresponding singularities for the universal cover $\widetilde{\OO}$. For a general cover $\widehat{\OO}$, the singularity of the associated cover $\widehat{\OO}_m$ is a cover of the singularity of $\OO_m\subset \Spec(\CC[\OO])$ and is covered by the singularity of $\widetilde{\OO}_m\subset \Spec(\CC[\widetilde{\OO}])$. Using \cref{4.4}, we can explicitly compute it.  
	%		 We study the singularity of number $k$. Fix $k$.
	
	\subsection{Computing the Namikawa space $\fP_m$}\label{compute p}
%	Or strategy for computing the singularities of $\Spec(\CC[\widetilde{\OO}])$ is the following one. First, we use the results of \cref{Pcover} to compute the Namikawa space $\fP_m$. Then we restrict ours
	Let ${\OO}\subset \fg$ be a nilpotent orbit, $\alpha\in \cP(n)$ be the corresponding partition, and consider $m\in \cS(\alpha)$. Let $\beta$ be the partition corresponding to the orbit $\OO_m$. Recall that $d_m=[\frac{\alpha_m-\alpha_{m+1}}{2}]$, and $\OO$ is birationally induced from $\{0\}\times \OO_L\subset \fg\fl_m^{d_m}\times \fg_{n-2md_m}$. Let $\gamma$ be the partition corresponding to the orbit $\OO_m$. By \cref{pi1g}, we have a group epimorphism $\phi: \pi_1^G(\OO)\to \pi_1^{G_{n-2md_m}}(\OO_L)$, and 
%	 and set $\widetilde{\OO}_L$ to be the universal $G_{n-2md_m}$-equivariant cover of $\OO_L$, and let $\check{\OO}$ be the cover of $\OO$ birationally induced from $\widetilde{\OO}_L$. We set
 	let $H_m\subset \pi_1^G(\OO)$ be the kernel of this map. 
	
	\begin{prop}\label{hm}
		Consider types of the singularity $\OO_m\subset \overline{\OO}$ as in \cref{KP}. Then we have the following correspondence between a type of the singularity and $H_m$:
		\begin{center}
			\begin{tabular}{ |c|c|} 
				\hline
				Type of the singularity $\OO_m\subset \overline{\OO}$ & $H_m$\\
				\hline 							 		
			a, $\alpha_m$ and $\alpha_{m+1}$ are the only members with odd multiplicity & $1$\\
			%			 \hline 
			%							 		 b, $\OO$ is* not rather odd or not in $\fs\fo_{n}$& $\ZZ/2\ZZ$ \\
			\hline 
			a, otherwise & $\ZZ/2\ZZ$\\
				\hline 
				b, $\alpha_m$ and $\alpha_{m+1}$ are the only members with odd multiplicity & $1$\\
				%			 \hline 
				%							 		 b, $\OO$ is* not rather odd or not in $\fs\fo_{n}$& $\ZZ/2\ZZ$ \\
				\hline 
				b, otherwise & $\ZZ/2\ZZ$\\
				\hline 
				c&  $1$\\
				\hline 
				d& $1$\\
				\hline 
				e& $1$\\
				\hline 
				%							 		&& $\ZZ/2\ZZ$& $\ZZ/2\ZZ\times \ZZ/2\ZZ$& $\ZZ/2\ZZ$& $1$& $1$& $1$\\
				%							 		\hline
			\end{tabular}
		\end{center}
	\end{prop}
	\begin{proof}
%		For types d and e $\alpha_m$ and $\gamma_m$ are of the same parity, so by \cref{pi_1} $\pi_1^G(\OO)=\pi_1^L(\OO_L)$, and therefore $\check{\OO}=\widetilde{\OO}$.
%		
%		For types a and b $\alpha_m$ is of the opposite parity to $\fg$, so $|\pi_1^G(\OO)|=2|\pi_1^L(\OO_L)|$, and $|H_m|=2$.
%		
%		%				For type b, if $\fg$ is not of type $D$, the same argument works. Suppose that $\fg$ is of type $D$. Then $|\pi_1(\OO)|\neq 2|\pi_1(\OO_L)|$ if and only if $\OO$ is a rather odd orbit. Then $H_m$ is the central extension of $\ZZ/2\ZZ$ by $\ZZ/2\ZZ$.
%		%				
%		For type c, if $\alpha=(2n+1)$, then $\OO_L$ is the zero orbit, so $\pi_1^G(\OO_L)=0$. Therefore $\check{\OO}=\OO$, and $H_m=\pi_1^G(\OO)=\ZZ/2\ZZ$. If $\alpha$ is not as above, then $\gamma_1=1+\alpha_2\ge 2$, and therefore $\OO_L$ is not the zero orbit. $\alpha_m$ and $\gamma_m$ are of the same parity, so by \cref{pi_1} $\pi_1^G(\OO)=\pi_1^L(\OO_L)$, and therefore $\check{\OO}=\widetilde{\OO}$.
	Follows from the direct computation of $|\pi_1^G(\OO)|$ and $|\pi_1^{G_{n-2md_m}}(\OO_L)|$ using \cref{pi_1}.
	\end{proof}
	%		Suppose that $\alpha$ is not rather odd in $\fg=\fs\fo_{n}$. Then $H_m\subset \pi_1(\OO)$ is a direct summand. We define $\Gamma_m=\pi_1(\OO)/H_m$.
	%		
	Now, let $\widetilde{\OO}$ be the universal $G$-equivariant cover of $\OO$, and let $\widetilde{\OO}_m\subset \Spec(\CC[\widetilde{\OO}])$ be the preimage of $\OO_m\subset \overline{\OO}$ under the moment map $\Spec(\CC[\widetilde{\OO}])\to \fg^*$. 
	%By \cref{one leaf} we can assume that the connected components of $\widetilde{\OO}_m$ are all codimension $2$ symplectic leaves of $\Spec(\CC[\widetilde{\OO}])$. 
	
	\begin{prop}\label{pi}
            \begin{itemize}
                \item[(i)] If $\alpha_m$ and $\alpha_{m+1}$ both appear with multiplicity $1$, then $\dim \fP_m=d_m+1$;
                \item[(ii)] If $H_m=1$, but not as in (i), then $\dim \fP_m=d_m$;
                \item[(iii)] If $H_m=\ZZ/2\ZZ$, $\dim \fP_m=d_m-1$.
            \end{itemize}
	\end{prop}
	\begin{proof}
		If $H_m=1$, then $\pi_1^{G-2md_m}(\OO_L)=\pi_1^G(\OO)$, and therefore $\widetilde{\OO}$ is birationally induced from $\fg\fl_m^{d_m}\times \fg_{n-2md_m}$ by \cref{birbircov}. Moreover, the orbit $\OO_L$ is induced from $\fg\fl_m\times \fg_{n-2m(d_m+1)}$ if and only if $\alpha_m$ and $\alpha_{m+1}$ both appear with multiplicity $1$. In this case, the induction is not birational, and therefore by \cref{birbircov} and the discussion before, $\widetilde{\OO}_L$ is birationally induced from $\fg\fl_m\times \fg_{n-2m(d_m+1)}$. By \cref{contractsing}, $\fP_m=\fz(\fg\fl_m^{d_m+1})$ if $\alpha_m$ and $\alpha_{m+1}$ both appear with multiplicity $1$, and $\fP_m=\fz(\fg\fl_m^{d_m})$ otherwise.
		
		If $H_m=\ZZ/2\ZZ$, then \cref{birbircov} implies that $\widetilde{\OO}$ cannot be birationally induced from $\fg\fl_m^{d_m}\times \fg_{n-2md_m}$, so by \cref{contractsing} $\dim \fP_m<d_m$. If $d_m=1$, then $\dim \fP_m=0$. Suppose $d_m>1$. Let $\OO'\subset \fg\fl_m^{d_m-1}\times \fg_{n-2m(d_m-1)}$ be the orbit birationally induced from $\OO_L$. Since $\alpha_m>\alpha_{m+1}+2d_m-2$, $\pi_1^G(\OO)\simeq \pi_1^G(\OO')$, and $\OO$ is birationally induced from $\OO'$ by \cref{birLS}. By \cref{birbircov}, $\widetilde{\OO}$ is birationally induced from some $G_{n-2m(d_m-1)}$-equivariant cover $\widetilde{\OO}'$, and $\fz(\fg\fl_m^{d_m-1})\subset \fP_m$ by \cref{contractsing}. Therefore $\dim \fP_m\ge d_m-1$.
	\end{proof} 
	
	The cases (a, (i) -- (iii)) of \cref{theorem} immediately follow from \cref{pi}.

        To understand other cases we need a notion of an \emph{$m$-determining} cover $\widehat{\OO}$ of $\OO$. We define it as follows. Let $\fl=\prod_{m\in \cS(\alpha)}\fg\fl_m^{k_m}\times \prod_{k\in S} \fg\fl_k\times \fg_{n'}$, and assume that $(\fl ,\widetilde{\OO}_L)$ is a birational induction datum of $\widetilde{\OO}$. By \cref{pi}, $d_m-1\le k_m\le d_m+1$. Let $\fl'=\fg\fl_m^{k_m}\times \fg_{n-2md_m}$, and let $\OO_{L'}$, $\widetilde{\OO}_{L'}$ be the orbit induced from $\OO_L\subset \fl$, and a cover birationally induced from $\widetilde{\OO}_L$ respectively.
        
        \begin{itemize}
            \item[(i)] If $k_m\ge d_m$, we define $\widehat{\OO}$ to be the cover of $\OO$ birationally induced from the orbit $\OO_{L'}\subset \fl'$. We note that $\widehat{\OO}$ is either trivial or $2$-fold cover of $\OO$.
            \item[(ii)] If $k_m = d_m-1$, then $H_m\simeq \ZZ/2\ZZ$. Let $K_m\subset \pi_1^G(\OO)$ be a minimal subgroup such that the restriction of $\phi$ to $K_m$ is surjective. We define $\widehat{\OO}$ to be the cover corresponding to the subgroup $K_m\subset \pi_1^G(\OO)$, and note that it is a $2$-fold cover.
        \end{itemize}
        
         The key property is that by construction $\fP_m (\widetilde{X})\simeq \fz(\fg\fl_m^{k_m})\simeq \fP_m (\widehat{X})$. We will compute the singularity of $\widehat{\OO}_m$ in $ \Spec(\CC[\widehat{\OO}])$ in \cref{min}. In \cref{final} we prove that is it equivalent to the one of $\widetilde{\OO}_m$ in $\Spec(\CC[\widetilde{\OO}])$, thus justifying its name.

	%			\begin{prop}\label{mincover}
	%				The singularity of $\widetilde{\OO}_m$ in $\Spec(\CC[\widetilde{\OO}])$ is equivalent to the one of $\widehat{\OO}_m$ in $\Spec(\CC[\widehat{\OO}])$.
	%			\end{prop} 
	%			\begin{proof}
	%				content...
	%			\end{proof}
	
	%			Singularity is isomorphic to a subregular in the (cover of) regular.
	
	\subsection{Singularity of the affinization of an $m$-determining cover}\label{min}
	
	Let us first compute the singularity of every connected component of $\widehat{\OO}_m$ in $\Spec(\CC[\widehat{\OO}])$ for the $m$-determining cover $\widehat{\OO}$. 
	%Assume that connected components of $\widehat{\OO}_m$ are the only codimension $2$ symplectic leaves of $\Spec(\CC[\widehat{\OO}])$. By \cref{one leaf}, that will allow us to compute the singularities for a general case as well. 
	If $\widehat{\OO}=\OO$, and we are done by \cref{KP}. Suppose $H_m=\ZZ/2\ZZ$. By \cref{hm} the singularity of the pair $(\OO, \OO_m)$ is of types (a) or (b). The type (a) was already discussed in \cref{compute p}, so we can assume that the singularity of $\OO_m\subset \overline{\OO}$ is equivalent to $\CC^2/\Gamma$, where $\Gamma$ stands for the binary dihedral group of order $4(d_m-1)$.
	
	\begin{lemma}\label{2cov}
		$\widehat{\OO}_m\simeq \OO_m$. Therefore $\widehat{\OO}_m$ is connected, and the singularity of $\widehat{\OO}_m$ in $\Spec(\CC[\widehat{\OO}])$ is equivalent to $\CC^2/\Gamma'$, where $\Gamma'\subset \Gamma$ is a subgroup of index $2$.
	\end{lemma}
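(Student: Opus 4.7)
Plan: The strategy is to perform a formal-local analysis of $\mu$ near a point $x\in \OO_m$, paralleling the argument in \cref{together}. Write $U = V\times \Sigma$ for the formal neighborhood of $x$ in $\Spec(\CC[\OO])$, where $V$ is the formal slice along $\OO_m$ and $\Sigma = (\CC^2/\Gamma)^{\wedge_0}$ with $\Gamma$ the binary dihedral group of order $4(d_m-1)$. The degree-$2$ étale cover $\widehat{\OO}\to \OO$ restricts over $U^{reg} = V\times (\Sigma\setminus\{0\})$ to a degree-$2$ étale cover, classified by a subgroup $\Gamma'\subset \pi_1(U^{reg}) = \Gamma$ of index $1$ or $2$. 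If $[\Gamma:\Gamma']=2$, the cover extends uniquely as a normal variety to $V\times (\CC^2/\Gamma')^{\wedge_0}$ over $U$, the preimage $\mu^{-1}(V\times\{0\})$ equals $V\times \{0\}$ and maps isomorphically onto $\OO_m\cap U$, and the formal slice to $\mu^{-1}(V\times\{0\})$ in $\Spec(\CC[\widehat{\OO}])$ is $(\CC^2/\Gamma')^{\wedge_0}$. Gluing these identifications over $\OO_m$ gives the three desired conclusions: $\widehat{\OO}_m\simeq \OO_m$, connectedness, and the stated form of the singularity.

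The heart of the proof is ruling out the possibility $\Gamma' = \Gamma$ (which would correspond to $\widehat{\OO}_m$ splitting into two disjoint copies of $\OO_m$). I would argue by contradiction, mirroring \cref{together}. Assume $\Gamma' = \Gamma$; then $\mu^{-1}(U)$ is a disjoint union of two copies of $U$, and in particular $\mu^{-1}(U)^{reg}$ is disconnected. On the other hand, $\widehat{\OO}$ can be recovered as the open $G$-orbit $\rho^{-1}(\OO)$ inside $G\times_P(\overline{\OO}_0\times \fn)$ for the generalized Springer morphism $\rho$ from the Levi $\fl = \fg\fl_m^{d_m}\times \fg_{n-2md_m}$: by the iterated version of \cref{o2}, $\pi_1^G(\rho^{-1}(\OO))$ is a subgroup of $\pi_1^G(\OO)$ whose image under $\phi$ is all of $\pi_1^{G_{n-2md_m}}(\OO_0)$ and whose order realizes the minimum allowed by this surjectivity, matching the defining property of $K_m$. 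This yields a factorization $\rho = \mu\circ \widetilde{\rho}$ through $\Spec(\CC[\widehat{\OO}])$ with $\widetilde{\rho}$ birational, under which $\widetilde{\rho}^{-1}(\mu^{-1}(U)) = \rho^{-1}(U)$ is connected, being a formal neighborhood of a connected fiber of $\rho$ inside the normal scheme $G\times_P(\overline{\OO}_0\times \fn)$. This contradicts the disconnectedness of $\mu^{-1}(U)^{reg}$, forcing $\Gamma'\subsetneq \Gamma$.

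The main obstacles are the identification $\widehat{\OO}\simeq \rho^{-1}(\OO)$, i.e.\ matching the abstract minimal complement $K_m$ with the concrete $\pi_1^G$ of the induced cover, and verifying the connectedness of $\rho^{-1}(U)$. The former reduces to a direct $\pi_1^G$-computation via \cref{pi_1,pi1g}, using that $H_m = \ZZ/2\ZZ$ together with the fact that $\pi_1^G(\OO)$ is elementary abelian, so that minimal complements to $H_m$ are precisely the subgroups mapping isomorphically onto $\pi_1^{G_{n-2md_m}}(\OO_0)$. The latter is a local geometric check carried out from the explicit Kraft--Procesi description of the type (b) minimal degeneration, where one sees that the fiber of $\rho$ over a generic point of $\OO_m$ is a single projective line rather than two disjoint ones.
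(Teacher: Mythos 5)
Your reduction is sound and matches the paper's implicit setup: over a formal neighborhood $U=V\times(\CC^2/\Gamma)^{\wedge_0}$ of a point of $\OO_m$ the $2$-fold cover $\widehat{\OO}\to\OO$ is classified by a subgroup $\Gamma'\subset\Gamma$ of index $1$ or $2$, and everything comes down to excluding $\Gamma'=\Gamma$. But your exclusion argument rests on a false identification. In the situation of this lemma ($H_m=\ZZ/2\ZZ$, type (b)), the generalized Springer morphism $\rho\colon G\times_P(\overline{\OO}_0\times\fn)\to\overline{\OO}$ for the Levi $\fg\fl_m^{d_m}\times\fg_{n-2md_m}$ is \emph{birational}: that is exactly what it means for $\phi$ to be defined on all of $\pi_1^G(\OO)$ with kernel $H_m$ (see \cref{o2} and the remark preceding \cref{bircov} --- a nontrivial $H_m$ occurs only when $\rho$ is birational, whereas a non-birational $\rho$ forces $H_m=1$ and $\phi$ an isomorphism on $\Gamma_m$). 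Hence $\rho^{-1}(\OO)=\OO$, not the $m$-determining cover $\widehat{\OO}$. Indeed $\pi_1^G(\widehat{\OO})=K_m$ is by construction a complement to $H_m$, so $H_m\not\subset K_m$, and \cref{bircov} says precisely that $\widehat{\OO}$ is \emph{not} birationally induced from any cover of $\OO_0$ over this Levi; the factorization $\rho=\mu\circ\widetilde{\rho}$ through $\Spec(\CC[\widehat{\OO}])$ that your contradiction needs does not exist. The subsequent claims (matching $K_m$ with $\pi_1^G(\rho^{-1}(\OO))$ via an ``iterated \cref{o2}'', and the connectedness of $\rho^{-1}(U)$) therefore have nothing to attach to.

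The paper closes this case by a Namikawa-space dimension count instead. If $\Gamma'=\Gamma$, then each connected component of $\widehat{\OO}_m$ carries the full $D_{d_m+1}$ singularity, so $\widetilde{\fP}_m=(\CC^{d_m+1})^{\oplus i}$ and the $\pi_1$-invariants still have dimension at least $d_m$ (the fixed space of the order-$2$ diagram automorphism of $D_{d_m+1}$ has dimension $d_m$); this contradicts $\dim\fP_m=d_m-1$, which \cref{pi} extracts from the birational-induction analysis (\cref{birbircov} and \cref{contractsing}). If you want to keep a geometric flavor, you would have to run your connectedness argument for a map that actually dominates $\Spec(\CC[\widehat{\OO}])$ --- e.g.\ the terminalization built from the Levi $\fg\fl_m^{d_m-1}\times\fg_{n-2m(d_m-1)}$ used in the proof of \cref{pi} --- but at that point you are reproving \cref{pi}, so the dimension count is the shorter route.
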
	
	\begin{proof}
		Let the singularity of every connected component of $\widehat{\OO}_m$ in $\Spec(\CC[\widehat{\OO}])$ be equivalent to $\CC^2/\Gamma_{\widehat{X}}$. Since $\widehat{\OO}\to \OO$ is a two-fold covering, $\Gamma_{\widehat{X}}\subset \Gamma$ is of index $1$ or $2$. 
		
		If $\Gamma_{\widehat{X}}=\Gamma$, then the singularity of every connected component $\widehat{\OO}_m$ in $\Spec(\CC[\widehat{\OO}])$ is equivalent to $\CC^2/\Gamma$. Therefore $\widetilde{\fP}_m (\widehat{X})=(\CC^{d_m+1})^{\oplus i}$, where $i=1$ or $2$ stands for the number of connected components of $\widehat{\OO}_m$, and $\dim \fP_m (\widehat{X})\ge d_m$. Analogously to \cref{pi}, $\dim \fP_m (\widehat{X})=d_m-1$, so we get a contradiction. 
		
		Thus, $\Gamma_{\widehat{X}}\subset \Gamma$ is a subgroup of index $2$, and therefore $\widehat{\OO}_m\simeq \OO_m$.
		
%		Then $\Gamma'\subset \Gamma$ is a subgroup of index $2$. Let $S\simeq (\CC^2/\Gamma)^{\wedge_0}$ be the formal slice to $\OO_m$ in the point $y$. By \cref{top}, we can find an open neighborhood $U\subset \OO\cup \OO_m$ of a point $y\in \OO_m$, such that the preimage of $U$ under the map $\mu: \widehat{\OO}\cup \widehat{\OO}_m\to \OO\cup \OO_m$ is disconnected. Therefore $\mu^{-1}(S)$ is the disjoint union of two copies of $S'\simeq (\CC^2/\Gamma')^{\wedge_0}$. Then we have an etale map between the smooth sympectic leaves of $S'\sqcup S'$ and $S$. This map is of degree $4$, because each map $S'\to S$ is of degree $2$. On the other hand, restricted to the regular locus, this map comes from the cover $\widehat{\OO}\to \OO$, that is of degree $2$, so we get a contradiction.
	\end{proof}	
	
	\begin{prop}\label{noD}
		$\Gamma_{\widehat{X}}$ is a cyclic group of order ${2(d_m-1)}$. Therefore the singularity of $\widehat{\OO}_m$ in $\Spec(\CC[\widehat{\OO}])$ is of type $B_{d_m-1}$.
	\end{prop}
	\begin{proof}
		Let $\fP_m$ and ${\fP}_m (\widehat{X})$ be the Namikawa spaces for the codimension two leaves $\OO_m\subset \Spec(\CC[{\OO}])$ and $\widehat{\OO}_m\subset \Spec(\CC[\widehat{\OO}])$ respectively. Note that by \cref{2cov} $\widehat{\OO}_m$ is connected. Let $\Sigma=\CC^2/\Gamma$ and $\Sigma_{\widehat{X}}=\CC^2/\Gamma_{\widehat{X}}$. 

        Let $V=Z(\CC[\Gamma])$ and $V_{\widehat{X}}=Z(\CC[\Gamma_{\widehat{X}}])$. Recall that $\fP_m\simeq V^{\pi_1(\OO_m)}$ and $\fP_m (\widehat{X}) \simeq V_{\widehat{X}}^{\pi_1(\OO_m)}$ respectively. 
		%Pick a point $x\in \OO'$ and set $W=T_x(\OO')$. Let $\AA(W)$ be the homogenized Weyl algebra for the symplectic space $W$. 
		
%		Consider $\xi\in \widehat{\fP}$ and let $A_{\xi}$ be the corresponding Poisson deformation of $\Spec(\CC[\widehat{\OO}])$. 
		%By \cref{p_i-p-p_i}, $\cA_{\xi}^{\wedge_x}=	\AA_h(W)^{\wedge_0}\widehat{\otimes}_{\CC[[h]]} \underline{\cA}_{\xi}^{\wedge_0}$, where $\underline{\cA}_{\xi}$ is the quantization of $\CC^2/\Gamma$, corresponding to $\xi\in \widehat{V}$.
		
		By the discussion above, ${\fP}_m (\widehat{X})=\fz(\fg\fl^{d_m-1})\subset \fz(\fg\fl^{d_m})=\fP_m$. 
  %If $\Gamma_{\widehat{X}}$ is a dihedral group of order $2(d_m-1)$ for $d_m\ge 4$, then $d_m=2t+1$, and $\dim H^2(\widetilde{\Sigma}_{\widehat{X}})\le t+2\le d_m-1$    
%		Let $B_{\xi}$ be the Poisson deformation of $\Spec(\CC[{\OO}])$ corresponding to $\xi\in \fP$. 
		%We have $\cB_{\xi}^{\wedge_x}=	\AA_h(W)^{\wedge_0}\widehat{\otimes}_{\CC[[h]]} \underline{\cB}_{\xi}^{\wedge_0}$, where $\underline{\cB}_{\xi}$ is the quantization of $\CC^2/D_{4(d_m-1)}$, corresponding to $\xi\in {V}$.	
  It implies that any $\xi\in \fP_m (\widehat{X})$ is a common deformation direction of $\CC^2/\Gamma$ and $\CC^2/\Gamma_{\widehat{X}}$, i.e. $\xi \in V\cap {V}_{\widehat{X}}$. Suppose that $\Gamma_{\widehat{X}}$ is not a cyclic group, then $d_m=2k+1$, $k\ge 1$, and $\Gamma_{\widehat{X}}$ is the dihedral group of order $4k$. Then $V\cap {V}_{\widehat{X}}=Z(\CC[\Gamma])\cap Z(\CC[\Gamma_{\widehat{X}}])=Z(\CC[\Gamma_{\widehat{X}}])^{\ZZ/2\ZZ}$, where $\ZZ/2\ZZ$ corresponds to the quotient $\Gamma/\Gamma_{\widehat{X}}$. It is easy to see that the action of $\ZZ/2\ZZ$ on $Z(\CC[\Gamma_{\widehat{X}}])$ is not trivial. Then we have $\dim \fP_m (\widehat{X})<\dim H^2(\widetilde{\Sigma}_{\widehat{X}})=\dim V_{\widehat{X}}\le k+2\le d_m-1$, and we get a contradiction.

  Thus, $\Gamma_{\widehat{X}}$ is a cyclic group of order ${2(d_m-1)}$. Since $\dim \fP_m (\widehat{X})=d_m-1$, the monodromy action on $\widetilde{\fP}_m (\widehat{X})$ is non-trivial.

	\end{proof}

    It remains to consider the case when $k_m=d_m+1$, and $\widehat{\OO}$ is a $2$-fold cover of $\OO$. This is possible if and only if $\fg$ is of type $C$, and $\alpha_m$ and $\alpha_{m+1}$ both appear with multiplicity $1$. 
    \begin{itemize}
        \item If $(\alpha, \beta)$ is of type (a), then $d_m=1$. The singularity of $\OO_m\subset \Spec(\CC[\OO])$ is of type $A_1$, and therefore its preimage under the map $\mu: \widehat{X}\to \Spec(\CC[\OO])$ is either a union of two singularities of type $A_1$, or is isomorphic to $\CC^2$. Since $\dim \fP_m (\widehat{X})=2$, it must be the former, and the two preimages of a point $x\in \OO_m$ must be in different connected components of $\mu^{-1}(\OO_m)$, thus $\mu^{-1}(\OO_m)$ is a disconnected $2$-fold cover of $\OO_m$.
        \item If $(\alpha, \beta)$ is of type (b), then $\Gamma$ is a dihedral group of order $4(d_m-1)$. Assume that $\Gamma_{\widehat{X}}\subset \Gamma$ is a subgroup of index $2$. Then $\widehat{\OO}_m\simeq \OO_m$. 
        
        Since $\fP_m(\widehat{X})\simeq \fz(\fg\fl_m^{d_m+1})$, \cref{pifroml} implies that all deformation directions from $\fP_m(X)$ are common deformation directions for $\Sigma$ and $\Sigma_{\widehat{X}}$. 
        
        If $\Gamma_{\widehat{X}}$ is a cyclic group of order $2(d_m-1)$, then the singularity with symmetry $(\Sigma_{\widehat{X}}, \ZZ/2\ZZ)$ is of type $B_{d_m-1}$, see \cite[Section 6.2]{Slodowy}. Using the same notations as above, $\dim V \cap {V}_{\widehat{X}} \le d_m-1$. Since $\fg$ is of type $C$, and $\alpha_m$ and $\alpha_{m+1}$ both appear with multiplicity $1$, \cref{pifroml} implies that $\dim \fP_m (X) = d_m$. We got a contradiction. 

        If $\Gamma_{\widehat{X}}$ is a dihedral group of order $2(d_m-1)$ for $d_m=2t+1$, then $\dim \fP_m (\widehat{X})\le t+2\le d_m$, and we get a contradiction.

        Thus, $\Gamma_{\widehat{X}}=\Gamma$. It remains to show that $\widehat{\OO}_m$ is a connected cover of $\OO_m$. Indeed, otherwise, we have $\dim \fP_m (\widehat{X})\ge 2 d_m > d_m+1$. Since $\dim \fP_m(\widehat{X})=\dim V_{\widehat{X}}$, it implies that the mondromy action is trivial.
        
    \end{itemize}
	%As a straightforward corollary, we get that $\widehat{\OO}_m$ is connected. 
	\subsection{Singularities of the affinization of the universal cover}\label{final}
	Let $\OO\subset \fg$ be a nilpotent orbit, $\widetilde{\OO}$ be the universal $G$-equivariant cover of $\OO$, $\widetilde{\OO}_m\subset \Spec(\CC[\widetilde{\OO}])$ be the preimage of the orbit $\OO_m$ under the natural map $\mu: \Spec(\CC[\widetilde{\OO}])\to \overline{\OO}$. Let $\widehat{\OO}$ be any $G$-equivariant cover of the $m$-determining cover of $\OO$, such that $\widehat{\OO}_m$ is connected, and the singularity (together with its symmetry) of $\widehat{\OO}_m\subset \Spec(\CC[\widehat{\OO}])$ is equivalent to the one for the $m$-determining cover of $\OO$. 
	\begin{lemma}\label{2-min}
		Let $\check{\OO}$ be a $2$-fold cover of $\widehat{\OO}$, and $\check{\OO}_m\subset \Spec(\CC[\check{\OO}])$ be the associated cover of $\OO_m$. Then:
		\begin{itemize}
			\item[1)] $\check{\OO}_m$ is a $2$-fold cover of $\widehat{\OO}_m$; 
			\item[2)] $\check{\OO}_m$ is a connected cover of $\widehat{\OO}_m$, and the singularity of $\check{\OO}_m$ in $\Spec(\CC[\check{\OO}])$ is equivalent to the one of $\widehat{\OO}_m$ in $\Spec(\CC[\widehat{\OO}])$.
		\end{itemize} 
	\end{lemma}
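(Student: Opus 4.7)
The plan is to analyze the $2$-fold cover $\check{\OO}\to \widehat{\OO}$ in an \'etale neighborhood of a point $x\in \widehat{\OO}_m$ and to use the minimality of the $m$-determining cover $\widehat{\OO}$ to rule out the wrong local behaviour. By \cref{noD} the formal slice to $\widehat{\OO}_m$ in $\Spec(\CC[\widehat{\OO}])$ is $\CC^2/\Gamma'$ with $\Gamma'=C_{2(d_m-1)}$, so \'etale locally near $x$ the variety $\Spec(\CC[\widehat{\OO}])$ looks like $V\times \CC^2/\Gamma'$ with $V$ a small \'etale neighborhood of $x$ in $\widehat{\OO}_m$. The $2$-fold \'etale cover $\check{\OO}\to\widehat{\OO}$ extends over the smooth locus of this neighborhood and is classified by an index $2$ subgroup of $\pi_1(V)\times \Gamma'$. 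Two essentially different situations can occur: Case I, in which the subgroup contains $\Gamma'$, so the local cover is $V'\times \CC^2/\Gamma'$ with $V'\to V$ a connected degree $2$ \'etale cover --- here the Kleinian singularity is unchanged and $\check{\OO}_m\to\widehat{\OO}_m$ is $2$-to-$1$; and Case II, in which the subgroup projects nontrivially to $\Gamma'$, so the local cover acquires a smaller singularity $\CC^2/\Gamma''$ with $\Gamma''\subsetneq \Gamma'$ of index $2$, while $\check{\OO}_m\to\widehat{\OO}_m$ is an isomorphism.

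The next key step will be to compute $\dim \fP_m$ for $\Spec(\CC[\check{\OO}])$, call it $\dim \fP_m^{\check{\OO}}$. Since the universal cover $\widetilde{\OO}$ is birationally induced from $\fl'=\fg\fl_m^{d_m-1}\times\fg_{n-2m(d_m-1)}$ (as shown in the proof of \cref{pi}), the corresponding kernel $H_m'$ is trivial and $\Gamma_m'=\pi_1^G(\OO)$; consequently \cref{bircov} applies automatically to $\check{\OO}$ and shows that $\check{\OO}$ is also birationally induced from $\fl'$. Then \cref{contractsing} gives $\dim \fP_m^{\check{\OO}}\ge d_m-1$. In the opposite direction, the minimality of $K_m$ as an $m$-determining subgroup gives $K_m\cap H_m=\{1\}$, and since $\pi_1^G(\check{\OO})\subset K_m$, the condition $H_m\subset \pi_1^G(\check{\OO})$ of \cref{bircov} fails; combined with \cref{restrict} this rules out any birational induction of $\check{\OO}$ from a Levi containing $\fg\fl_m^{d_m}$, forcing the multiplicity of $\fg\fl_m$ in the birationally rigid Levi of $\check{\OO}$ to be exactly $d_m-1$. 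Altogether $\dim \fP_m^{\check{\OO}}=d_m-1$.

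Given this equality, Case II is impossible: its local singularity is of type $A_{d_m-2}$ (with $\Gamma''=C_{d_m-1}$), giving $\dim \fP_m^{\check{\OO}}\le \dim \widetilde{\fP}_m^{\check{\OO}}\le d_m-2$, which contradicts the computation above. Hence we are in Case I, proving part $(1)$. For part $(2)$, a degree $2$ cover of a connected base is either connected or a trivial disjoint union of two copies; the latter would force $\dim \fP_m^{\check{\OO}}=2\dim \fP_m^{\widehat{\OO}}=2(d_m-1)$, again contradicting the equality $\dim \fP_m^{\check{\OO}}=d_m-1$ for $d_m\ge 2$. Therefore $\check{\OO}_m$ is a connected $2$-fold cover of $\widehat{\OO}_m$ and its singularity in $\Spec(\CC[\check{\OO}])$ coincides with the one of $\widehat{\OO}_m$ in $\Spec(\CC[\widehat{\OO}])$. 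The degenerate case $d_m=1$ falls under type $(a)$ of \cref{theorem} with trivial singularity, and the claim reduces to a $\pi_1^G$-subgroup computation in the spirit of \cref{o2}.

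The main obstacle will be the dimension calculation $\dim \fP_m^{\check{\OO}}=d_m-1$, which expresses an invariance of the partial Namikawa space $\fP_m$ under further $2$-fold covers of the $m$-determining cover; this invariance is what pins down the local Kleinian singularity of $\check{\OO}_m\subset \Spec(\CC[\check{\OO}])$ and rules out Case II.
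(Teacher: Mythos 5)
Your proposal is correct and follows essentially the same route as the paper: both arguments pin down the local behaviour of the $2$-fold cover along $\widehat{\OO}_m$ by comparing the partial Namikawa space of $\Spec(\CC[\check{\OO}])$ (of dimension $d_m-1$, obtained via the birational-induction criteria of \cref{bircov}, \cref{restrict} and \cref{contractsing}) with what each local possibility would force, ruling out the degree-one and disconnected cases by a dimension count. Your explicit \'etale-local model and the rederivation of $\dim\fP_m^{\check{\OO}}=d_m-1$ for the intermediate cover make precise steps that the paper handles by citing \cref{pi} and the phrase ``analogously to \cref{2cov}.''
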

	\begin{proof}
		1) The $2$-fold cover $\check{\OO}\to \widehat{\OO}$ induces a surjective map $f: \Spec(\CC[\check{\OO}])\to \Spec(\CC[\widehat{\OO}])$. Therefore, $\check{\OO}_m$ is a cover of $\widehat{\OO}_m$. Suppose that $\check{\OO}_m\simeq \widehat{\OO}_m$. Let the singularity of $\widehat{\OO}_m$ in $\Spec(\CC[\widehat{\OO}])$ be equivalent to $\Sigma_{\widehat{X}}=\CC^2/\Gamma_{\widehat{X}}$. Then, analogously to \cref{2cov}, the singularity of $\check{\OO}_m$ in $\Spec(\CC[\check{\OO}])$ is equivalent to $\Sigma_{\check{X}}=\CC^2/\Gamma_{\check{X}}$, where $\Gamma_{\check{X}}\subset \Gamma_{\widehat{X}}$ is a subgroup of index $2$. 
  
        We note that $\check{\OO}$ is a cover of the $m$-determining cover, and $\check{\OO}$ is covered by $\widetilde{\OO}$. Therefore, $\fP_m(\check{X})\simeq \fP_m (\widehat{X})$. The singularity $\Sigma_{\widehat{X}}$ was already computed, we analyze each case separately to show a contradiction. 
        
        \begin{itemize}
            \item If $\Sigma_{\widehat{X}}$ is of type $A_1$, then $\Gamma_{\check{X}}=1$, and therefore $\fP_m(\check{X})=0$.
            \item If $\Sigma_{\widehat{X}}$ is of type $D_{d_k}$ (with trivial mondoromy), then the analysis is analogous to the situation when $k_m=d_m+1$, and $\widehat{\OO}$ is a $2$-fold cover of $\OO$ studied above.
            \item If $\Sigma_{\widehat{X}}$ is of type $C_{k}$, the analysis is analogous to the one in \cref{noD}. 
            \item If $\Sigma_{\widehat{X}}$ is of type $B_k$, then $\Gamma_{\widehat{X}}$ is a cyclic group of order $2k$, the only subgroup $\Gamma_{\check{X}}$ of index $2$ is the cyclic group of order $k$, so $\Sigma_{\check{X}}$ is of type $A_{k-1}$. Thus, $\dim \fP_{m} (\check{X})\le k-1<\dim \fP_m (\widehat{X})$.
        \end{itemize}
		2) Suppose that $\check{\OO}_m=\check{\OO}_{m,1}\sqcup \check{\OO}_{m,2}$ is disconnected, but $\widehat{\OO}_m$ is connected. Then each $\check{\OO}_{m,i}$ is isomorphic to $\widehat{\OO}_m$. We denote the corresponding partial namikawa spaces by $\fP_{m,i}(\check{X})$. Analogously to \cref{2cov}, the singularity of both copies of $\widehat{\OO}_m$ in $\Spec(\CC[\check{\OO}])$ is equivalent to $\Sigma=\CC^2/\Gamma_{\widehat{X}}$. Since $\check{X}\to \widehat{X}$ is \'etale over $\widehat{\OO}\cup \widehat{\OO}_m$, one can show that the monodromy actions of $\pi_1(\widehat{\OO}_m)$ on $\widetilde{\fP}_{m,i}(\check{X})$ and $\widetilde{\fP}_m (\widehat{X})$ coincide. Thus, $\dim \fP_m (\check{X})=2\dim \fP_{m,1} (\check{X})=2\dim \fP_m (\widehat{X})$. By \cref{pi}, $\dim \check{\fP}_m=\dim \widehat{\fP}_m$, so we get a contradiction. Therefore $\check{\OO}_m$ is connected, and the singularity of $\check{\OO}_m$ in $\Spec(\CC[\check{\OO}])$ is equivalent to the one of $\widehat{\OO}_m$ in $\Spec(\CC[\widehat{\OO}])$. Coincidence of the dimenions of partial Namikawa spaces implies that the symmetries of the two singularities are equivalent.

            3) Assume that $\widehat{\OO}_m$ is disconnected. Then $\widehat{\OO}_m$ has two connected components, and the singularity of each of them in $\widehat{X}$ is of type $A_1$. Then $k_m=2$. Since there are no non-trivial proper subgroups of $\ZZ/2\ZZ$, and $\dim \fP_m (\check{X}) = 2$, $\check{\OO}_m$ has two irreducible components, and singularity to each of them is of type $A_1$.
	\end{proof} 
	Since the universal cover $\widetilde{\OO}$ of the $m$-determining cover $\widehat{\OO}$ can be obtained by a sequence of $2$-fold covers we have the following corollary.
	\begin{cor}\label{connect}
		The cover $\widetilde{\OO}_m$ has the same number of connected components as $\widehat{\OO}_m$, and the singularity (with symmetry) of each of the connected components of $\widetilde{\OO}_m$ in $\Spec(\CC[\widetilde{\OO}])$ is equivalent to the one of a connected component of $\widehat{\OO}_m$ in $\Spec(\CC[\widehat{\OO}])$, where $\widehat{\OO}$ is an $m$-determining cover of $\OO$.
	\end{cor}
%	\begin{cor}\label{connect}
%		The cover $\widetilde{\OO}_m$ is connected.
%	\end{cor}
%	\begin{proof}
%		In \cref{min} we showed that $\widehat{\OO}_m$ is connected, where $\widehat{\OO}$ is the minimal cover of $\OO$. It is enough to check that connectedness is preserved with every $2$-fold cover. Using notations of \cref{2-min}, we have a $2$-fold cover $\check{\OO}_m$ of $\widehat{\OO}_m$. Suppose that $\check{\OO}_m=\check{\OO}_{m,1}\sqcup \check{\OO}_{m,2}$ is disconnected. Let $T$ be a neighborhood of $\widehat{\OO}_m\subset \Spec(\CC[\widehat{\OO}])$ that is locally of the form $U\times \Sigma$, where $U\subset \widehat{\OO}_m$ is an open neighborhood, and $\Sigma$ is the corresponding Kleinian singularity. We can take $T$ small enough so that $f^{-1}(T)\simeq T\sqcup T$. The monodromy actions of $\pi_1(\check{\OO}_{m,i})=\pi_1(\widehat{\OO}_m)$ on the fiber $\widetilde{\Sigma}$ of the exceptional locus of the minimal resolution $\widetilde{T}$ coincide, so $\check{\fP}_{m,i}\simeq \widehat{\fP}_m$, and $\check{\fP}_m=\widehat{\fP}_m\oplus \widehat{\fP}_m$. By \cref{pi} $\dim \check{\fP}_m=\dim \widehat{\fP}_m$, so we get a contradiction. Therefore $\check{\OO}_m$ is connected.
%	\end{proof}
	\cref{connect} together with the computation from \cref{min} imply \cref{theorem}.

    \section{Minimal special degenerations and symplectic duality}\label{sect: duality}

    In this section we discuss how results of this paper fit in a list of observations and conjectures regarding symplectic duality. For the mathematical definition of symplectic duality we refer to \cite{BPWII}. Let $\fg$ be a simple Lie algebra, $\fg^\vee$ be its Langlands dual, and  $\cN\subset \fg$, $\cN^\vee\subset \fg^\vee$ be the respective nilpotent cones. Recall from \cite{BarbaschVogan1985} (see also \cite{Lusztig1984} and \cite{Spaltenstein}) that there is a duality map $d: \cN^\vee/G\to \cN/G$ from the set of nilpotent orbits in $\fg^\vee$ to the set of nilpotent orbit in $\fg$, that we call \emph{Barbasch-Vogan-Lusztig-Spaltenstein (BVLS) duality}. The orbits in the image of $d$ are called \emph{special}, and $d$ restricts to an order reversing bijection between special nilpotent orbits in $\fg^\vee$ and $\fg$.
    
    Let $\OO_1, \OO_2\subset \cN$ be two special nilpotent orbits, such that $\OO_1\subset \overline{\OO}_2$. Pick a point $e\in \OO_1$, and let $S(e)$ be the Slodowy slice to the point $e$. We write $S_{\OO_1}^{\OO_2}$ for the intersection $S(e)\cap \overline{\OO}_2$. The following statement is the main focus of this section.
    \begin{expect}\label{wrong expect}
        $S_{\OO_1}^{\OO_2}$ is symplectic dual to $S_{d(\OO_2)}^{d(\OO_1)}$. In particular, if $S_{\OO_1}^{\OO_2}$ is a Kleinian singularity of type A, D, E, then $S_{d(\OO_2)}^{d(\OO_1)}$ is the closure of a minimal orbit in the Lie algebra of the same type, i.e. minimal singularity $a$, $d$ or $e$.
    \end{expect}

        We note that it is well-known that this expectation is not true as stated, but it holds in many cases, and should serve as a motivation for the refined version stated below.

        In \cite{JLS} Juteau, Levy and Sommers described minimal special degenerations in the nilpotent cone. They studied the expectation above, which resulted in Theorem 10.1. As an example, we copy the Hasse diagrams for $\fs\fo(9)$ and $\fs\fp(8)$ from Section 11 of \emph{loc.cit.} Since we want to analyze the duality, we reverse the diagram for $\fs\fp(8)$ (so the top orbit is the $0$ orbit). Here $b_i^{sp}$ ($c_i^{sp}$) stands for the closure of the minimal special orbit in the Lie algebra of type B (resp. C) and rank $i$.

        \begin{tikzpicture}
	\begin{scope}[shift={(-4,0)}]
		\matrix (B4special)
		[matrix of math nodes,
		nodes in empty cells,
		nodes={outer sep=0pt,minimum size=0pt},
		column sep={1.7cm,between origins},
		row sep={1.5cm,between origins}]
		{
			&&& | (a) | [9]  \\
			&&& | (b) | [7,1^2] \\
			&&& | (c) | [5,3,1]  \\
			&&& | (d) | [5,2^2]  \\
			&& | (e) | [5,1^4] &&  | (f) |  [3^3] \\
			&&&  | (g) | [3^2,1^3]  \\
			&&&  | (h) | [3,2^2,1^2] \\
			&&&  | (i) | [3,1^6] \\
			&&&  | (j) | [1^9]\\
		};

		\draw (a) -- (b) node[midway, right] {${\color{blue}B_4}$};
		\draw (b) -- (c) node[midway, right] {${\color{blue}C_3^*}$};
		\draw (c) -- (d) node[midway, right]{${\color{blue}C_1}$};  
		\draw (d) -- (e) node[midway, left] {${\color{blue}d_2^+}$}; 
		\draw (d) -- (f) node[midway, right=1pt]{${\color{blue}B_1}$}; %above left=-3pt
		\draw (f) -- (g) node[midway,right=.5pt] {${\color{blue}B_1}$}; %above right=-3pt
		\draw (e) -- (g) node[midway, below left=-3pt] {${\color{blue}C_2^*}$};
		\draw (g) -- (h) node[midway, right] {${\color{blue}C_1}$};
		\draw (h) -- (i) node[midway, right] {${\color{blue}d^+_3}$};  
		\draw (i) -- (j) node[midway, right] {${\color{blue}b^{\mathrm{sp}}_4}$};
		
	\end{scope}
	
	\begin{scope}[shift={(4,0)}]
		\matrix (C4special)
		[matrix of math nodes,
		nodes in empty cells,
		nodes={outer sep=0pt,minimum size=0pt},
		column sep={1.7cm,between origins},
		row sep={1.5cm,between origins}]
		{
	       	&&&  | (j) | [1^8] \\
			&&&  | (i) |  [2^2,1^4] \\
			&&&  | (h) |  [2^4]  \\
			&&& | (g) | [3^2,1^2] \\
			&& | (e) | [4,2,1^2]   && | (f) | [3^2,2]   \\
			&&& | (d) | [4,2^2] \\
			&&& | (c) | [4^2] \\
			&&& | (b) |[6,2] \\
                &&& | (a) | [8]  \\
		};
		
		\draw (a) -- (b) node[midway, right] {${\color{blue}C_4}$};
		\draw (b) -- (c) node[midway, right] {${\color{blue}C_2}$};
		\draw (c) -- (d) node[midway, right]{${\color{blue}C_2^*}$};  
		\draw (d) -- (e) node[midway, left] {${\color{blue}C_1}$}; 
		\draw (d) -- (f) node[midway, right=1pt]{${\color{blue}C_1}$}; %above left=-3pt
		\draw (f) -- (g) node[midway,right=.5pt] {${\color{blue}C_1}$}; %above right=-3pt
		\draw (e) -- (g) node[midway, left=3pt] {${\color{blue}C_1}$};
		\draw (g) -- (h) node[midway, right] {${\color{blue}d_2^+}$};
		\draw (h) -- (i) node[midway, right] {${\color{blue}c^{\mathrm{sp}}_2}$};  
		\draw (i) -- (j) node[midway, right] {${\color{blue}c^{\mathrm{sp}}_4}$};

		\end{scope}
\end{tikzpicture}

        The idea behind \cref{wrong expect} is based on the expectation that the symplectic dual to the Slodowy slice $S(e)$ for $e\in \OO^\vee\subset \fg^\vee$ is related to the closure of the dual orbit $d(\OO^\vee)\subset \fg$. In \cite{LMBM} Losev, Mason-Brown and the author refined the BVLS duality map by constructing an injective map $D$ from the set of nilpotent orbits in $G^\vee$ to the set of $G$-equivariant covers of special orbits in $\fg$, such that $D(\OO^\vee)$ is a $G$-equivariant cover of $\OO^\vee$. In \emph{loc.cit.} we conjecture that the symplectic dual to the Slodowy slice $S(e)$ for $e\in \OO^\vee\subset \fg^\vee$ is $\Spec(\CC[D(\OO^\vee)])$. For a nilpotent orbit $\OO_1\subset \overline{\OO}_2$ and a $G$-equivariant cover $\widehat{\OO}_2$ of $\OO_2$ let $S_{\OO_1}^{\widehat{\OO}_2}$ be a connected component of the preimage of $S_{\OO_1}^{\OO_2}$ under the map $\Spec(\CC[\widehat{\OO}_2])\to \overline{\OO}_2$.

        \begin{expect}\label{correct expect}
            Let $\OO^\vee\subset \cN^\vee$ and $\OO\subset \cN$ be special nilpotent orbits, such that $\OO^\vee\subset \overline{d(\OO)}$, and $\OO\subset \overline{d(\OO^\vee)}$. Then $S_{\OO^\vee}^{D(\OO^\vee)}$ and $S_{\OO}^{D(\OO^\vee)}$ are symplectic dual.
        \end{expect}

        Using the results of this paper, if $\OO_1\subset \overline{\OO}_2$ is of codimension $2$, we can describe $S_{\OO_1}^{\widehat{\OO}_2}$. Below we illustrate \cref{correct expect} with a diagram, analogous to the one from \cite{JLS}, where an edge between $\OO$ and $d(\OO^\vee)$ corresponds to $S_{\OO}^{D(\OO^\vee)}$. Crucially, we ignore the symmetry acting on the singularities in question. The role of the symmetry is a focus of the ongoing work with Shilin Yu.

           \begin{tikzpicture}
	\begin{scope}[shift={(-4,0)}]
		\matrix (B4special)
		[matrix of math nodes,
		nodes in empty cells,
		nodes={outer sep=0pt,minimum size=0pt},
		column sep={1.7cm,between origins},
		row sep={1.5cm,between origins}]
		{
			&&& | (a) | [9]  \\
			&&& | (b) | [7,1^2] \\
			&&& | (c) | [5,3,1]  \\
			&&& | (d) | [5,2^2]  \\
			&& | (e) | [5,1^4] &&  | (f) |  [3^3] \\
			&&&  | (g) | [3^2,1^3]  \\
			&&&  | (h) | [3,2^2,1^2] \\
			&&&  | (i) | [3,1^6] \\
			&&&  | (j) | [1^9]\\
		};

		\draw (a) -- (b) node[midway, right] {${\color{blue} A_7}$};
		\draw (b) -- (c) node[midway, right] {${\color{blue} A_3}$};
		\draw (c) -- (d) node[midway, right]{${\color{blue} A_1 \sqcup A_1}$};  
		\draw (d) -- (e) node[midway, left] {${\color{blue} A_1\sqcup A_1}$}; 
		\draw (d) -- (f) node[midway, right=1pt]{${\color{blue} A_1}$}; %above left=-3pt
		\draw (f) -- (g) node[midway,right=.5pt] {${\color{blue} A_1}$}; %above right=-3pt
		\draw (e) -- (g) node[midway, below left=-3pt] {${\color{blue}A_1}$};
		\draw (g) -- (h) node[midway, right] {${\color{blue}A_1}$};
		\draw (h) -- (i) node[midway, right] {${\color{red} d_3}$};  
		\draw (i) -- (j) node[midway, right] {${\color{green} d_5}$};
		
	\end{scope}
	
	\begin{scope}[shift={(4,0)}]
		\matrix (C4special)
		[matrix of math nodes,
		nodes in empty cells,
		nodes={outer sep=0pt,minimum size=0pt},
		column sep={1.7cm,between origins},
		row sep={1.5cm,between origins}]
		{
	       	&&&  | (j) | [1^8] \\
			&&&  | (i) |  [2^2,1^4] \\
			&&&  | (h) |  [2^4]  \\
			&&& | (g) | [3^2,1^2] \\
			&& | (e) | [4,2,1^2]   && | (f) | [3^2,2]   \\
			&&& | (d) | [4,2^2] \\
			&&& | (c) | [4^2] \\
			&&& | (b) |[6,2] \\
                &&& | (a) | [8]  \\
		};
		
		\draw (a) -- (b) node[midway, right] {${\color{blue}D_5}$};
		\draw (b) -- (c) node[midway, right] {${\color{blue}D_3}$};
		\draw (c) -- (d) node[midway, right]{${\color{blue} A_1}$};  
		\draw (d) -- (e) node[midway, left] {${\color{blue} A_1}$}; 
		\draw (d) -- (f) node[midway, right=1pt]{${\color{blue} A_1}$}; %above left=-3pt
		\draw (f) -- (g) node[midway,right=.5pt] {${\color{blue} A_1}$}; %above right=-3pt
		\draw (e) -- (g) node[midway, left=3pt] {${\color{blue} A_1 \sqcup A_1}$};
		\draw (g) -- (h) node[midway, right] {${\color{blue}A_1\sqcup A_1}$};
		\draw (h) -- (i) node[midway, right] {${\color{red}a_3}$};  
		\draw (i) -- (j) node[midway, right] {${\color{green}a_7}$};

		\end{scope}
\end{tikzpicture}

        There blue labels are obtained using the methods of this paper, and green labels correspond to the $2$-fold covers of minimal special orbits and follow from \cite{BrylinskiKostant1994}. Red labels can be obtained using the description in \cite{JLS} with extra work. Since it requires different methods to the ones used in this paper, we do not provide proof here.

	%			\section{Extending the quantization of an orbit to its affinization}\label{extend}
	%				In this section we want to examine when the quantization of a cover $\widehat{\OO}$ of an orbit $\OO$ can be extended to $\Spec(\CC[\widehat{\OO}])$.
	%			\section{Appendix}\label{appendix}
	%				Let 
	\printbibliography[title={References}]
	
\end{document}